\def\be#1\ee{\begin{equation}#1\end{equation}}
\newtheorem{thm}{Theorem}[section]
\newtheorem{lem}[thm]{Lemma}
\newtheorem{prop}[thm]{Proposition}
\newtheorem{rem}[thm]{Remark}
\newtheorem{defn}[thm]{Definition}
\DeclareMathOperator{\Var}{Var}
\def\P{{\mathbb{P}}}
\def\R{\mathbb{R}}
\def\E{\mathbb{E}\,}
\def\Q{\mathbb{Q}}
\def\N{{\mathbb N}}
\newenvironment{proof}[1][] {\noindent {\bf Proof#1:} }{\hspace*{\fill}$\square$\medskip\par}
\def\Beta{\text{{\rm Beta}}}
\def\cov{\textrm{cov}}
\def\corr{\textrm{corr}}
\newcommand{\eps}{\varepsilon}
\newcommand{\fa}{\textcolor{black}}
\newcommand{\minorchanges}{\textcolor{black}}
\def\NN{{\mathcal N}}
\def \=L{\ {\buildrel\hbox{\scriptsize d }\over =}\ }
\let\BFseries\bfseries\def\bfseries{\BFseries\mathversion{bold}} 
\begin{document}

\title{Persistence probabilities of weighted sums of stationary Gaussian sequences}

\author{Frank Aurzada  and Sumit Mukherjee\thanks{Research partially supported by NSF grant DMS-1712037}\\
Technical University of Darmstadt and Columbia University}

\maketitle

\begin{abstract}
With $\{\xi_i\}_{i\ge 0}$ being a centered stationary Gaussian sequence with non-negative correlation function $\rho(i):=\E[ \xi_0\xi_i]$ and  $\{\sigma(i)\}_{i\ge 1}$ a sequence of positive reals, we study the asymptotics of the persistence probability of the weighted sum $\sum_{i=1}^\ell \sigma(i) \xi_i$, $\ell\ge 1$. For summable correlations $\rho$, we show that the persistence exponent is universal.
On the contrary, for non-summable $\rho$, even for polynomial weight functions $\sigma(i)\sim i^p$ the persistence exponent depends on the rate of decay of the correlations (encoded by a parameter $H$) and on the polynomial rate $p$ of $\sigma$. In this case, we show existence of the persistence exponent $\theta(H,p)$ and study its properties as a function of $(p,H)$. During the course of our proofs, we develop several tools for dealing with exit problems for Gaussian processes with non-negative correlations -- e.g.\ a continuity result for persistence exponents and a necessary and sufficient criterion for the persistence exponent to be zero -- that might be of independent interest. 
\end{abstract}


\footnotesize{{\bf Keywords:}  first passage time;
Fractional Brownian Motion; Gaussian process; 

persistence probability; stationary process.}

\footnotesize{{\bf AMS 2020 subject classification:} Primary: 60G22, 60G15; Secondary: 60G10}

\section{Introduction}
The study of the tail behaviour of the first passage time of a random walk $\sum_{i=1}^n X_i$ with independent and identically distributed increments $\{X_i\}$ above (or below) a level $x\in\R$,
$$
\P( \max_{1\le k\le n} \sum_{i=1}^k X_i < x ), \qquad \text{as $n\to\infty$},
$$
is a classical topic in probability theory. This type of problems is studied both for discrete and continuous time \cite{bertoin,doney1,doney2,feller,sato}; and due to its fundamental nature it has numerous applications in finance, insurance, queueing, and other subjects. In a recent work, Denisov, Sakhanenko and Wachtel \cite{DSV} study the case when the increments $\{X_i\}$ are independent, but not necessarily identically distributed. When the random variables have finite variance, Denisov et al.\ show that the properly time-rescaled version of the corresponding random walk (with non-identically distributed increments) has the same tail behaviour of the first passage time as the classical (i.i.d.) random walk. A particular case is when $X_n=\sigma(n) \xi_n$ with i.i.d.\ $\{\xi_n\}$ with $\E \xi_1 = 0$, where one has (cf.\ Theorem~2 in \cite{DSV})
\begin{equation} \label{eqn:dsv-inde}
\P( \max_{1\le k\le n} \sum_{i=1}^k \sigma(i)\xi_i < x ) \sim \sqrt{\frac{2}{\pi}} \, \frac{x}{s(n)},
\end{equation}
where $s^2(n):=\sum_{i=1}^n \sigma^2(n)$.

The purpose of the present paper is to study the corresponding result in the case of {\em correlated} Gaussian random variables. The picture is much more diverse, and as we shall see, the type of behaviour of the first passage time (\ref{eqn:dsv-inde}) strongly depends on the correlations as well as the weights.

To fix the notation for this paper, let $\{\xi_i\}_{i\ge 0}$ be a centered stationary Gaussian sequence with $\E \xi_i^2 = 1$ and  non-negative correlation function $\rho(i):=\E \xi_0\xi_i$. Let $\{\sigma(i)\}_{i\ge 1}$ be a sequence of positive real numbers.
Define a centered Gaussian process $\{S_\ell\}_{\ell\ge 0}$ by setting 
$$
S_\ell:=\sum_{i=1}^\ell \sigma(i)\xi_i,\qquad \ell\ge 0.
$$

In this paper, we are interested in studying the asymptotics of the persistence probability for the sequence $\{S_\ell\}_{\ell\ge 0}$, defined by
$$
q_n:=\P(\max_{1\le \ell\le n}S_\ell<0),\qquad \text{as $n\to\infty$},
$$
for a wide class of choices of $\sigma(.)$ and $\rho(.)$. 

We will find that for summable $\rho$ the persistence probability is universally determined only by $\sigma$ via the function $s^2(n):=\sum_{i=1}^n \minorchanges{\sigma^2(i)}$ and the behaviour resembles the independent case (\ref{eqn:dsv-inde}). Contrary to this, for non-summable correlations $\rho$, the picture is significantly richer. Here, we study the case of polynomial weights $\sigma(i)\sim i^p$ and polynomial correlations $\rho(i)\sim \kappa i^{2H-2}$ with $H\in(1/2,1)$ and $H+p>1/2$. In this case, we show the existence of the persistence exponent $\theta(H,p)$ and its limiting behaviour when $p$ or/and $H$ approach the natural boundaries of their ranges.

Let us briefly comment on the related literature. In the case of i.i.d.\ random walks, there is a huge literature on first passage times, both classical (see already  \cite{feller}) and recent (see \cite{AS} for a review). In the independent, but non-identically distributed case, an early work is \cite{AB}, before \cite{DSV} gave an essentially complete solution to the problem. In the correlated but unweighted case ($\sigma\equiv 1$), one has to mention the works \cite{abuck,agpp,luysivakoff}. To the knowledge of the authors, no results seem available in the literature when the increments are correlated and weighted random variables.
For general background on persistence problems and their significance in theoretical physics, we refer to the surveys \cite{majumdar2,majumdar1,SCDC} (for a theoretical physics point of view) and 
\cite{AS} (for a review of the mathematical literature).

This paper is structured as follows. In Section~\ref{sec:mainresults}, we state our main results, distinguishing the case of summable and non-summable correlations. Section~\ref{sec:tools} is devoted to a few tools for non-exit problems for stationary Gaussian processes with non-negative correlations that might be of independent interest.  The proof of the universal result in the summable case is given in Section~\ref{sec:proofofsummable}, while the proofs for the results in the non-summable case are given in Section~\ref{sec:proofnonsummable}. Finally, Section~\ref{sec:proofofgeneraltools} contains the proofs of the general tools from  Section~\ref{sec:tools}.
\\
\\

\section{Main results} \label{sec:mainresults}
\subsection{A universal result for summable correlations}
We first study the case when the correlation $\rho(.)$ is summable, i.e.
\begin{align}\label{eq:summable}
\sum_{i=0}^\infty \rho(i)<\infty.
\end{align}
We will also make the following three assumptions on the sequence of weights $\sigma(.)$:
\begin{align}\label{eq:sigma2}
\lim_{n\to\infty}s(n)=\infty,
\end{align}
\begin{align}\label{eq:sigma}
\lim_{n\to\infty}\frac{\sigma(n+\ell)}{\sigma(n)}=1,\qquad \text{for all $\ell\ge 1$},
\end{align}
and 
there exists a $C<\infty$ depending on $\sigma$ such that for all $m\ge n$ we have 
\begin{align}\label{eq:log_concave}
\frac{\sigma(m)}{s(m)}\le C \frac{\sigma(n)}{s(n)}.
\end{align}

Then we can formulate the main result in the summable case.
\begin{thm}\label{thm:summble}
Let $\sigma(.)$ satisfy \eqref{eq:sigma2}, \eqref{eq:sigma}, and \eqref{eq:log_concave}. Then for any correlation function $\rho(.)$ satisfying \eqref{eq:summable} we have
 $$ \lim_{n\to\infty}\frac{\log q_n}{\log s(n)}=-1.$$
\end{thm}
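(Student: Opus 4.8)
My plan is to sandwich $q_n$ between the persistence probabilities of two independent-increment (Markovian) Gaussian reference processes and then read off the common exponent from the independent-increment asymptotics \eqref{eqn:dsv-inde}. The scheme rests on the observation that, because $\rho$ is summable and $\sigma$ is slowly varying by \eqref{eq:sigma}, the normalized covariance structure of $\{S_\ell\}$ agrees to leading order with that of a random walk: one expects $\Var(S_\ell)\asymp s(\ell)^2$ and $\corr(S_k,S_\ell)\approx s(k)/s(\ell)$ for $k\le \ell$, which is exactly the correlation structure of a Gaussian process with independent increments and variance profile $s(\cdot)^2$. Since the thresholds in $q_n=\P(S_\ell<0,\ 1\le \ell\le n)$ are all $0$, the comparison is scale invariant and reduces, after normalizing each $S_\ell$ to unit variance, to a pure comparison of correlations through Slepian's inequality.

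As a preliminary step I would record the covariance estimates. Writing, for $k\le \ell$, $\cov(S_k,S_\ell)=\Var(S_k)+c_{k,\ell}$ with $c_{k,\ell}:=\cov(S_k,S_\ell-S_k)=\sum_{i\le k}\sum_{k<j\le \ell}\sigma(i)\sigma(j)\rho(j-i)\ge 0$, nonnegativity of $\rho$ immediately gives $\cov(S_k,S_\ell)\ge \Var(S_k)$ and $\Var(S_\ell)\ge s(\ell)^2$, while summability of $\rho$ together with $\sigma(i)\sigma(j)\le \tfrac12(\sigma(i)^2+\sigma(j)^2)$ yields the matching upper bound $\Var(S_\ell)\le C s(\ell)^2$. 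The monotonicity assumption \eqref{eq:log_concave} is what makes these estimates uniform: it prevents a single late increment from dominating the running standard deviation, so that the off-diagonal perturbations only change the relevant scale by a sub-polynomial factor $s(n)^{o(1)}$.

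For the lower bound the argument is clean. I introduce the Gaussian process $\tilde S$ with independent increments and matched variances $\Var(\tilde S_\ell)=\Var(S_\ell)$; this is well defined since $\Var(S_\ell)-\Var(S_{\ell-1})\ge \sigma(\ell)^2>0$. With matched variances, $\E[\tilde S_k\tilde S_\ell]=\Var(S_k)\le \Var(S_k)+c_{k,\ell}=\E[S_kS_\ell]$ for $k\le \ell$, so Slepian's inequality gives $q_n\ge \P(\tilde S_\ell<0,\ 1\le \ell\le n)$. The increment standard deviations $\sigma'(\ell):=\sqrt{\Var(S_\ell)-\Var(S_{\ell-1})}$ inherit \eqref{eq:sigma2}--\eqref{eq:log_concave} from $\sigma$ (here \eqref{eq:sigma} and summability are used to show $\sigma'(\ell)\asymp \sigma(\ell)$ and $s'(n)^2=\Var(S_n)\asymp s(n)^2$), so \eqref{eqn:dsv-inde} applies to $\tilde S$ and yields $\P(\tilde S_\ell<0,\ \forall \ell\le n)\sim c/s'(n)\asymp 1/s(n)$. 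Hence $\liminf_n \log q_n/\log s(n)\ge -1$.

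The upper bound is the main obstacle, because positive correlations can only increase persistence, so the variance-matched reference now points the wrong way and cannot bound $q_n$ from above. What is needed instead is an independent-increment reference $Y$ that is \emph{more} correlated than $\hat S_\ell:=S_\ell/\sqrt{\Var(S_\ell)}$, i.e.\ a Markov correlation $g(k)/g(\ell)\ge \corr(S_k,S_\ell)$ for all $k\le \ell$ with $g(n)=s(n)^{1-o(1)}$; Slepian then gives $q_n\le \P(Y_\ell<0,\ \forall \ell\le n)\sim c/g(n)=s(n)^{-1+o(1)}$. Constructing such a $g$ amounts to showing that the correlations of $\hat S$ approximately factorize, i.e.\ satisfy $\corr(S_k,S_\ell)\le \corr(S_k,S_m)\,\corr(S_m,S_\ell)$ up to a sub-polynomial error, and then defining $g$ through the consecutive ratios $\corr(S_{\ell-1},S_\ell)$. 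The crux is to bound the cross-covariances $c_{k,\ell}$ uniformly in $k\le \ell\le n$ using only summability of $\rho$ and \eqref{eq:log_concave}, and to show that the accumulated deviation from exact factorization inflates the scale by at most $s(n)^{o(1)}$; this is exactly the kind of estimate for which the general tools of Section~\ref{sec:tools} are designed. Combining the two bounds gives $\lim_n \log q_n/\log s(n)=-1$.
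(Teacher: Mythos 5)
Your lower bound is essentially sound: matching variances and applying Slepian against the independent-increment process $\tilde S$ is a valid comparison, and the appeal to \eqref{eqn:dsv-inde} can be made rigorous (the level $0$ issue is handled by conditioning on $\{\tilde S_1<-1\}$ and applying the result at level $1$). The upper bound, however, contains a genuine gap, and it is not a repairable technicality: the key claim --- that $\corr(S_k,S_\ell)$ is dominated, up to an $s(n)^{o(1)}$ error, by a Markov correlation $g(k)/g(\ell)$ with $g(n)=s(n)^{1-o(1)}$ --- is false under the hypotheses of the theorem. The assumptions allow $\sigma$ to grow stretched-exponentially, e.g.\ $\sigma(i)=e^{\sqrt i}$ (this is the paper's Example (iv), and it satisfies \eqref{eq:sigma2}, \eqref{eq:sigma}, \eqref{eq:log_concave}, with $s(k)^2\sim \sqrt k\, e^{2\sqrt k}$), and they allow $\rho$ to be summable with a polynomially heavy tail, e.g.\ $\rho(\tau)\asymp \tau^{-1}(\log\tau)^{-2}$ with $\sum_{\tau\ge1}\rho(\tau)<1/2$ (nonnegative definite because the spectral density is bounded below by $1-2\sum_{\tau\ge1}\rho(\tau)>0$). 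For this pair, keeping only the single nonnegative term $(i,j)=(k,2k)$ in the covariance sum and using $\Var(S_\ell)\le (1+2\sum_{\tau\ge 1}\rho(\tau))\,s(\ell)^2$ gives
\begin{equation*}
\corr(S_k,S_{2k})\;\ge\; \frac{\sigma(k)\sigma(2k)\rho(k)}{\bigl(1+2\sum_{\tau\ge1}\rho(\tau)\bigr)\,s(k)s(2k)}\;\gtrsim\; k^{-3/2}(\log k)^{-2},
\end{equation*}
since $\sigma(k)/s(k)\asymp k^{-1/4}$. So the correlation between $S_k$ and $S_{2k}$ decays only polynomially in $k$, whereas the Markov benchmark $s(k)/s(2k)\asymp e^{-(\sqrt2-1)\sqrt k}$ is stretched-exponentially small; your heuristic $\corr(S_k,S_\ell)\approx s(k)/s(\ell)$ fails by a factor which is a genuine power of $s(k)$, not $s(n)^{o(1)}$. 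Consequently, any nondecreasing $g$ with $g(k)/g(2k)\ge \corr(S_k,S_{2k})$ must satisfy $g(2k)\le k^2 g(k)$ for large $k$, hence $g(n)\le e^{O((\log n)^2)}=s(n)^{o(1)}$. Since the persistence probability of the dominating Markov process is at least of order $g(1)/g(n)\ge e^{-O((\log n)^2)}$, the best bound this route can ever produce is $q_n\le e^{-O((\log n)^2)}$, which gives $\log q_n/\log s(n)\to 0$ rather than $-1$. In short, under mere summability of $\rho$ and \eqref{eq:sigma}, the process $\{S_\ell\}$ can be far ``more correlated than Markov'': cross terms $\sigma(i)\sigma(j)\rho(j-i)$ with $j-i$ large dominate, because $\sigma(j)/\sigma(i)$ may be super-polynomially large while $\rho(j-i)$ is only polynomially small.

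This is precisely why the paper does not attempt a Slepian comparison against a fixed reference process for the upper bound. Instead it normalizes and exponentially time-changes, $X_k(t)=k^{-1}X(w(ke^t))$, shows via Lemma \ref{lem:sumlim1} that the correlations converge to the Ornstein--Uhlenbeck correlation $e^{-|s-t|}$ (Remark \ref{rem:orn}), and then invokes the continuity result Lemma \ref{lem:DM2}, whose proof works by blocking, the Gaussian interpolation estimate of Lemma \ref{lem:op}, and a white-noise perturbation --- none of which requires a dominating Markov envelope. The heavy long-range cross terms exhibited above are exactly what the summability-type condition \eqref{eq:DM1} tolerates and what Lemma \ref{lem:2020_1} controls (via the splitting $\rho=\tilde\rho h$ and assumption \eqref{eq:log_concave}) in an integrated sense; indeed the paper remarks that the simpler pointwise decay conditions of Remark \ref{rem:DM1} are insufficient for Theorem \ref{thm:summble} in this generality. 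If you want to salvage a comparison-based proof, the comparison must be done blockwise on the scale of the OU time change, which is in effect what Lemma \ref{lem:DM2d} does.
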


Note that the order of log persistence only depends on $\sigma$ (through the function $s(.)$), and is hence independent of $\rho(.)$. 
The reason for the universality in the above theorem is that the limiting process which governs the exponent always turns out to be the Ornstein-Uhlenbeck process.

Below we give a list of common choices of weight functions for which Theorem \ref{thm:summble} holds. We use the notation $f(i)\sim g(i)$ to denote
$\lim_{i\to\infty} f(i)/g(i)=1$. In all cases, \eqref{eq:sigma2}, \eqref{eq:sigma}, and \eqref{eq:log_concave} can be checked relatively easily, possibly using \cite{BGT}.

\paragraph{Examples.}
{\bf (i)}
Suppose 
$\sigma(i)\sim i^p$
where $p>-1/2$, where one has $s^2(n)\sim n^{2p+1} /(2p+1)$, so that Theorem~\ref{thm:summble} gives
\begin{equation} \label{eqn:savealine02-17}
\lim_{n\to\infty}\frac{\log q_n}{\log n}=-\Big(p+\frac{1}{2}\Big).
\end{equation}

{\bf (ii)}
More generally, suppose $\sigma(.)$ is a regularly varying function of order $p>-1/2$. In this case one shows that $s^2(n)\sim n\sigma^2(n) /(2p+1)$, so that Theorem~\ref{thm:summble} gives again (\ref{eqn:savealine02-17}).

{\bf (iii)}
Suppose $\sigma(i)\sim  i^{-1/2}$ so that $s^2(n)\sim \log n$ and by Theorem~\ref{thm:summble}
$$
\lim_{n\to\infty}\frac{\log q_n}{\log \log n}=-\frac{1}{2}.
$$

{\bf (iv)}
Suppose $\sigma(i)\sim e^{\gamma i^p}$ for some $p\in (0,1)$ and $\gamma>0$. In this case $s^2(n)\sim  n^{1-p}\sigma^2(n) /(2\gamma p)$ and
Theorem~\ref{thm:summble} then gives
$$
\lim_{n\to\infty}\frac{\log q_n}{ n^p}=-\gamma.
$$

We now turn our attention to the assumptions on the weight sequence $\sigma(.)$ made in Theorem \ref{thm:summble}. Our first  proposition shows that without \eqref{eq:sigma2} the conclusion of Theorem \ref{thm:summble} does not hold. In this case the persistence probability does not go to $0$ with $n$, but instead converges to some number between $(0,1)$, which is not universal and depends both on $\rho(.)$ and $\sigma(.)$.

\begin{prop}\label{ppn:easy}
If $\lim_{n\to\infty}s^2(n)=\sum_{i=1}^\infty\minorchanges{\sigma^2(i)}<\infty$ $($i.e. \eqref{eq:sigma2} does not hold$)$, then for  $\rho(.)$ satisfying \eqref{eq:summable} we have $\lim_{n\to\infty}q_n=q$, for some $q\in (0,1)$, where $q$ depends on both $\rho$ and $\sigma$.
\end{prop}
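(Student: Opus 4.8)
The plan is to separate the two easy facts from the genuine content, which is the strict positivity $q>0$. Since the event $\{\max_{1\le\ell\le n}S_\ell<0\}$ shrinks as $n$ grows, $q_n$ is non-increasing, so $q:=\lim_n q_n$ exists and equals $\P(S_\ell<0\text{ for all }\ell\ge1)$. The upper bound is immediate, $q\le q_1=\P(\sigma(1)\xi_1<0)=\tfrac12<1$. The summability \eqref{eq:summable} together with $\sum_i\sigma(i)^2<\infty$ makes $S_\ell$ Cauchy in $L^2$: for $m>n$, $\E[(S_m-S_n)^2]=\sum_{n<i,j\le m}\sigma(i)\sigma(j)\rho(|i-j|)\le 2R\sum_{i>n}\sigma(i)^2\to0$, where $R:=\sum_{k\ge0}\rho(k)$. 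Hence $S_\ell\to S_\infty:=\sum_{i\ge1}\sigma(i)\xi_i$ in $L^2$, a centered Gaussian with $v^2:=\Var(S_\infty)\in(0,\infty)$. Writing $S_\infty=\sigma(1)\xi_1+T_1$ with $T_1:=\sum_{i\ge2}\sigma(i)\xi_i$ and using $\rho\ge0$, $\sigma>0$, one gets the strict gap $v^2\ge\sigma(1)^2+\Var(T_1)$, which will be decisive.

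For the lower bound I would compare the whole trajectory to its limit. Set $W_\ell:=S_\ell-S_\infty$ and $V:=\sup_{\ell\ge1}W_\ell$. If $S_\infty<-c$ and $V<c$ for some $c>0$, then $S_\ell=S_\infty+W_\ell\le S_\infty+V<0$ for every $\ell$, so $\{S_\infty<-c\}\cap\{V<c\}\subseteq\{S_\ell<0\ \forall\ell\}$, and therefore
$$q\ge \P(S_\infty<-c)-\P(V\ge c)\qquad\text{for every }c>0.$$
Everything reduces to showing that the first (Gaussian) tail dominates the second for large $c$. The term $\P(S_\infty<-c)$ is a plain Gaussian lower tail, which by the Mills ratio estimate is $\ge C c^{-1}e^{-c^2/(2v^2)}$.

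The real obstacle, and the part I expect to require the most care, is controlling $V=\sup_\ell W_\ell$, the supremum of the centered Gaussian process $\{W_\ell\}_{\ell\ge1}$. Here both summability hypotheses combine. The canonical distance obeys $\E[(W_\ell-W_{\ell'})^2]=\E[(S_\ell-S_{\ell'})^2]\le 2R\sum_{\ell'<i\le\ell}\sigma(i)^2$, so, writing $\Psi(\ell):=\sum_{i\le\ell}\sigma(i)^2\le\Psi(\infty)<\infty$, the index set embeds (up to the constant $\sqrt{2R}$) into the interval $[0,\Psi(\infty)]$ equipped with the square-root metric $|u-u'|^{1/2}$, whose metric entropy is integrable. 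Dudley's entropy bound then yields $\E V<\infty$, and in particular $V<\infty$ a.s.; equivalently one may first argue that $S_\ell$ converges almost surely, so that $W_\ell\to0$ and the sequence is bounded. Since $\sup_{\ell\ge1}\Var(W_\ell)=\Var(T_1)=:\varsigma^2$ is finite, the Borell--TIS inequality gives $\P(V\ge c)\le e^{-(c-\E V)^2/(2\varsigma^2)}$.

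To close, note that the strict gap gives $\varsigma^2=\Var(T_1)<v^2$, so the exponent $\tfrac{(c-\E V)^2}{2\varsigma^2}-\tfrac{c^2}{2v^2}$ tends to $+\infty$; consequently $\P(S_\infty<-c)/\P(V\ge c)\to\infty$, and choosing $c$ large enough makes $q\ge\P(S_\infty<-c)-\P(V\ge c)>0$. Together with $q\le\tfrac12$ this yields $q\in(0,1)$. Finally, since $q=\P(S_\ell<0\ \forall\ell)$ is a functional of the joint law of $\{S_\ell\}$, which is determined by both $\sigma$ and $\rho$, the limit is genuinely non-universal; this can be confirmed on elementary examples (e.g.\ comparing different $\rho$ for a fixed finitely supported $\sigma$), completing the claim that $q$ depends on both $\sigma$ and $\rho$.
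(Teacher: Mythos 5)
Your proof is correct, and it takes a genuinely different route from the paper's. The paper also reduces to showing $\liminf_n q_n>0$ and also compares the trajectory to $S_\infty$, but it does so by a time-splitting argument: it first establishes almost sure convergence $S_k\to S_\infty$, picks $K$ so large that $\P(\sup_{k\ge K}S_k<0)\ge \P(S_\infty<-1)/2$, and then glues the initial segment $\{1,\dots,K-1\}$ (handled by Slepian's lemma, costing a factor $2^{-K}$) to the tail. Your argument is uniform in time: from the single inclusion $\{S_\infty<-c\}\cap\{\sup_\ell W_\ell<c\}\subseteq\{S_\ell<0\ \forall\ell\}$ you conclude via Dudley's entropy bound, Borell--TIS, and the variance gap $\Var(T_1)<\Var(S_\infty)$ (which uses $\rho\ge0$, $\sigma(1)>0$), with no need for Slepian or for a.s.\ convergence. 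Each approach has merits: the paper's is softer, requiring no quantitative tail estimates once a.s.\ convergence is granted; yours is more quantitative (it yields an explicit lower bound on $q$) and, notably, it sidesteps a weak point of the paper's proof -- the paper justifies a.s.\ convergence by asserting that $\{S_k\}$ is a martingale with respect to $\mathcal{F}_k=\sigma(\xi_1,\dots,\xi_k)$, which is false in general for correlated $\xi_i$ (one has $\E[\xi_{k+1}\mid\mathcal{F}_k]\neq0$ unless the relevant correlations vanish), whereas your Dudley/Borell--TIS control of $\sup_\ell W_\ell$ needs only the $L^2$ structure and non-negativity of $\rho$. Two small polish points: state explicitly that $\sup_\ell\Var(W_\ell)=\Var(T_1)$ because $\Var(W_\ell)=\sum_{i,j>\ell}\sigma(i)\sigma(j)\rho(i-j)$ is a sum of non-negative terms and hence non-increasing in $\ell$; and, as with the paper, the final claim that $q$ ``depends on both $\rho$ and $\sigma$'' is only illustrated rather than proved, which is acceptable since the paper does no more.
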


Our second proposition shows that \eqref{eq:sigma} is necessary for Theorem \ref{thm:summble}, as can be seen from a counterexample with exponential weights. In this case the limiting exponent depends on both $\sigma$ and $\rho$.
\begin{prop}\label{ppn:exp}
Suppose $\sigma(i)=e^{\alpha i}$ for some $\alpha>0$, and let $\rho(.)$ satisfy \eqref{eq:summable}. Then we have 
$\lim\limits_{n\to\infty}\frac{\log q_n}{\log n}=-\theta_d(D_{\alpha,\rho})$, where
$$
\theta_d(D_{\alpha,\rho}):=-\lim_{n\to\infty}\frac{1}{n}\log \P(\max_{1\le \ell\le n}Z_\alpha(\ell)<0)\in (0,\infty),
$$
where $\{Z_\alpha(\ell)\}_{\ell\ge 1}$ is a discrete time centered stationary Gaussian sequence  with correlation function 
$$
D_{\alpha,\rho}(\tau):=\frac{\sum_{i,j=0}^\infty e^{-(i+j)\alpha}\rho(i-j-\tau)}{\sum_{i,j=0}^\infty e^{-(i+j)\alpha}\rho(i-j)}.
$$
\end{prop}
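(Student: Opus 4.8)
The plan is to reduce the persistence problem for $\{S_\ell\}$ to one for an \emph{asymptotically stationary} sequence whose limiting correlation function is exactly $D_{\alpha,\rho}$, and then to transfer the persistence exponent from the stationary limit $Z_\alpha$ using the comparison and continuity tools for non-negatively correlated Gaussian processes developed in Section~\ref{sec:tools}. First I would exploit that $\sigma(i)=e^{\alpha i}>0$ and that the event $\{S_\ell<0\}$ is invariant under multiplication by a positive constant. Writing $S_\ell=e^{\alpha\ell}W_\ell$ with
\[
W_\ell:=\sum_{a=0}^{\ell-1}e^{-\alpha a}\,\xi_{\ell-a}=\sum_{k=1}^\ell e^{-\alpha(\ell-k)}\xi_k ,
\]
we obtain $q_n=\P(\max_{1\le\ell\le n}W_\ell<0)$, and $W_\ell=e^{-\alpha}W_{\ell-1}+\xi_\ell$ is an autoregressive-type recursion. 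The point of this reduction is that, unlike $S_\ell$, the normalized sequence $\hat W_\ell:=W_\ell/\sqrt{\Var(W_\ell)}$ is asymptotically stationary with the right limiting correlations.

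The second step verifies this convergence of the correlation structure. A direct computation gives $\Var(W_\ell)=\sum_{a,b=0}^{\ell-1}e^{-\alpha(a+b)}\rho(a-b)\to c_{\alpha,\rho}:=\sum_{a,b=0}^\infty e^{-\alpha(a+b)}\rho(a-b)\in(0,\infty)$, the series being finite because the exponential damping dominates the bounded factor $\rho$ (so that summability \eqref{eq:summable} is not even needed here) and positive as a limiting variance. Likewise, for fixed $\tau\ge 0$, $\cov(W_\ell,W_{\ell+\tau})=\sum_{a,b=0}^{\ell-1,\ell+\tau-1}e^{-\alpha(a+b)}\rho(b-a-\tau)$ converges as $\ell\to\infty$ to the numerator of $D_{\alpha,\rho}(\tau)$ (after relabelling $a\leftrightarrow b$ and using $\rho(-k)=\rho(k)$), whence $\corr(W_\ell,W_{\ell+\tau})\to D_{\alpha,\rho}(\tau)$. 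Thus the limiting correlation function of $\hat W$ is precisely that of the stationary sequence $Z_\alpha$. Summability of $\rho$ enters only now, to guarantee that $D_{\alpha,\rho}$ is itself summable, which I would use in the last step.

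The third step transfers the exponent. Since $\rho\ge 0$ and all weights are positive, every correlation of $\{W_\ell\}$ is non-negative, so the tools of Section~\ref{sec:tools} apply. I would sandwich $q_n$ between persistence probabilities of genuinely stationary Gaussian sequences whose correlations dominate, respectively are dominated by, $D_{\alpha,\rho}$ up to an $\eps$, using Slepian's inequality, while absorbing the finitely many boundary indices (where $\hat W$ differs appreciably from stationarity) into a positive constant factor via the FKG splitting $\P(\bigcap_{\ell\le n})\ge\P(\bigcap_{\ell\le L})\P(\bigcap_{L<\ell\le n})$; the continuity result for persistence exponents then lets $\eps\to 0$. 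This identifies the exponential decay rate of $q_n$ with that of $Z_\alpha$, namely $\theta_d(D_{\alpha,\rho})$. Existence of the limit defining $\theta_d$ follows from super-multiplicativity of $\ell\mapsto\P(\max_{1\le j\le\ell}Z_\alpha(j)<0)$ (a consequence of the FKG/Slepian inequality for non-negatively correlated Gaussian vectors), finiteness $\theta_d\le\log 2$ from the crude Slepian bound $\P(\max_{1\le j\le\ell}Z_\alpha(j)<0)\ge 2^{-\ell}$, and strict positivity $\theta_d>0$ from the summability of $D_{\alpha,\rho}$ via the necessary-and-sufficient criterion of Section~\ref{sec:tools} for the exponent to vanish.

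The main obstacle I anticipate is precisely the non-stationarity in the third step: the correlation $\corr(W_\ell,W_m)$ genuinely depends on the pair $(\ell,m)$ rather than only on $|\ell-m|$, and the convergence to $D_{\alpha,\rho}$ is not uniform near the time origin, so some care is required to show that the early, non-stationary coordinates alter $q_n$ by at most a sub-exponential factor and hence leave the rate unchanged. Combining the matching exponential upper and lower bounds yields the persistence exponent $\theta_d(D_{\alpha,\rho})$ for $q_n$, which establishes the proposition. Finally, since $s(n)^2=\sum_{i=1}^n e^{2\alpha i}\sim e^{2\alpha(n+1)}/(e^{2\alpha}-1)$ gives $\log s(n)\sim\alpha n$, the exponent obtained here differs in general from the universal value $-1$ of Theorem~\ref{thm:summble}, confirming that assumption \eqref{eq:sigma} cannot be dropped.
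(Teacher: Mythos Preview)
Your proposal is correct and follows essentially the same route as the paper: normalize $S_\ell$ to an asymptotically stationary sequence, identify the limiting correlation as $D_{\alpha,\rho}$, and then transfer the exponent using the tools of Section~\ref{sec:tools}. The paper carries out your third step by invoking Lemma~\ref{lem:DM2d} directly---its uniform-convergence hypothesis (verified there as $\sup_{\ell\ge 1,\tau\ge 0}\big|\corr(S_{\ell+k},S_{\ell+\tau+k})/D_{\alpha,\rho}(\tau)-1\big|\to 0$ as $k\to\infty$) absorbs precisely the non-stationarity obstacle you anticipate, and its summability condition~\eqref{eq:DM1d} follows from the bound $\corr(S_{\ell+k},S_{\ell+\tau+k})\lesssim_{\alpha,\rho}D_{\alpha,\rho}(\tau)$ together with summability of $D_{\alpha,\rho}$.
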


\begin{rem}
 It is unclear whether condition \eqref{eq:log_concave} of Theorem \ref{thm:summble}  is actually necessary or whether it is an artifact of our proof technique. We note that assumptions \eqref{eq:sigma2} and \eqref{eq:sigma} already imply that the sequence $\{\frac{\sigma(n)}{s(n)}\}$ converges to $0$; and assumption \eqref{eq:log_concave} demands that this sequence is eventually non-increasing up to a universal constant. Besides being a natural regularity condition, \eqref{eq:log_concave} does hold for a lot of natural choices of weight functions $\sigma$, as can be seen from the above examples.
\end{rem}

\subsection{The non-summable case}
We will now study the case when the correlation function $\rho(.)$ is  not summable.  In particular, we
will assume that there exist $\kappa>0$ and $H\in (1/2,1)$ such that 
\begin{align}
\label{eq:rho}\lim_{i\to\infty}\frac{\rho(i)}{i^{2H-2}}=\kappa.
\end{align}
Note that \eqref{eq:rho} implies that $\rho$ is not summable. In this case, the persistence exponent is no longer governed by the Ornstein-Uhlenbeck process as in Theorem \ref{thm:summble}, but instead heavily depends on the choice of $\rho$ and $\sigma$. We will demonstrate this by assuming that $\sigma$ satisfies
\begin{align}
\label{eq:sigma_p}\lim_{i\to\infty}\frac{\sigma(i)}{i^p}=1,
\end{align}
and showing that the persistence exponent depends on both $p$ and $H$.  To introduce the limiting process, we need the following definition.
\begin{defn} \label{def:smallf}
For $H\in (1/2,1)$,  $p+H>0$ define the function $f_{p,H}:(0,\infty)^2\mapsto (0,\infty)$ by setting $$f_{p,H}(a,b):=\int_0^a \int_0^b x^p y^p |x-y|^{2H-2}dxdy.$$ 
\end{defn}
The function $f_{p,H}(.)$ is well defined for $(p,H)$ with $p+H>0$ because
$$f_{p,H}(a,b)\le f_{p,H}(\max(a,b),\max(a,b))=\max(a,b)^{2p+2H}f_{p,H}(1,1)<\infty,$$
by Selberg's integral formula (cf.\ \cite[(1.2)]{FW}).
\begin{defn}
Define the correlation function of a stationary process on $[0,\infty)$ by setting
\begin{equation}
C_{p,H}(\tau):=e^{-\tau(p+H)}\frac{f_{p,H}(1,e^\tau)}{f_{p,H}(1,1)} .
\label{eqn:covnewprocess}
\end{equation}
\end{defn}
\begin{rem}
For a fractional Brownian Motion $B^H$, define:
$$
Z(\tau) = e^{-(p+H)\tau} \int_0^{e^\tau} x^{p} d B^H(x),\quad \tau\in \R,$$
cf.\ \cite{mishura} for the definition of a stochastic integral w.r.t.\ FBM. Then it can be checked that the function given in \eqref{eqn:covnewprocess} is the correlation function of $Z$. In particular, this shows that $C_{p,H}$ is indeed a correlation function.
\end{rem}

\begin{defn}
Given a non-negative correlation function $A(.)$ on $[0,\infty)$, let $\{Z(t),t\ge 0\}$ be a centered stationary Gaussian process with $A(.)$ as its correlation function, and let $\theta(A)\in [0,\infty]$ be defined as
$$\theta(A):=-\lim_{T\to\infty}\frac{1}{T}\log \P(\sup_{t\in [0,T]}Z(t)<0),$$
where the existence of the limit follows by Slepian's lemma and subadditivity. 
\end{defn}

We can now formulate the second main result of this paper, which handles the non-summable case.

\begin{thm}\label{thm:1}
Let  $\sigma(.)$ and $\rho(.)$ satisfy \eqref{eq:rho} and \eqref{eq:sigma_p}, respectively, for some  $\kappa>0$, and $(p,H)$ such that $ H\in (1/2,1)$, $p+H>0$. Then we have $$\lim_{n\to\infty}\frac{\log q_n}{\log n}=-\theta(C_{p,H}).$$
where $C_{p,H}$ is as in \eqref{eqn:covnewprocess}. Further, the exponent $\theta(p,H):=\theta(C_{p,H})$ lies in $(0,\infty)$.

\end{thm}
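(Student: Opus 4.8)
The plan is to transfer the discrete persistence problem for $\{S_\ell\}$ into the continuous-time persistence problem for the stationary process $Z$ with correlation $C_{p,H}$ via a logarithmic time change, and then invoke the continuity and positivity tools for persistence exponents developed in Section~\ref{sec:tools}. The starting point is the second-order structure of $\{S_\ell\}$. Writing $\E[S_m S_n]=\sum_{i=1}^m\sum_{j=1}^n\sigma(i)\sigma(j)\rho(i-j)$ and inserting $\sigma(i)\sim i^p$, $\rho(i)\sim\kappa i^{2H-2}$, a Riemann-sum argument should give $\E[S_mS_n]\sim\kappa\, f_{p,H}(m,n)$. The diagonal singularity $|x-y|^{2H-2}$ is integrable since $2H-2>-1$, and the diagonal contribution $\sum_i\sigma(i)^2\rho(0)$ is of order $m^{\max(2p+1,0)}$ up to logarithmic factors, hence $o(m^{2p+2H})$ because $H>1/2$ and $p+H>0$; so the double sum is genuinely governed by the long-range part. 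In particular $\Var(S_\ell)\sim\kappa\, f_{p,H}(1,1)\,\ell^{2p+2H}$, and since $\tilde S_\ell:=S_\ell/\sqrt{\Var(S_\ell)}$ has the same sign as $S_\ell$, it has the same persistence probability.

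Next I would exploit the homogeneity $f_{p,H}(\lambda a,\lambda b)=\lambda^{2p+2H}f_{p,H}(a,b)$. Setting $m=\lfloor e^s\rfloor$ and $n=\lfloor e^t\rfloor$ with $0\le s\le t$ and combining with the covariance asymptotics, the correlation of the normalized sequence satisfies $\E[\tilde S_m\tilde S_n]\to e^{-(t-s)(p+H)}f_{p,H}(1,e^{t-s})/f_{p,H}(1,1)=C_{p,H}(t-s)$, uniformly for $s,t$ in compact sets. Thus, after the logarithmic time change, the normalized process converges (in the sense of its finite-dimensional correlations) to the stationary process $Z$ that governs $\theta(C_{p,H})$.

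I would then conclude $\log q_n/\log n\to-\theta(C_{p,H})$ by invoking the continuity result for persistence exponents from Section~\ref{sec:tools}: sandwich the true correlations between slightly perturbed stationary ones and apply Slepian's lemma (licensed by $\rho\ge 0$) to pass the perturbed exponents, which converge to $\theta(C_{p,H})$, to the true process. Two discretization issues must be handled alongside this. The sparse early block $\ell\le e^{\eps T}$ (where, under the change $\ell=e^t$, the samples are coarse) contributes only $O(1)$ to $\log q_n$ and is negligible for the exponent; the late block is sampled ever more finely, since $\log(\ell+1)-\log\ell\to 0$, so the discrete maximum over it approximates $\sup_{t}Z(t)$ well. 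Together these show the discrete maximum and the continuous supremum share the exponent $\theta(C_{p,H})$.

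Finally, for $\theta(C_{p,H})\in(0,\infty)$, finiteness follows from positive association of the events $\{\sup_{[k,k+1]}Z<0\}$ (a consequence of $C_{p,H}\ge 0$) together with non-degeneracy, yielding $\P(\sup_{[0,T]}Z<0)\ge c_0^{T}$ for some $c_0\in(0,1)$; positivity follows from the necessary and sufficient criterion of Section~\ref{sec:tools}, since $C_{p,H}(\tau)=e^{-\tau(p+H)}f_{p,H}(1,e^\tau)/f_{p,H}(1,1)\to 0$ exponentially fast: $f_{p,H}(1,e^\tau)\lesssim e^{\tau\max(p+2H-1,\,0)}$ up to logarithmic factors, and $p+H>\max(p+2H-1,0)$ because $H<1$ and $p+H>0$. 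The main obstacle is the rigorous transfer in the third step: upgrading the pointwise covariance asymptotics to the uniform-on-compacts correlation convergence demanded by the continuity tool, while simultaneously controlling the discretization (negligible coarse head, increasingly fine tail) so that the finite-$n$ discrete maximum inherits exactly the exponent of the continuous stationary limit.
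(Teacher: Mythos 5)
Your overall route is the same as the paper's: covariance asymptotics $\E[S_mS_n]\sim\kappa f_{p,H}(m,n)$, exponential time change to produce the stationary limit $C_{p,H}$, an appeal to the continuity tool of Section~\ref{sec:tools} to transfer the exponent, and the integrability criterion (Lemma~\ref{lem:exist}) for $\theta(p,H)\in(0,\infty)$. However, what you defer as ``the main obstacle'' is not a technicality to be patched at the end -- it is the mathematical content of the proof, and your sketch of how to close it would not work as stated. First, Lemma~\ref{lem:DM2} does not ask for uniform-on-compacts convergence of correlations: it requires convergence of $\mathrm{corr}(X_k(s),X_k(s+\tau))$ to $C_{p,H}(\tau)$ \emph{uniformly in $s\ge 0$}, together with the uniform tail-summability condition \eqref{eq:DM1} and the small-lag condition \eqref{eq:DM3}, i.e.\ $\sup_{k,s}\bigl(1-\mathrm{corr}(X_k(s),X_k(s+\tau))\bigr)\lesssim\tau$. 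The paper supplies the first two through Lemma~\ref{lem:limit}, whose whole point is the uniformity of the asymptotics $F_{\rho,\sigma}(u,bu)\sim\kappa u^{2p+2H}f_{p,H}(1,b)$ over \emph{all} $b\ge 1$ (this is what makes $\mathrm{corr}(X_k(s),X_k(s+\tau))\le 4\,C_{p,H}(\tau)$ for large $k$, hence \eqref{eq:DM1}), and it supplies \eqref{eq:DM3} through the separate estimate $F_{\rho,\sigma}(ue^\tau,ue^\tau)-F_{\rho,\sigma}(u,ue^\tau)\lesssim u^{2p+2H}\tau$, proved by splitting the integration region (cf.\ \eqref{eq:tight1.1}--\eqref{eq:tight5.1}). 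Your proposal never mentions \eqref{eq:DM3}, and a Riemann-sum argument on compacts gives neither it nor the required global uniformity.

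Second, your transfer mechanism -- ``sandwich the true correlations between slightly perturbed stationary ones and apply Slepian's lemma'' -- fails in principle: Slepian requires a pointwise ordering of correlation functions, and there is no stationary correlation that dominates (or is dominated by) the prelimit correlations of $X_k$ in the needed two-sided way. This is precisely why the paper replaces Slepian comparison with the Gaussian interpolation bound (Lemma~\ref{lem:op}) plus a blocking argument (Lemmas~\ref{lem:DM2d}, \ref{lem:DM2}). Finally, your treatment of the coarse head is quantitatively wrong: if the early block has size $e^{\eps T}=n^{\eps}$, the crude bound $\P(\max_{\ell\le n^{\eps}}S_\ell<0)\ge 2^{-n^{\eps}}$ contributes $-n^{\eps}\log 2$ to $\log q_n$, which swamps $\theta\log n$ rather than being $O(1)$. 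The block must be of \emph{fixed} size (the paper's $[0,w(k)]$, resp.\ $[0,k]$), with the iterated limits $n\to\infty$ first and $k\to\infty$ afterwards, as in Step~1 of the proof of Theorem~\ref{thm:summble}.
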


We remark that in the case $p=0$, when $S_\ell=\sum_{i=1}^\ell \xi_i$, the results of \cite{abuck,agpp,luysivakoff} imply that $\theta(0,H)=1-H$. The limiting process in this case is the (exponentially time-changed) fractional Brownian motion, for which the exponent was obtained in \cite{molchan1999}.

There seems to be no way to obtain the exponent $\theta(p,H)$ {\it explicitly} for any other $(p,H)$ presently. This is in contrast to the summable correlation case, where for the choice $\sigma(i)\sim  i^p$ the exponent equals $p+1/2$, see (\ref{eqn:savealine02-17}).
Our next theorem explores some properties of the persistence exponent $\theta(p,H)$ as $p$ and $H$ vary. 

\begin{thm}\label{exp:conti}
The exponent $\theta({p,H})$ of Theorem \ref{thm:1} is continuous jointly in $(p,H)$ on the domain $H\in (1/2,1), p+H>0$. It further satisfies 
\begin{eqnarray}
\label{eq:limH}\lim_{H\uparrow 1}\frac{\theta({p,H})}{1-H}&=&1\qquad\text{ for }p>-1,\\
\label{eq:limH2}\lim_{H\downarrow \frac{1}{2}}\theta({p,H})&=&p+\frac{1}{2},\qquad \text{ for }p>-\frac{1}{2},\\
\label{eq:limp}\lim_{p\to\infty}\theta({p,H})&=&\infty,\\
\label{eq:limpinfty} \lim_{p\to\infty}\frac{\theta({p,H})}{p}&=&0,\\
\label{eq:limp+h}\lim_{p\downarrow - H}\frac{\theta({p,H})}{p+H}&=&1.
\end{eqnarray}

\end{thm}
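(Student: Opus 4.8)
The plan is to reduce all the assertions to three ingredients proved earlier: the continuity result and the zero-exponent criterion for persistence of Section~\ref{sec:tools}, Slepian's lemma, and explicit computations of the limiting correlation obtained by sending $(p,H)$ to the relevant boundary. I record at the outset the one exactly solvable case: an Ornstein--Uhlenbeck process with correlation $e^{-\lambda|\tau|}$ has persistence exponent $\lambda$, since writing $X(t)=e^{-\lambda t}W(e^{2\lambda t})$ turns $\{\sup_{t\in[0,T]}X(t)<0\}$ into $\{W<0\text{ on }[1,e^{2\lambda T}]\}$, a probability of order $e^{-\lambda T}$. For the joint continuity statement I would first check that $(p,H)\mapsto f_{p,H}(1,e^\tau)$ and $(p,H)\mapsto f_{p,H}(1,1)$ are continuous on $\{H\in(1/2,1),\,p+H>0\}$ for every fixed $\tau$ by dominated convergence, the domination being supplied as in Definition~\ref{def:smallf} via Selberg's formula; this makes $C_{p,H}(\tau)$ jointly continuous in $(p,H)$ for each $\tau$. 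I would then check that on compact subsets of the parameter domain the $C_{p,H}$ share a common exponentially decaying tail bound, which furnishes the uniform control required to invoke the continuity tool of Section~\ref{sec:tools} and deduce that $\theta(p,H)=\theta(C_{p,H})$ is jointly continuous.

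The three boundary limits \eqref{eq:limH}, \eqref{eq:limH2} and \eqref{eq:limp+h} I would treat uniformly by a time change. For a scale $\epsilon$, the process $\tilde Z_\epsilon(u):=Z(u/\epsilon)$ satisfies $\sup_{u\in[0,U]}\tilde Z_\epsilon(u)=\sup_{\tau\in[0,U/\epsilon]}Z(\tau)$, whence $\theta(\tilde C_\epsilon)=\theta(p,H)/\epsilon$ with $\tilde C_\epsilon(u)=C_{p,H}(u/\epsilon)$; thus each of the three ratios is itself the persistence exponent of an explicit rescaled correlation. For \eqref{eq:limH2} (here $\epsilon=1$) I would show that as $H\downarrow 1/2$ the kernel $|x-y|^{2H-2}$ concentrates on the common diagonal $\{x=y\in(0,1)\}$, forcing $f_{p,H}(1,e^\tau)/f_{p,H}(1,1)\to1$ and hence $C_{p,H}(\tau)\to e^{-(p+\frac12)\tau}$, an OU correlation of rate $p+\frac12$. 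For \eqref{eq:limH} I would take $\epsilon=1-H$ and for \eqref{eq:limp+h} the scale $\epsilon=p+H$; in both cases the decaying prefactor $e^{-\epsilon(u/\epsilon)}=e^{-u}$ is exactly balanced by the growth of $f_{p,H}(1,e^{u/\epsilon})/f_{p,H}(1,1)$ once the $y$-integral is stretched over $(0,e^{u/\epsilon})$, and an endpoint analysis gives $\tilde C_\epsilon(u)\to e^{-u}$, the OU correlation of rate $1$. Feeding these limiting correlations and the recorded OU exponent into a Slepian sandwich between OU processes of rates $\lambda\pm\delta$ (which sidesteps the fact that the limits lie on the boundary of the domain) yields $\theta(p,H)/(1-H)\to1$, $\theta(p,H)\to p+\frac12$, and $\theta(p,H)/(p+H)\to1$.

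I expect the large-$p$ statements \eqref{eq:limp} and \eqref{eq:limpinfty} to be the crux, and I would begin by locating the microscopic scale $\epsilon=p$ (that is, $\tau=a/p$). Substituting $x=e^{-s/p}$ and $y=e^{a/p}e^{-t/p}$ and expanding $x^p,y^p$ around the endpoints, I would show $\tilde C_p(a)=C_{p,H}(a/p)\to\Gamma_H(a):=c_H(a)/c_H(0)$ with $c_H(a)=\int_0^\infty\!\int_0^\infty e^{-s-t}|t-s-a|^{2H-2}\,ds\,dt$; crucially $\Gamma_H(a)\sim c\,a^{2H-2}$ as $a\to\infty$, so $\Gamma_H$ is \emph{not} summable. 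Since $\theta(p,H)/p=\theta(\tilde C_p)$, the upper bound \eqref{eq:limpinfty} would follow from the zero-exponent criterion of Section~\ref{sec:tools}: non-summability gives $\theta(\Gamma_H)=0$, and the continuity tool yields $\limsup_p\theta(\tilde C_p)\le\theta(\Gamma_H)=0$, i.e.\ $\theta(p,H)/p\to0$. For the divergence \eqref{eq:limp} I would instead sample $Z$ on a grid of spacing $\eta$ and use $\theta(p,H)\ge\theta_d(\eta)/\eta$, where $\theta_d(\eta)$ is the exponent of the sampled stationary sequence. The estimate $\sum_{k\ge1}C_{p,H}(k\eta)\asymp p^{2H-2}/\eta$ (polynomial body $\Gamma_H(pk\eta)$ up to lag $k\sim1/\eta$, beyond which $C_{p,H}$ decays exponentially) shows that the choice $\eta=\eta_p=p^{2H-2}\log p$ renders these correlations summable with vanishing sum, so a Berman-type comparison with the i.i.d.\ case keeps $\theta_d(\eta_p)\ge\frac12\log2$ eventually and hence $\theta(p,H)\gtrsim p^{2-2H}/\log p\to\infty$. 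The delicate point, on which I would spend the most effort, is exactly this two-sided control: the rescaled limit $\Gamma_H$ sits precisely on the boundary $\theta=0$, so the upper bound must extract vanishing of the exponent from non-summability through the zero-criterion, while the lower bound must recover the sub-linear rate $p^{2-2H}$ (consistent with $o(p)$ because $H>\frac12$) from the exponential tail of $C_{p,H}$ at unit scale.
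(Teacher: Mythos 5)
Your treatment of the three Ornstein--Uhlenbeck limits \eqref{eq:limH}, \eqref{eq:limH2}, \eqref{eq:limp+h} has a genuine gap: the ``Slepian sandwich between OU processes of rates $\lambda\pm\delta$'' cannot be implemented, and this is not a repairable technicality of your argument as written. Slepian's lemma converts a \emph{pointwise inequality valid at every lag} between two correlation functions into an inequality between exponents; pointwise \emph{convergence} of $\tilde C_\epsilon(\tau)$ to $e^{-\lambda\tau}$ gives no such uniform domination, and in the cases at hand the domination you need provably fails. One half of the sandwich is fine: $C_{p,H}(\tau)\ge e^{-(p+H)\tau}$ holds at all lags (monotonicity of $f_{p,H}(1,\cdot)$), so Slepian gives $\theta(p,H)\le p+H$. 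But the other half --- an upper bound $\tilde C_\epsilon(\tau)\le e^{-(\lambda-\delta)\tau}$ at \emph{all} lags, which is what you need to conclude $\theta\ge\lambda-\delta$ --- is impossible. Concretely, for \eqref{eq:limH2} with $p>0$ fixed: by \eqref{eq:f1}--\eqref{eq:f2}, for every $H\in(1/2,1)$ one has $C_{p,H}(\tau)\sim e^{-(1-H)\tau}\big[(p+1)(p+2H-1)f_{p,H}(1,1)\big]^{-1}$ as $\tau\to\infty$, i.e.\ the exact exponential decay rate of the correlation is $1-H<\tfrac12$, which is strictly smaller than the target $p+\tfrac12-\delta$ for any $\delta<p$. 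Hence $C_{p,H}(\tau)>e^{-(p+1/2-\delta)\tau}$ for all large $\tau$, no OU correlation of rate above $1-H$ dominates $C_{p,H}$, and Slepian can never certify a lower bound on $\theta(p,H)$ better than $1-H\approx\tfrac12$, while the claimed limit is $p+\tfrac12$. An analogous failure (this time at short lags, where the ratio $f_{p,H}(1,e^{u/(p+H)})/f_{p,H}(1,1)=1+cu+o(u)$ with $c$ bounded away from $0$ beats $e^{\delta u}$ for small $\delta$) breaks the sandwich for \eqref{eq:limp+h}. This regime --- persistence exponent strictly larger than the correlation decay rate --- is exactly why the paper develops Lemma \ref{lem:op}, Lemma \ref{lem:DM2d} and Lemma \ref{lem:DM2} (Gaussian interpolation, blocking, and a perturbation argument): those results upgrade pointwise convergence of correlations \emph{plus} the summability/regularity hypotheses \eqref{eq:DM1}, \eqref{eq:DM3}, \eqref{eq:cont_1} into convergence of exponents. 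Your pointwise limits of the rescaled correlations are the same ones the paper computes, so these three limits are recovered once the Slepian sandwich is replaced by an appeal to Lemma \ref{lem:DM2}, verifying its hypotheses from \eqref{eq:f1}--\eqref{eq:f2} as the paper does.

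The remainder of the proposal is essentially sound. The continuity argument and the proof of \eqref{eq:limpinfty} follow the paper's route: rescaling time by $1/p$, identifying the limit $\Gamma_H$ (which coincides with the paper's $C_{\infty,H}$ after a change of variables), showing it is non-integrable, and invoking the zero-exponent criterion (Lemma \ref{lem:exist}(b)) together with Lemma \ref{lem:DM22}(a) --- note that even this step needs the short-lag regularity \eqref{eq:DM3} (supplied by $C_{p,H}(\tau/p)\ge e^{-(p+H)\tau/p}$), not just pointwise convergence, for the same reason as above. For \eqref{eq:limp} your grid-sampling argument ($\theta(p,H)\ge\theta_d(\eta_p)/\eta_p$ with $\eta_p=p^{2H-2}\log p$ chosen so that the sampled correlations have vanishing sum) is genuinely different from the paper's proof, which instead feeds the degenerate limit $C_{p,H}\to 0$ into Lemma \ref{lem:DM22}(b), and your route has the merit of producing an explicit rate $p^{2-2H}/\log p$; however, the ``Berman-type comparison with the i.i.d.\ case'' that keeps $\theta_d(\eta_p)$ bounded below is itself a comparison-of-exponents statement that must be proved --- it does not follow from Slepian, for the same directional reason as above. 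It can be supplied by the paper's interpolation Lemma \ref{lem:op} (Gershgorin plus the vanishing row sums give \eqref{eq:compact} and $||A_n-B_n||_2\to 0$ with $B_n$ the identity, uniformly in $p$), so this piece is fillable from the paper's toolbox, but as written it is an unproven lemma carrying the essential weight.
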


In particular, $\theta({p,H})=p+\frac{1}{2}$ is contradicted by \eqref{eq:limH}, \eqref{eq:limpinfty}, or \eqref{eq:limp+h} as well as $\theta(0,H)=1-H$.

\begin{rem}
It would be interesting to see if one can obtain sharper estimates than the ones provided in \eqref{eq:limp} and \eqref{eq:limpinfty}.  Heuristic calculations suggest that as $p\to\infty$ one has $$p^{2-2H} \lesssim \theta(p,H) \lesssim p^{2-2H}\log p.$$
\end{rem}

\section{General tools} \label{sec:tools}
In this section, we state a few general results on persistence of Gaussian processes which we will apply in the sequel, and some of which may be of independent interest. Almost all our results apply for both discrete time and continuous time Gaussian processes, with time index set $\N:=\{1,2,\ldots\}$ and $\R_{\ge 0}:=[0,\infty)$ respectively. For unifying the statements, we will denote the time index set by $\mathbb{T}$, which is either $\N$ or $\R_{\ge 0}$. Also we will use $\mu$ to denote the counting measure if $\mathbb{T}=\N$, and the Lebesgue measure if $\mathbb{T}=\R_{\ge 0}$. We will also assume throughout that the sample paths of the Gaussian process are continuous almost surely on $\mathbb{T}$, so that the supremum over compact sets in $\mathbb{T}$ is a well defined random variable. If $\mathbb{T}=\N$, then continuity holds vacuously, as any function on $\N$ is continuous.
\\

\fa{Throughout this section, let  $A(.)$ be a non-negative correlation function on $\mathbb{T}$, and let  $\{Z(t),t\in \mathbb{T}\}$ be a centered stationary Gaussian process with correlation function $A(.)$.}

We first state a lemma which gives a necessary and sufficient condition in full generality for truly exponential decay of the persistence probability for stationary Gaussian processes with non-negative correlations. There are sufficient conditions in the literature for truly exponential decay (cf.\ \cite{DM2, FF, FFN, FFJNN}), but in our understanding none of them are both necessary and sufficient. 
\begin{defn}
For any $r\in \R$ let
$$\theta(A,r):=-\lim_{T\to\infty}\frac{1}{T}\log \P(\sup_{t\in [0,T]\cap \mathbb{T} }Z(t)<r),$$
As before, existence of the limit follows by Slepian's lemma and subadditivity.
\end{defn}
\begin{lem}\label{lem:exist}
Assume that $Z$ has continuous sample paths almost surely and that $A(.)$ is non-negative. 
\begin{enumerate}
\item[(a)]
If $\int_\mathbb{T} A(t)\mu(dt)<\infty$, then $\theta(A,r)\in (0,\infty)$ for every $r\in \R$.

\item[(b)]
If $\int_\mathbb{T} A(t)\mu(dt)=\infty$, then $\theta(A,r)=0$ for every $r\in \R$.

\end{enumerate}
\end{lem}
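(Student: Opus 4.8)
The plan is to base everything on the fact that a centered Gaussian process with non negative correlations is positively associated (Pitt's theorem), which controls the non-exit probabilities multiplicatively; the genuine work is then an honest exponential \emph{upper} bound in part (a). Write $g(T):=\P(\sup_{t\in[0,T]\cap\Omega}Z(t)<r)$. The events $\{\sup_{[0,S]}Z<r\}$ and $\{\sup_{[S,S+T]}Z<r\}$ are both decreasing in the family $\{Z(t)\}$, so association and stationarity give $g(S+T)\ge g(S)g(T)$; hence $\log g$ is superadditive and $\theta(A,r)=-\sup_{T>0}T^{-1}\log g(T)\in[0,\infty]$, which re-derives the limit in the definition. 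Moreover $g$ is non-increasing in $T$ and non-decreasing in $r$. I record two reductions I will use: (i) if $g(1)>0$ then $g(T)\ge g(1)^{\lceil T\rceil}$, so $\theta(A,r)\le-\log g(1)<\infty$; and (ii) to prove $\theta(A,r)=0$ it suffices, for each $\eps>0$, to exhibit a \emph{single} $T$ with $g(T)\ge e^{-\eps T}$, since then $\theta(A,r)\le\eps$.

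For part (a), finiteness of $\theta$ reduces by (i) to the one-step positivity $g(1)>0$ for every $r$: this is trivial when $\Omega=\N$ (where $g(1)=\Phi(r)$), and for $\Omega=\R_{\ge0}$ I would condition $Z$ on a fine net in $[0,1]$ to be very negative, using $A\ge0$ to push the conditional mean of the whole restricted path down, so a finite-dimensional Gaussian orthant (Cameron–Martin) bound yields $g(1)>0$. The substance is $\theta>0$. For $r<0$ this is immediate from the running integral $U_T:=\int_{[0,T]\cap\Omega}Z\,d\mu$: since $\sup Z<r$ forces $U_T<r\,\mu([0,T])$, and $\int_\Omega A<\infty$ gives $\Var U_T=\int\!\int A\le C\mu([0,T])$, a one-sided Gaussian tail bound produces $g(T)\le\P\big(U_T<r\,\mu([0,T])\big)\le e^{-cT}$. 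This argument is blind to $r\ge0$, because an average does not feel a positive barrier, and removing that blindness is the main obstacle.

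To reach all $r$ I would first discretize: sampling on a grid of spacing $\delta$ gives $g(T)\le\P\big(Z(k\delta)<r,\ k\le T/\delta\big)$, where the sampled sequence is stationary Gaussian with summable non negative correlations $B(j)=A(j\delta)$, $\sum_j B(j)\approx\delta^{-1}\int_\Omega A<\infty$; exponential decay for the sampled sequence then transfers positivity of the exponent back to $Z$. Thus it suffices to prove exponential decay for a summable non negative Gaussian sequence, which I would attack by a block argument: cut into blocks of length $\ell$ and show that, conditionally on survival below $r$ through the first $i$ blocks, there is a probability bounded away from $0$ (uniformly in $i$) of exceeding $r$ somewhere in block $i+1$. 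The hard part, where I expect to spend the most effort, is decoupling the blocks: both positive association and Slepian's Lemma point the wrong way for an upper bound, and a summable but only polynomially decaying correlation cannot be dominated by any finite-range correlation, so one cannot simply manufacture independent blocks. I would instead use summability to control the conditional law of block $i+1$ given the past — bounding the one-step predicted mean and the residual conditional variance — so that "staying below $r$'' costs a fixed multiplicative factor $<1$ per block.

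For part (b) I would use reduction (ii): fix $\eps>0$ and produce one large $T$ with $g(T)\ge e^{-\eps T}$ via a common-mode conditioning. Let $W=\bar Z_T/\sqrt{\Var\bar Z_T}$ be the standardized average and write $Z(t)=c_tW+R(t)$ with $R\perp W$ and $c_t=\mathrm{Cov}(Z(t),W)\ge0$ (here $A\ge0$ is essential). On $\{W\approx-a\}$ the deterministic part $c_tW$ is uniformly negative, so $g(T)\ge\P(W\approx-a)\,\P\big(\sup_t R(t)<r+a\inf_t c_t\big)$; the first factor is $\sim e^{-a^2/2}$ and the second is $\ge\tfrac12$ once $a\gtrsim\E\sup_{[0,T]}R/\inf_t c_t$. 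Because $\int_\Omega A=\infty$ forces $\mu([0,T])\,\Var\bar Z_T=\Var U_T/\mu([0,T])\to\infty$, the common mode $c_t$ is large enough that the required $a$ satisfies $a^2=o(T)$, giving $g(T)\ge e^{-\eps T}$ and hence $\theta(A,r)=0$. The delicate point here is bounding $\E\sup_{[0,T]}R$ against the possibly very slowly growing quantity $\int_0^T A$ in borderline non-integrable cases.
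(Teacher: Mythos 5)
Your scaffolding (superadditivity via association/Slepian, reduction of finiteness to positivity on a short interval, and the reduction (ii) for part (b)) is sound, and you correctly identify where the real difficulties lie; but at both of those places the proposal stops short of an argument, and in each case the plan you sketch would in fact fail, for reasons that are exactly what the paper's proof is designed to circumvent.

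In part (a), your block plan for general $r$ --- ``conditionally on survival below $r$ through the first $i$ blocks, there is probability bounded away from $0$ of exceeding $r$ in block $i+1$'' --- cannot be run by controlling the conditional law given the past. The conditioning is on an \emph{event}, not on values: within the survival event the past path can be arbitrarily negative, and since $A\ge 0$ this drags the conditional mean of block $i+1$ down by an unbounded amount, so no uniform lower bound on the exceedance probability follows from summability; indeed positive association gives $\P(\text{block } i+1 \text{ below } r \mid \text{past below } r)\ge \P(\text{block } i+1 \text{ below } r)$, which is precisely the wrong direction. The paper avoids conditioning altogether: it bounds $\P(\sup_{[0,T]}Z<r)$ by the probability that the \emph{normalized integrals} $X(i)=\zeta^{-1}\int_{s_{i-1}}^{s_i}Z(u)\,du$ over unit intervals separated by gaps of length $M$ all lie below $\zeta^{-1}r$; summability of $A$ plus Gershgorin's circle theorem puts all eigenvalues of the covariance matrix $B$ of $(X(1),\dots,X(N))$ in $[1-\eps_M,1+\eps_M]$, and then a direct density-ratio computation (replace $e^{-{\bf x}'B^{-1}{\bf x}/2}$ by $e^{-{\bf x}'{\bf x}/2(1\pm\eps_M)}$ in numerator and denominator) yields a genuine per-block factor $\beta_M<1$ for all $r$. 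This eigenvalue/density-comparison step is the missing idea; nothing in your proposal substitutes for it.

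In part (b), your common-mode decomposition $Z(t)=c_tW+R(t)$ is essentially the paper's conditioning on $Y_T=\int_0^T Z$, but the closing step is quantitatively wrong: you require $a\gtrsim \E\sup_{[0,T]}R/\inf_t c_t$ with $a^2=o(T)$, and assert that $\int_\Omega A=\infty$ forces this. Take $A(t)\sim 1/t$ (a valid correlation function by P\'olya's criterion), so $I(T):=\int_0^T A\sim\log T$. Then $\inf_t c_t\asymp I(T/2)/\sqrt{T I(T)}\asymp\sqrt{\log T/T}$ while $\E\sup_{[0,T]}R\asymp\sqrt{\log T}$, forcing $a\asymp\sqrt{T}$ and giving only $\theta\le c$ for some constant $c>0$, not $\theta=0$. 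Two ingredients are needed to repair this, and both are in the paper: first, the Gaussian correlation inequality (Royen) factors the conditional non-exit probability over \emph{unit} blocks, each handled by Sudakov--Fernique plus Borell--TIS at constant cost $\alpha=\E\sup_{[0,1]}Z$, so that the whole argument only needs the conditional mean shift $K_T=\delta I(T/2)/\sqrt{2I(T)}$ to tend to infinity (no $\sqrt{\log T}$ ever appears); second, $K_T\to\infty$, i.e.\ $I(T/2)^2/I(T)\to\infty$, is \emph{not} automatic for all $T$ and must be established along a subsequence, which the paper does by contradiction: if $I(T)\ge\eps I(T/2)^2$ eventually, iterating gives $I(2^kT_0)\ge(\eps I(T_0))^{2^k}$, contradicting $I(T)\le T$ once $\eps I(T_0)\ge 2$. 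Your proposal flags the ``delicate point'' but supplies neither ingredient. (Minor remark: for positivity of $\P(\sup_{[0,1]}Z<r)$ you do not need any net-conditioning argument; the paper's observation suffices --- if $\P(\sup_{[0,1/k]}Z<r)=0$ for every $k$, continuity of sample paths gives $\P(Z(0)<r)=0$, a contradiction.)
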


\fa{The second result proves a continuity in levels for processes with non-negative correlations.
\begin{thm}\label{lem:ohad}
The function $r\mapsto \theta(A,r)$ is continuous, i.e. the exponent is continuous in its levels.
\end{thm}
%
%
%
%
A previous result in this direction is that of \cite[Theorem 3.1]{Li_Shao}, who showed continuity of the exponent under the assumption that the correlation function $\rho(.)$ is strictly decreasing. More recently, \cite[Lemma 1.1]{FFM} proves a significantly improved version of continuity in levels, which allows for negative correlations, at the expense of mild integrability assumptions on the spectral measure. Theorem \ref{lem:ohad} shows that for non-negative correlation functions, no extra assumption on the spectral measure is necessary for continuity in levels.}
\\

We will now focus on comparing persistence of different processes. In this direction, we first state a lemma which allows us to compare persistence of two Gaussian vectors in $\R^n$.

\begin{lem}\label{lem:op}
Suppose $\{Z_n(i)\}_{1\le i\le n}$ and $\{Y_n(i)\}_{1\le i\le n}$ are two triangular arrays of centered Gaussian processes with positive definite covariance matrices $A_n,B_n$ respectively, such that 
\begin{align}\label{eq:compact}
\limsup_{n\to\infty}\Big(||A_n||_2+||A_n^{-1}||_2+||B_n||_2+||B_n^{-1}||_2\Big)<\infty,
\end{align}
where $||.||_2$ denotes the Euclidian operator norm/largest eigenvalue of a symmetric matrix. Assume further that $\lim_{n\to\infty}||A_n-B_n||_2=0$. Then we have
$$\limsup_{n\to\infty}\frac{1}{n}\left|\log \P(\max_{1\le i\le n}Y_n(i)<r)-\log \P(\max_{1\le i\le n}Z_n(i)<r)\right|=0.$$

\end{lem}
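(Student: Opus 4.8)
The plan is to prove Lemma~\ref{lem:op} by interpolating between the two Gaussian vectors and bounding the effect of changing the covariance matrix via a Gaussian comparison (Slepian-type / Kac--Rice or Plackett's formula) argument, controlled uniformly using the spectral bounds \eqref{eq:compact}. First I would observe that since both $A_n$ and $B_n$ have uniformly bounded operator norms and uniformly bounded inverse operator norms, the densities of $Z_n$ and $Y_n$ are comparable: there is a deterministic constant (independent of $n$) controlling the ratio of their Radon--Nikodym derivatives on large regions. The key quantity to estimate is the difference $\log \P(\max_i Y_n(i)<r) - \log \P(\max_i Z_n(i)<r)$ after dividing by $n$.

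The main technical step would be a smooth interpolation: for $t\in[0,1]$ set $C_n(t):=(1-t)A_n + t B_n$, which remains positive definite with uniformly controlled spectrum by convexity of the eigenvalue bounds, and let $p_n(t):=\P(X^{(t)}\in (-\infty,r)^n)$ where $X^{(t)}$ is centered Gaussian with covariance $C_n(t)$. Differentiating $p_n(t)$ in $t$ and using the heat-equation identity $\partial_{C_{ij}} \varphi_C = \partial_{x_i}\partial_{x_j}\varphi_C$ for the Gaussian density $\varphi_C$ (Plackett's formula), one writes
$$
p_n(1)-p_n(0)=\int_0^1 \sum_{i<j}(B_n-A_n)_{ij}\,\frac{\partial}{\partial t}\!\left[\text{boundary terms}\right]dt,
$$
expressing the derivative as an integral over the boundary of the box $(-\infty,r)^n$ of the bivariate Gaussian density in the coordinates $(x_i,x_j)$. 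The essential bound is that each such boundary term is controlled by the two-dimensional Gaussian density of $(X_i,X_j)$ evaluated at $(r,r)$ times a conditional probability, and the total contribution is at most $\|B_n-A_n\|_2$ times a factor that, crucially, is \emph{subexponential} in $n$; dividing by $n$ and using $\|A_n-B_n\|_2\to 0$ then forces the limit to vanish.

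The hard part will be controlling the sum over the $O(n^2)$ pairs $(i,j)$ so that, after dividing by $n$, it still tends to zero despite $\P(\max_i Z_n(i)<r)$ itself decaying (possibly) exponentially in $n$. The naive bound on $|p_n(1)-p_n(0)|$ in terms of $\sum_{i,j}|(B_n-A_n)_{ij}|$ is too weak, since it compares an additive perturbation against a quantity that may be exponentially small; so one cannot simply write $|\log p_n(1)-\log p_n(0)|\le |p_n(1)-p_n(0)|/\min(p_n(0),p_n(1))$. The correct route is to bound the \emph{relative} increment $\frac{d}{dt}\log p_n(t)$ directly, showing that each interpolation-derivative term is proportional to a conditional density that is itself bounded by a constant multiple of $p_n(t)$ (using the uniform spectral control and the non-negativity/positive-definiteness to dominate the conditional survival probabilities by the unconditional one up to universal factors). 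Concretely, I would aim to prove an estimate of the form $\bigl|\frac{d}{dt}\log p_n(t)\bigr| \le c\, n\, \|A_n-B_n\|_2$ with $c$ uniform in $n$ and $t$, which integrates to $|\log p_n(1)-\log p_n(0)|\le c\,n\,\|A_n-B_n\|_2$, and dividing by $n$ and sending $n\to\infty$ finishes the proof. Establishing this linear-in-$n$ (rather than quadratic) relative bound, using the eigenvalue control to tame the inverse covariance entries appearing in the Gaussian derivatives, is where the real work lies.
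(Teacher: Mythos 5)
Your overall strategy---interpolate between the two Gaussian laws and bound the derivative of the \emph{logarithm} of the box probability rather than the probability itself---is the right one, and you correctly diagnose why the naive additive comparison fails. But there is a genuine gap at precisely the point you defer as ``where the real work lies'', and the Plackett-type route you propose cannot close it. Your plan decomposes $\frac{d}{dt}p_n(t)$ into $O(n^2)$ pair terms, each weighted by $|(B_n-A_n)_{ij}|$, and hopes each boundary term is at most a constant multiple of $p_n(t)$. Even granting that per-pair ratio bound, the triangle inequality over pairs only yields
$$\Bigl|\frac{d}{dt}\log p_n(t)\Bigr| \le c\sum_{i<j}|(B_n-A_n)_{ij}| \le \frac{c}{2}\, n\, \|A_n-B_n\|_F \le \frac{c}{2}\, n^{3/2}\, \|A_n-B_n\|_2,$$
and the exponent $3/2$ is attained (e.g.\ by a symmetric matrix of independent signs scaled by $n^{-1/2}$, which has bounded operator norm but entrywise $\ell_1$ norm of order $n^{3/2}$; such a perturbation is realizable within the hypotheses by taking $A_n$ a large multiple of the identity). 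After dividing by $n$ you are left with $\sqrt{n}\,\|A_n-B_n\|_2$, and since the lemma assumes no rate for $\|A_n-B_n\|_2\to 0$, this need not vanish. The operator-norm hypothesis simply cannot be exploited entry by entry; it has to act as a quadratic form against the whole vector. Moreover, the per-pair ratio bound itself is unsubstantiated: the lemma assumes nothing about the signs of the correlations (only \eqref{eq:compact}), and comparing $\P(X_k<r,\ k\ne i,j \mid X_i=X_j=r)$ with $p_n(t)$ by a Cameron--Martin shift plus Cauchy--Schwarz only gives a bound of order $\sqrt{p_n(t)}$, which is exponentially too weak.

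The paper's proof avoids both problems. It interpolates the \emph{inverse} covariances, $\Sigma_{n,\gamma}^{-1}:=(1-\gamma)A_n^{-1}+\gamma B_n^{-1}$, and writes each log-probability as a difference of logarithms of unnormalized Gaussian integrals over $(-\infty,r)^n$ and over $\R^n$. The $\gamma$-derivative of such a logarithm is exactly $\frac12\E\bigl[X'(A_n^{-1}-B_n^{-1})X \mid X(i)\in D,\ i\le n\bigr]$, which is bounded by $\frac12\|A_n^{-1}-B_n^{-1}\|_2\,\E\bigl[\sum_i X(i)^2\mid X(i)\in D,\ i\le n\bigr]$: a single operator-norm contraction, with no pair decomposition. (Your linear interpolation of the covariances would serve equally well, since then the log-derivative is $-\frac12\mathrm{tr}(C^{-1}\dot C)+\frac12\E[X'C^{-1}\dot C C^{-1}X\mid\cdot\,]$, which has the same structure.) The genuine work is the uniform conditional second-moment bound $\E[\sum_i X(i)^2\mid X(i)\in D,\ i\le n]=O(n)$, which the paper proves by splitting at level $Kn$, applying Cauchy--Schwarz to the tail, and comparing three rates: $\P(\sum_i X(i)^2>Kn)$ decays like $e^{-\tilde C(K)n}$ with $\tilde C(K)\to\infty$ as $K\to\infty$ (chi-square large deviations plus the spectral bounds \eqref{eq:compact}), the denominator $\P(X(i)\in D,\ i\le n)$ is at least exponentially small with a \emph{fixed} rate, and the fourth moment is only polynomial in $n$; taking $K$ large kills the tail term. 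Combined with $\|A_n^{-1}-B_n^{-1}\|_2\le\|A_n^{-1}\|_2\,\|A_n-B_n\|_2\,\|B_n^{-1}\|_2\to0$, this gives the claim. This conditional second-moment estimate is the missing core of your argument, and your proposal as written does not supply it.
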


Using this lemma, we first prove a continuity lemma for persistence exponents for discrete time Gaussian processes. This significantly improves \cite[Lemma 5.1]{AMZ} by getting the same conclusion under much weaker hypotheses.


\begin{lem}\label{lem:DM2d}
For every positive integer $k$, let $\{Z_k(t)\}_{t\in \N}$ be a discrete time centered Gaussian process with non-negative correlation function $A_k(.,.)$. Further, let $\{Z_\infty(t)\}_{t\in \N}$ be a stationary centered Gaussian process with non-negative correlation function $A_\infty(.)$. Assume that $A_k(s,s+\tau)$ converges to $A_\infty(\tau)$ as $k\to\infty$, uniformly in $s\in \N_0:=\N\cup \{0\}$. Suppose further that with $g_k(\tau):=\sup_{s\in \N_0}A_k(s,s+\tau)$ we have
\begin{align}\label{eq:DM1d}
\lim_{L\to\infty}\limsup_{k\to\infty}\sum_{i=L}^\infty g_k(i)=0.
\end{align}

Then for every $r\in \R$ we have
\begin{align}\label{eq:conclusion_d}
\limsup_{k,n\to\infty}\frac{1}{n}\Big|\log \P(\sup_{1\le i\le n}Z_k(i)<r)-\log \P(\sup_{1\le i\le n}Z_\infty(i)<r)\Big|=0.
\end{align}

\end{lem}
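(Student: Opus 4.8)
The plan is to deduce the statement from the finite-dimensional comparison Lemma \ref{lem:op}, applied to the full covariance matrices of the two processes restricted to $\{1,\dots,n\}$. Write $A_n^{(k)}$ for the covariance matrix of $(Z_k(1),\dots,Z_k(n))$ and $B_n$ for that of $(Z_\infty(1),\dots,Z_\infty(n))$. The obstruction to applying Lemma \ref{lem:op} directly is that hypothesis \eqref{eq:compact} requires the inverse matrices to have bounded operator norm, whereas a stationary process with summable non negative correlations can have a spectral density vanishing at some frequency (e.g.\ $A_\infty(0)=1$, $A_\infty(1)=1/2$ and $A_\infty(\tau)=0$ otherwise), so that the smallest eigenvalue of $B_n$ is not bounded below in $n$. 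I would therefore first regularise: let $W$ be a standard i.i.d.\ Gaussian sequence independent of everything, and set $Z_k^\eps=Z_k+\eps W$ and $Z_\infty^\eps=Z_\infty+\eps W$, whose covariance matrices $A_n^{(k)}+\eps^2 I$ and $B_n+\eps^2 I$ now have smallest eigenvalue $\ge\eps^2$, hence inverse operator norm $\le\eps^{-2}$, while the off-diagonal entries (and thus non negativity) are unchanged.

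Next I would verify the remaining hypotheses of Lemma \ref{lem:op} for the regularised matrices, uniformly in $n$. The upper bounds $\|A_n^{(k)}\|_2,\|B_n\|_2\le C$ and the key estimate $\|A_n^{(k)}-B_n\|_2\to 0$ all follow from the Schur test $\|M\|_2\le\max_s\sum_t|M(s,t)|$. For the difference, using $A_k\ge 0$ one has the row-sum bound $\sum_t|A_n^{(k)}(s,t)-B_n(s,t)|\le 2\sum_{\tau<L}\sup_{s}|A_k(s,s+\tau)-A_\infty(\tau)|+2\sum_{\tau\ge L}\big(g_k(\tau)+A_\infty(\tau)\big)$; choosing $L$ large makes the tail uniformly small by \eqref{eq:DM1d} and summability of $A_\infty$, and then letting $k\to\infty$ kills the finite initial sum by the uniform convergence hypothesis. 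Thus $\delta_k:=\sup_n\|A_n^{(k)}-B_n\|_2\to 0$. Consequently, for any sequences $k_j,n_j\to\infty$ the arrays $(A_{n_j}^{(k_j)}+\eps^2 I,\,B_{n_j}+\eps^2 I)_j$ satisfy all hypotheses of Lemma \ref{lem:op} (indexed by $j$), which yields, for each fixed $\eps>0$,
$$
\limsup_{k,n\to\infty}\frac1n\Big|\log\P(\sup_{1\le i\le n}Z_k^\eps(i)<r)-\log\P(\sup_{1\le i\le n}Z_\infty^\eps(i)<r)\Big|=0.
$$

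It remains to remove the regularisation by a Slepian sandwich that is uniform in $n$ and $k$. I would bound $\frac1n\log\P(\sup_{i\le n}Z(i)<r)$ between two expressions in the regularised probabilities, valid structurally for both $Z\in\{Z_k,Z_\infty\}$. On one side, normalising $Z^\eps$ to unit variance only shrinks the off-diagonal correlations (they get divided by $1+\eps^2$), so Slepian's inequality gives $\P(\sup_{i\le n}Z^\eps(i)<r\sqrt{1+\eps^2})\le\P(\sup_{i\le n}Z(i)<r)$. On the other side, independence of $Z$ and $W$ gives $\P(\sup_{i\le n}Z(i)<r)\le\P(\sup_{i\le n}Z^\eps(i)<r+\delta)\,\Phi(\delta/\eps)^{-n}$, contributing the harmless rate $-\log\Phi(\delta/\eps)\to 0$ as $\eps\to0$. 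Feeding these two-sided bounds (for $Z_k$ and for $Z_\infty$) into the displayed identity, the regularised probabilities of $Z_k$ and $Z_\infty$ agree asymptotically, while the level-continuity Lemma \ref{lem:ohad} absorbs the level shifts $r\mapsto r+\delta$ and $r\mapsto r\sqrt{1+\eps^2}$ in the limit. Hence for every $\eps,\delta>0$ the left-hand side of \eqref{eq:conclusion_d} is bounded by a quantity $C(\eps,\delta)$ that tends to $0$ as $\eps,\delta\downarrow 0$, giving \eqref{eq:conclusion_d}.

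I expect the main obstacle to be exactly this de-regularisation. Lemma \ref{lem:op} forces the well-conditioning hypothesis that the raw processes need not satisfy, and the white-noise perturbation that restores it distorts the persistence level in an $n$-dependent way; controlling that distortion uniformly in $n$, and in $k$ where no single exponent exists for the non stationary $Z_k$, is the delicate point. This is precisely where the level-continuity Lemma \ref{lem:ohad} and the non negativity of all correlations (through Slepian's inequality) are essential.
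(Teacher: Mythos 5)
Your proposal is correct, but it takes a genuinely different route from the paper's proof. The paper never compares the full $n\times n$ covariance matrices of $Z_k$ and $Z_\infty$: its lower bound uses only blocking, Slepian's inequality and the uniform convergence of finite-dimensional distributions; its upper bound perturbs by $hW$ (your $\eps W$), splits $\{1,\dots,n\}$ into blocks of length $M$ separated by gaps of length $m$, and applies Lemma~\ref{lem:op} to compare the perturbed process with a \emph{block-independent} copy of itself --- there the operator-norm smallness comes solely from the tail condition \eqref{eq:DM1d}, since entries linking distinct blocks have $|i-j|\ge m$. The decoupled blocks are then matched to $Z_\infty$ by uniform convergence, and the perturbation is undone by a combinatorial argument (counting indices with $|W_i|>h^{-1/2}$, binomial entropy estimates, Slepian), with Lemma~\ref{lem:ohad} absorbing the level shift at the very end. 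You instead distill the hypotheses into the single estimate $\sup_n\|A_n^{(k)}-B_n\|_2\to 0$ via Gershgorin/Schur row sums --- here you should state explicitly that \eqref{eq:DM1d} together with Fatou forces $\sum_\tau A_\infty(\tau)<\infty$, which your tail bound uses --- then apply Lemma~\ref{lem:op} once to the $\eps^2 I$-regularised matrices, and de-regularise with a two-sided Slepian sandwich. This shortcuts the paper's blocking steps entirely and replaces its combinatorial de-perturbation by something cleaner; your observation that after the sandwich every exponent in sight is $\theta(A_\infty,\cdot)$ at levels near $r$, so that Lemma~\ref{lem:ohad} applies to a single fixed correlation function, is exactly what resolves the worry in your last paragraph about $Z_k$ having no exponent of its own. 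The trade-off is modularity: both directions of your argument consume the summability hypothesis \eqref{eq:DM1d}, whereas in the paper the lower bound needs only uniform convergence of laws, a feature exploited later in the proof of Lemma~\ref{lem:DM22} to get one-sided conclusions under weaker assumptions. One small imprecision to fix: your final error term has the shape $|\theta(A_\infty,r)-\theta(A_\infty,r')|-\log\Phi(\delta/\eps)$ with $r'\to r$, and $-\log\Phi(\delta/\eps)$ vanishes only when $\delta/\eps\to\infty$; so the limits must be iterated ($\eps\to 0$ first, then $\delta\to 0$) or taken along, say, $\delta=\sqrt{\eps}$, rather than ``as $\eps,\delta\downarrow 0$'' jointly.
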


Lifting the last result to the continuous setting, we prove the following lemma, which we will use in the sequel to prove all the main results of this paper.

\begin{thm}\label{lem:DM2}
For every positive integer $k$, let $\{Z_k(t)\}_{t\in \R_{\ge 0}}$ be a continuous time centered Gaussian process with non-negative correlation function $A_k(.,.)$ and continuous sample paths. Further, let $\{Z_\infty(t)\}_{t\in \R_{\ge 0}}$ be a stationary centered Gaussian process with continuous sample paths and non-negative correlation function $A_\infty(.)$. Assume that $A_k(s,s+\tau)$ converges to $A_\infty(\tau)$ as $k\to\infty$, uniformly in $s\geq 0$. Suppose further that the following conditions hold.

\begin{enumerate}
\item[(a)]
Setting $g_k(\tau):=\sup_{s\geq 0}A_k(s,s+\tau)$, for every positive integer $\ell$ we have
\begin{align}\label{eq:DM1}
\lim_{L\to\infty}\limsup_{k\to\infty}\sum_{i=L}^\infty g_k(i/\ell)=0.
\end{align}

\item[(b)]
There exists an $\eta>1$ such that 
\begin{align}
\label{eq:DM3}
\limsup_{\eps\to 0}|\log \eps|^\eta \sup_{1\le k\le \infty,s\ge 0, \tau\in [0,\eps]}(1-A_k(s,s+\tau))<\infty.
\end{align}

\item[(c)]

The limiting process $\{Z_\infty(t)\}_{t\in \R_{\ge 0}}$ has a persistence exponent which is sampling continuous, i.e.
\begin{align}\label{eq:cont_1}
\limsup_{\ell\to\infty}\limsup_{T\to\infty}\frac{1}{T}\Big|\log\P(\sup_{t\in [0,T]}Z_\infty(t)<r)-\log \P(\max_{1\le i\le \lceil T\ell \rceil}Z_\infty(i/\ell)<r)\Big|=0.
\end{align}
Then for every $r\in \R$ we have
\begin{align}\label{eq:conclusion}
\limsup_{k,T\to\infty}\frac{1}{T}\Big|\log \P(\sup_{t\in [0,T]}Z_k(t)<r)-\log \P(\sup_{t\in [0,T]}Z_\infty(t)<r)\Big|=0.
\end{align}
\end{enumerate}
\end{thm}

The above lemma is a significant generalization of previous versions of similar continuity results in \cite[Lemma 3.1]{DM2} and \cite[Theorem 1.6]{DM}. While the previous lemmas required a supremum decay control over $A_k(s,s+\tau)$, the current lemma replaces this by a summability condition, cf.\ \eqref{eq:DM1}. In particular, none of the previous results can be used to prove Theorem \ref{thm:summble} in this generality. Below we comment on sufficient conditions for verifying \eqref{eq:DM1} and \eqref{eq:cont_1}, respectively.

\begin{rem}\label{rem:DM1}
A sufficient condition for \eqref{eq:DM1} is that $\sup_{s\ge 0,k\in\N}A_k(s,s+\tau)\le g(\tau)$, where $g$ satisfies one of the following conditions:
\begin{enumerate}
\item[(i)]
$
\limsup\limits_{\tau\to\infty}\frac{\log g(\tau)}{\log \tau}<-1
$ (thus implying \cite[Theorem 1.6]{DM});

\item[(ii)]
$g$ is regularly varying and integrable
(thus implying \cite[Lemma 3.1]{DM2});

\item[(iii)]
$g$ is non-increasing and integrable.
\end{enumerate}
We do note that neither of these sufficient conditions, (i)--(iii), is enough to prove Theorem~\ref{thm:summble} in full generality, and we do need the full strength of Theorem~\ref{lem:DM2}. For all other results in this paper, the above sufficient conditions are enough.
\end{rem}

\begin{rem}\label{eq:cont_2020}
Condition \eqref{eq:cont_1} essentially demands that for the limiting Gaussian process, the persistence exponents obtained by discrete sampling in finer and finer grids converge to the persistence exponent of the continuous process. A sufficient condition for \eqref{eq:cont_1} is that  any of the conditions (i)--(iii) of Remark~\ref{rem:DM1} holds with $g(\tau)=A_\infty(\tau)$,
which can be verified by an application of \cite[Theorem 1.6]{DM} or \cite[Lemma 3.1]{DM2}. 
\end{rem}
\begin{rem}\label{rem:orn}
It is easy to verify that the scaled Ornstein-Uhlenbeck process with correlation function $A_\infty(\tau)=e^{-\alpha|\tau|}$ satisfies all the conditions (i)--(iii) of Remark~\ref{rem:DM1}, for any $\alpha>0$.  A fact that we will repeatedly use in this paper is that the scaled Ornstein-Uhlenbeck process has persistence exponent $\alpha$ (cf.\ e.g.\ the proof of  \cite[Lem 2.5]{DPSZ}).
\end{rem}

%
%
%
%
%
%
%

The following lemma is a modification of Theorem~\ref{lem:DM2} to the case where the limiting correlation is either non-integrable or degenerate, to deduce that the corresponding exponents converge to $0$ or $\infty$, respectively.

\begin{lem}\label{lem:DM22}
Suppose $\{A_k(\tau)\}_k$ is a sequence of stationary non-negative correlation functions on $\mathbb{T}$. Fix $r\in\R$.

\begin{enumerate}
\item[(a)]
Suppose \eqref{eq:DM3} holds, and $\lim\limits_{k\to\infty}A_k(\tau)=A_\infty(\tau)$ for every $\tau> 0$, and $\theta(A_\infty,r)=0$.
Then we have
$\lim_{k\to\infty}\theta(A_k{,r})=0$.

\item[(b)] 
Suppose \eqref{eq:DM1} holds, and  $\lim\limits_{k\to\infty}A_k(\tau)=0$ for every $\tau>0$. Then we have
$\lim\limits_{k\to\infty}\theta(A_k,r)=\infty.$
\end{enumerate}
\end{lem}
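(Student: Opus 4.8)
For part (a) the goal is an upper bound on the exponents $\theta(A_k,r)$, while for part (b) it is a lower bound, so I would treat the two parts by dual arguments. Neither follows from Lemma~\ref{lem:DM2}: in (a) the limit $A_\infty$ is non-integrable, so by Lemma~\ref{lem:exist}(b) its own exponent already vanishes and hypothesis (c) of Lemma~\ref{lem:DM2} is unavailable, whereas in (b) the limiting correlation is degenerate and is not a genuine continuous-time correlation at all. The common device is to compare the continuous-time supremum with a grid maximum, but in (a) the grid serves to transfer a fixed-window probability, while in (b) it serves to manufacture asymptotic independence.

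For part (a), I would start from the subadditivity underlying the definition of $\theta(A,r)$: since for non-negatively correlated centered Gaussian fields the decreasing events $\{\sup_{[0,S]}Z<r\}$ and $\{\sup_{[S,S+T]}Z<r\}$ are positively associated, the sequence $a_T:=-\log\P(\sup_{t\in[0,T]\cap\Omega}Z_k(t)<r)$ is subadditive, so $\theta(A_k,r)=\inf_{T>0}\tfrac1T a_T\le\tfrac1T a_T$ for every fixed $T$. I would then establish the fixed-window convergence $\P(\sup_{t\in[0,T]\cap\Omega}Z_k(t)<r)\to\P(\sup_{t\in[0,T]\cap\Omega}Z_\infty(t)<r)$ as $k\to\infty$: on a finite grid this is merely continuity of a multivariate Gaussian orthant probability in its covariance matrix, which converges because $A_k(\tau)\to A_\infty(\tau)$ pointwise, and the passage from grid to continuum is uniform in $k$ thanks to the near-diagonal regularity \eqref{eq:DM3}, which forces a common modulus of continuity (the usual chaining/Fernique estimate, exactly as in the proof of Lemma~\ref{lem:DM2}). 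Letting first $k\to\infty$ and then $T\to\infty$ gives $\limsup_k\theta(A_k,r)\le -\tfrac1T\log\P(\sup_{[0,T]}Z_\infty<r)\to\theta(A_\infty,r)=0$, and since $\theta\ge0$ always, $\theta(A_k,r)\to0$.

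For part (b), I would instead bound $\theta(A_k,r)$ from below by sampling on the grid $\{i/\ell\}$. Because a supremum dominates any sampled maximum, $\P(\sup_{[0,T]}Z_k<r)\le\P(\max_{1\le i\le\lceil T\ell\rceil}Z_k(i/\ell)<r)$, whence $\theta(A_k,r)\ge\ell\,\theta^{(\ell)}(A_k,r)$, where $\theta^{(\ell)}(A_k,r)$ is the per-step exponent of the discrete stationary process $\{Z_k(i/\ell)\}_{i\ge1}$ with correlation $\tau\mapsto A_k(\tau/\ell)$. For each fixed $\ell$, the hypothesis that $A_k(\tau)\to0$ for $\tau>0$ gives $A_k(i/\ell)\to0$ for every $i\ge1$, and \eqref{eq:DM1} supplies precisely the uniform tail control \eqref{eq:DM1d} needed to upgrade this to uniform convergence of these correlations to the i.i.d.\ correlation $\ed{i=0}$. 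Lemma~\ref{lem:DM2d} then yields $\theta^{(\ell)}(A_k,r)\to-\log\Phi(r)$ as $k\to\infty$, where $\Phi$ is the standard normal distribution function, so that $\liminf_k\theta(A_k,r)\ge-\ell\log\Phi(r)$. Since $-\log\Phi(r)>0$ for every finite $r$, letting $\ell\to\infty$ forces $\theta(A_k,r)\to\infty$.

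The routine inputs are the subadditive/Fekete bookkeeping and the finite-dimensional Gaussian orthant continuity. The main obstacle is the fixed-window convergence in part (a) in continuous time: with only pointwise convergence of the covariances at hand, one must control the supremum functional uniformly in $k$, which is exactly what \eqref{eq:DM3} buys through a uniform modulus-of-continuity estimate. In discrete time this step is vacuous, so the genuine content of (a) lies entirely in the path-regularity transfer from the grid to the continuum.
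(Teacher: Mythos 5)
Your proposal is correct and follows essentially the same route as the paper: part (a) is exactly the lower-bound half of the proof of Lemma~\ref{lem:DM2} (fixed-window convergence via \eqref{eq:DM3} combined with Slepian/subadditivity, then letting the window length tend to infinity), and part (b) is exactly its upper-bound half (grid sampling plus Lemma~\ref{lem:DM2d}, with the white-noise limit giving the factor $-\ell\log\P(\mathcal{N}(0,1)<r)\to\infty$). The paper states this more tersely by pointing to \eqref{eqn:20200228doublestar} and \eqref{eq:verbatim_2020}, but the substance is identical to what you wrote.
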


\section{Proof of Theorem \ref{thm:summble}} \label{sec:proofofsummable}


The following definition is crucial for the notation in the rest of this paper.
\begin{defn}
Extend $\sigma(.), s(.)$ to positive reals by setting $\sigma(x)=\sigma(\lceil x\rceil)$, and $\minorchanges{s^2(t)=\int_0^t\sigma^2(x)dx}$. Let $w(.)$ denote the inverse of $s(.)$, i.e. $w(s(u))=s(w(u))=u$ for all $u>0$.
\end{defn}

After interpolating the sequences $\rho(.)$ and $\sigma(.)$, we do the same with the covariances of $\{S_\ell\}$.

\begin{defn} \label{defn:F}
For any two positive reals $\ell_1,\ell_2$  let $$F_{\rho,\sigma}(\ell_1,\ell_2):=\int_0^{\ell_1}\int_0^{\ell_2}\sigma(x)\sigma(y)\rho(\lceil x\rceil-\lceil y\rceil)dxdy.$$
In particular, if $\ell_1,\ell_2$ are positive integers, then we have
$$F_{\rho,\sigma}(\ell_1,\ell_2)=\sum_{i=1}^{\ell_1}\sum_{j=1}^{\ell_2}\sigma(i)\sigma(j)\rho(i-j).$$
\end{defn}


We proceed with stating three lemmas which will be used to prove Theorem \ref{thm:summble}, the proofs of which we defer to the end of the section.

\begin{lem} \label{lem:sumlim1}
Let $\rho$ satisfy \eqref{eq:summable}, and assume that $\sigma$ satisfies \eqref{eq:sigma2} and \eqref{eq:sigma}. Then for every $b\ge 1$ 
\begin{align}\label{eq:limit_sum1}
\lim_{u\to\infty}\frac{F_{\rho,\sigma}(w(u), w(bu))}{u^2}=1+2\sum_{\ell=1}^\infty \rho(\ell).
\end{align}

\end{lem}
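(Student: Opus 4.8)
\textbf{Proof proposal for Lemma \ref{lem:sumlim1}.}

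The plan is to unfold the definition of $F_{\rho,\sigma}(w(u),w(bu))$ as a double integral and reduce it, via the change of variables $\tau=\lceil x\rceil-\lceil y\rceil$, to a weighted sum of the correlations $\rho(\tau)$. Writing $T:=w(u)$ and $T':=w(bu)$, Definition \ref{defn:F} gives
\begin{align*}
F_{\rho,\sigma}(T,T')=\int_0^{T}\int_0^{T'}\sigma(x)\sigma(y)\rho(\lceil x\rceil-\lceil y\rceil)\,dx\,dy.
\end{align*}
Because $s(T)^2=\int_0^T\sigma(x)^2\,dx=u^2$ by the definition of $w$ as the inverse of $s$, the leading diagonal term $\tau=0$ already contributes exactly $\int_0^T\sigma(x)^2\,dx=u^2$ from the region where $\lceil x\rceil=\lceil y\rceil$ (here $\rho(0)=1$), which explains the ``$1$'' in the target limit. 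First I would split the double integral according to the value of $\tau=\lceil x\rceil-\lceil y\rceil$, so that
\begin{align*}
F_{\rho,\sigma}(T,T')=\sum_{\tau}\rho(\tau)\,G_\tau(T,T'),\qquad G_\tau(T,T'):=\int\!\!\int_{\lceil x\rceil-\lceil y\rceil=\tau}\sigma(x)\sigma(y)\,dx\,dy,
\end{align*}
and then show that $G_\tau(T,T')/u^2\to 1$ for each fixed $\tau$, with the off-diagonal terms contributing, after accounting for positive and negative $\tau$, the factor $2\sum_{\ell\ge1}\rho(\ell)$.

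The heart of the argument is the asymptotic evaluation of $G_\tau(T,T')/u^2$ for fixed $\tau$. Since $\sigma$ is piecewise constant on unit intervals, $G_\tau$ is really a sum $\sum_{i}\sigma(i)\sigma(i-\tau)$ over the overlapping integer range; I would compare this to $\sum_i\sigma(i)^2=u^2+o(u^2)$ using hypothesis \eqref{eq:sigma}, which gives $\sigma(i-\tau)/\sigma(i)\to1$. The key quantitative input is that, because of \eqref{eq:sigma}, for any fixed $\tau$ one has $\sigma(i)\sigma(i-\tau)=\sigma(i)^2(1+o(1))$ uniformly as $i\to\infty$, so that the ratio of the shifted sum to the unshifted one tends to $1$; combined with $s(T)^2=u^2$ (and $s(T')^2=b^2u^2$, which only affects the diagonal block's upper index but not the leading asymptotics once one restricts to the overlapping range), this yields $G_\tau(T,T')/u^2\to1$ for each fixed $\tau\ge0$, and symmetrically for $\tau<0$.

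The main obstacle, and the step requiring genuine care, is interchanging the limit $u\to\infty$ with the summation over $\tau$: I must justify
\begin{align*}
\lim_{u\to\infty}\frac{1}{u^2}\sum_{\tau}\rho(|\tau|)G_\tau(T,T')=\sum_{\tau}\rho(|\tau|)\lim_{u\to\infty}\frac{G_\tau(T,T')}{u^2}=1+2\sum_{\ell=1}^\infty\rho(\ell).
\end{align*}
This is where summability \eqref{eq:summable} is essential. The plan is to produce a uniform dominating bound of the form $G_\tau(T,T')/u^2\le C$ for all $\tau$ and all large $u$ — which follows from Cauchy--Schwarz, since $\sum_i\sigma(i)\sigma(i-\tau)\le\big(\sum_i\sigma(i)^2\big)^{1/2}\big(\sum_i\sigma(i-\tau)^2\big)^{1/2}\le u^2(1+o(1))$ using \eqref{eq:sigma2} to control the shifted partial sums — and then invoke dominated convergence against the summable sequence $\rho(|\tau|)$. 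The finiteness of $\sum_\tau\rho(|\tau|)=1+2\sum_{\ell\ge1}\rho(\ell)$ provided by \eqref{eq:summable} is exactly what makes the dominated-convergence argument go through and simultaneously identifies the limiting constant. I expect the subtlety to lie in making the ``$1+o(1)$'' in the Cauchy--Schwarz bound genuinely uniform in $\tau$, for which I would fix a large cutoff $L$, treat $|\tau|\le L$ by the termwise limit and $|\tau|>L$ by the tail $\sum_{|\tau|>L}\rho(|\tau|)\to0$, i.e.\ a standard split-at-$L$ estimate.
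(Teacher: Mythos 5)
Your proposal matches the paper's proof in all essentials: both decompose $F_{\rho,\sigma}$ by the lag $\ell=\lceil x\rceil-\lceil y\rceil$, prove termwise convergence of each lag's contribution (using \eqref{eq:sigma} and \eqref{eq:sigma2}, including the fact that for $b>1$ the overlap range eventually covers all of $[0,w(u)]$), obtain a uniform Cauchy--Schwarz domination, and conclude by dominated convergence against the summable $\rho$. The only immaterial slip is that for $b>1$ and negative lags the Cauchy--Schwarz bound is of order $bu^2$ rather than $u^2(1+o(1))$ — the shifted partial sum runs up to $w(bu)$, not $w(u)$ — but any fixed constant suffices for the dominated convergence step, and $bu^2$ is exactly the bound the paper records.
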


\begin{defn}
For two positive functions $f$ and $g$ depending on arguments $x_1$, $\ldots$, $x_n$, $z_1$, $\ldots$, $z_m$, the expression $f\lesssim_{z_1,\ldots,z_n} g$ means the existence of a finite positive constant $C=C(z_1,\ldots,z_n)$, such that $f\le C g$ for all $x_1,\ldots,x_n$ (possibly in a certain range). In particular, the notation $f\lesssim g$ implies the existence of a universal constant $C$ such that $f\le C g$ for all arguments (possibly in a certain range).
\end{defn}

\begin{lem}\label{lem:tight_1}
Let $\rho$ satisfy \eqref{eq:summable}. Then under no assumptions on $\sigma$ we have for any $b\geq 1$

\begin{align}\label{eq:tight_1}
\int_{w(u)}^{w(bu)}\int_{0}^{w(bu)}\sigma(x)\sigma(y)\rho(\lceil x \rceil-\lceil y\rceil)dydx\lesssim_\rho bu^2\sqrt{b^2-1}.
\end{align}

\end{lem}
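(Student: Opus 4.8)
The plan is to symmetrize the integrand by Cauchy--Schwarz, reduce to one-dimensional integrals of $\rho$ that are uniformly bounded by the total correlation mass, and finally evaluate the remaining integrals of $\sigma^2$ using the definitions of $s(\cdot)$ and $w(\cdot)$. Writing $I$ for the left-hand side of \eqref{eq:tight_1} and using that $\rho\ge 0$, I would factor
$$\sigma(x)\sigma(y)\rho(\lceil x\rceil-\lceil y\rceil)=\Big(\sigma(x)\sqrt{\rho(\lceil x\rceil-\lceil y\rceil)}\Big)\Big(\sigma(y)\sqrt{\rho(\lceil x\rceil-\lceil y\rceil)}\Big)$$
and apply the Cauchy--Schwarz inequality to the double integral to obtain $I\le\sqrt{A}\,\sqrt{B}$, where
$$A:=\int_{w(u)}^{w(bu)}\sigma(x)^2\Big(\int_0^{w(bu)}\rho(\lceil x\rceil-\lceil y\rceil)\,dy\Big)dx,\qquad B:=\int_0^{w(bu)}\sigma(y)^2\Big(\int_{w(u)}^{w(bu)}\rho(\lceil x\rceil-\lceil y\rceil)\,dx\Big)dy.$$

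The key step is the uniform bound on the inner one-dimensional $\rho$-integrals. Since $\sigma$ is piecewise constant on unit intervals and $\rho$ is nonnegative and symmetric, for any fixed $x$ I have $\int_0^\infty\rho(\lceil x\rceil-\lceil y\rceil)\,dy=\sum_{j\ge 1}\rho(\lceil x\rceil-j)\le\sum_{k\in\Z}\rho(k)=1+2\sum_{k=1}^\infty\rho(k)=:\Sigma_\rho$, which is finite precisely by the summability assumption \eqref{eq:summable}; the identical argument gives the same bound $\Sigma_\rho$ for the inner integral in $x$. This is the only place the hypothesis on $\rho$ is used, and the bound is uniform in the outer variable and independent of $u,b$, so it does not spoil the scaling in the final estimate.

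With these bounds in hand, the remaining integrals are of $\sigma^2$ alone and are evaluated directly from $s(t)^2=\int_0^t\sigma(x)^2\,dx$ together with $s(w(u))=u$. Indeed $\int_0^{w(bu)}\sigma(y)^2\,dy=s(w(bu))^2=b^2u^2$ and $\int_{w(u)}^{w(bu)}\sigma(x)^2\,dx=s(w(bu))^2-s(w(u))^2=b^2u^2-u^2=u^2(b^2-1)$. Hence $A\le\Sigma_\rho\,u^2(b^2-1)$ and $B\le\Sigma_\rho\,b^2u^2$, and therefore $I\le\sqrt{A}\,\sqrt{B}\le\Sigma_\rho\,\sqrt{u^2(b^2-1)}\,\sqrt{b^2u^2}=\Sigma_\rho\,bu^2\sqrt{b^2-1}$, which is exactly \eqref{eq:tight_1} with implicit constant $\Sigma_\rho$ depending only on $\rho$.

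The main (and essentially only) obstacle is choosing the right symmetrization: a naive bound such as $\sigma(x)\sigma(y)\le\tfrac12(\sigma(x)^2+\sigma(y)^2)$ would mix the two different variable ranges and fail to reproduce the sharp factor $\sqrt{b^2-1}$ coming from the thin strip $x\in[w(u),w(bu)]$. The Cauchy--Schwarz splitting keeps the strip length $u^2(b^2-1)$ attached to the $x$-integration and the full length $b^2u^2$ attached to the $y$-integration, which together yield the claimed product; everything else is routine bookkeeping with the definitions of $s$ and $w$.
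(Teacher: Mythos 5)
Your proposal is correct and follows essentially the same route as the paper: the identical Cauchy--Schwarz symmetrization $I\le\sqrt{A}\sqrt{B}$, the uniform bound of the inner $\rho$-integrals by the summable mass $1+2\sum_{k\ge1}\rho(k)$, and the evaluation $\int_{w(u)}^{w(bu)}\sigma^2=u^2(b^2-1)$, $\int_0^{w(bu)}\sigma^2=b^2u^2$ via $s(w(v))=v$. Your write-up is slightly more explicit about why the inner integrals are bounded (reducing them to the sum $\sum_{j}\rho(\lceil x\rceil-j)$), but the argument is the same.
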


\begin{lem}\label{lem:2020_1} Assume $\rho(.)$ satisfies (\ref{eq:summable}) and $\sigma(.)$ satifies (\ref{eq:sigma2}), (\ref{eq:sigma}), and (\ref{eq:log_concave}).
With $$\tilde{g}_u(\tau):=\frac{F_{\rho,\sigma}(w(u), w(e^\tau u))}{u^2 e^\tau}$$ we have
$$\lim_{L\to\infty} \limsup_{u\to\infty}\int_L^\infty \tilde{g}_u(\tau)d\tau=0.$$
\end{lem}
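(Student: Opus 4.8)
The plan is to reduce everything to the asymptotics already recorded in Lemma~\ref{lem:sumlim1}, by isolating a ``diagonal'' contribution that produces the expected $e^{-\tau}$ decay and then showing that the remaining ``cross'' contribution is negligible after taking $\limsup_u$. Substituting $b=e^\tau$ (so that $d\tau=db/b$ and $e^\tau=b$), I would first rewrite
\be
\int_L^\infty \tilde g_u(\tau)\,d\tau=\int_{e^L}^\infty \frac{F_{\rho,\sigma}(w(u),w(bu))}{u^2\,b^2}\,db.
\ee
Writing $N:=w(u)$ and splitting the inner integration domain of $F_{\rho,\sigma}$ at $N$, I decompose
\be
F_{\rho,\sigma}(w(u),w(bu))=F_{\rho,\sigma}(w(u),w(u))+G(u,b),\qquad G(u,b):=\int_0^{w(u)}\!\!\int_{w(u)}^{w(bu)}\sigma(x)\sigma(y)\rho(\lceil x\rceil-\lceil y\rceil)\,dy\,dx.
\ee
Since $\rho$ is non negative and summable, the elementary estimate $\sigma_i\sigma_{i+k}\le\tfrac12(\sigma_i^2+\sigma_{i+k}^2)$ gives $F_{\rho,\sigma}(w(u),w(u))\le (1+2R)\,s(w(u))^2=(1+2R)u^2$ with $R:=\sum_{\ell\ge1}\rho(\ell)$, so the diagonal piece contributes at most $(1+2R)e^{-L}$ to the integral, which is the main term and already tends to $0$ as $L\to\infty$.

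The crux is the cross term $G$. A direct Cauchy--Schwarz bound (or Lemma~\ref{lem:tight_1}) only yields $G(u,b)\lesssim_\rho u^2\sqrt{b^2-1}$, hence $\tilde g_u(\tau)\lesssim_\rho 1$, which is \emph{not} integrable on $[L,\infty)$; this is the main obstacle and forces a finer argument. Instead I would interchange the $b$-integral with the non negative sum defining $G$ (Tonelli), viewing the $y$-integration as a sum over $j=\lceil y\rceil$. A single index $j>N$ enters $G(u,b)$ exactly when $w(bu)\ge j$, i.e.\ when $b\ge s(j)/u$, and $\int_{\max(e^L,s(j)/u)}^\infty b^{-2}\,db=\min(e^{-L},u/s(j))$. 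Splitting according to whether $N<j\le w(e^Lu)$ (where this weight equals $e^{-L}$) or $j>w(e^Lu)$ (where it equals $u/s(j)$), the cross contribution becomes $\mathrm{I}+\mathrm{II}$, where $\mathrm{I}:=e^{-L}G(u,e^L)/u^2$ and $\mathrm{II}:=\tfrac1u\sum_{i\le N}\sigma(i)\sum_{j>w(e^Lu)}\tfrac{\sigma(j)}{s(j)}\rho(j-i)$.

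Term~$\mathrm{I}$ vanishes in the limit $u\to\infty$: applying Lemma~\ref{lem:sumlim1} with $b=e^L$ and with $b=1$, both $F_{\rho,\sigma}(w(u),w(e^Lu))/u^2$ and $F_{\rho,\sigma}(w(u),w(u))/u^2$ converge to $1+2R$, so $G(u,e^L)/u^2\to0$ and $\limsup_u\mathrm{I}=0$ for each fixed $L$. For Term~$\mathrm{II}$ I would use \eqref{eq:log_concave}: since every $j>w(e^Lu)>N\ge i$, it gives $\sigma(j)/s(j)\le C\,\sigma(i)/s(i)$, whence the inner sum is at most $C\,\frac{\sigma(i)}{s(i)}\sum_{k>w(e^Lu)-w(u)}\rho(k)=:C\,\frac{\sigma(i)}{s(i)}R_L(u)$. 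Combining this with the telescoping bound $\sum_{i\le N}\sigma(i)^2/s(i)\le 2s(N)=2u$ (valid because $\sigma(i)^2=s(i)^2-s(i-1)^2$ and $s(i)+s(i-1)\le 2s(i)$) yields $\mathrm{II}\le 2C\,R_L(u)$.

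It remains to check $\limsup_u R_L(u)=0$, i.e.\ that the $\rho$-tail $R_L(u)=\sum_{k>w(e^Lu)-w(u)}\rho(k)$ is negligible; this reduces to $w(e^Lu)-w(u)\to\infty$ as $u\to\infty$. I would deduce the latter from \eqref{eq:sigma2} and \eqref{eq:sigma}, which imply $\sigma(n)/s(n)\to0$ and $s(n+\ell)/s(n)\to1$ for each fixed $\ell$; hence $s(w(u)+\ell)=u(1+o(1))<e^Lu$ for large $u$, forcing $w(e^Lu)>w(u)+\ell$ for every fixed $\ell$. Consequently $R_L(u)\to0$ and $\limsup_u\mathrm{II}=0$ for each fixed $L$. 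Assembling the three pieces gives $\limsup_{u\to\infty}\int_L^\infty \tilde g_u(\tau)\,d\tau\le(1+2R)e^{-L}$, and letting $L\to\infty$ finishes the proof. The only genuinely delicate point is the one flagged above: moment bounds on the cross covariance are too weak by a full factor of $b$, so one must instead exploit the near-diagonal cancellation furnished by Lemma~\ref{lem:sumlim1} in the regime $j\le w(e^Lu)$, together with the tail smallness of $\rho$ (controlled via \eqref{eq:sigma} and \eqref{eq:log_concave}) in the far regime $j>w(e^Lu)$.
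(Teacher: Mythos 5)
Your proof is correct, and it is organized genuinely differently from the paper's. The shared core is the Fubini/Tonelli step --- integrating out the time variable to produce the weight $\min(e^{-L},u/s(y))$ --- together with the use of \eqref{eq:log_concave} and the telescoping bound $\sum_{i\le w(u)}\sigma(i)^2/s(i)\le 2u$ in the far region; the paper's proof of Lemma \ref{lem:2020_1} does exactly this too (there the weight appears as $1/(u\,s(y))$ after extending the $\tau$-integral to $[0,\infty)$). Where you diverge is in the decomposition and the source of smallness. The paper splits the off-diagonal region according to $|x-y|\le M$ versus $|x-y|>M$, factorizes $\rho=\tilde\rho\, h$ with $\tilde\rho$ summable and $h\to 0$, and extracts smallness from $\sup_{i\ge M-2}h(i)$, at the price of a double limit ($L\to\infty$ followed by $M\to\infty$). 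You instead split according to the location of $y$ relative to $w(e^Lu)$: in the near regime you exploit the cancellation encoded in Lemma \ref{lem:sumlim1} --- since $F_{\rho,\sigma}(w(u),w(bu))/u^2\to 1+2\sum_{\ell}\rho(\ell)$ with a limit independent of $b$, the strip contribution $G(u,e^L)$ is $o(u^2)$ --- and in the far regime you use only the tail of $\rho$ beyond lag $w(e^Lu)-w(u)\to\infty$ (a fact established inside the proof of Lemma \ref{lem:sumlim1}, which you correctly re-derive from \eqref{eq:sigma2} and \eqref{eq:sigma}). This buys you a single limit in $L$, a clean quantitative bound $(1+2R)e^{-L}$, and avoids the $\rho=\tilde\rho\,h$ factorization, at the cost of invoking Lemma \ref{lem:sumlim1} (legitimate: that lemma is proved independently of Lemma \ref{lem:2020_1}, so there is no circularity). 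Two harmless imprecisions you may wish to tidy: the diagonal bound is really $(1+2R)\,s(\lceil w(u)\rceil)^2=(1+2R)(1+o(1))u^2$ rather than $(1+2R)u^2$ (the overshoot is the last partial cell, negligible since $\sigma(n)/s(n)\to 0$), and in Term~II replacing $s(y)$ by $s(j)$ for $y\in[j-1,j]$ costs a factor $s(j)/s(j-1)=1+o(1)$; both disappear under the $\limsup_{u\to\infty}$.
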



\begin{proof}[ of Theorem \ref{thm:summble}]
{\it Step 1: Reduction to convergence of the continuous-time interpolation.}

To begin, define a Gaussian process $(0,\infty)$ by setting
$X(u):=\int_0^u \sigma(\lceil v\rceil)\xi_{\lceil v\rceil}dv$, and note that $X(u)=S_u$ for all positive integers $u$.
Thus $X(.)$ is just the linear interpolation of the partial sums $\{S_\ell\}_{\ell\ge 1}$, and consequently 
$$\P(\max_{1\le \ell\le n}S_\ell<0)=\P(\sup_{u\in [0,n]}X(u)<0).$$ 
For any positive integer $k$ we define a Gaussian process on $[0,\infty)$ by setting  
$X_k(t):=k^{-1}X(w(ke^t))$, $t\ge 0$, and claim that
\begin{align}\label{eq:dm1}
\lim_{k,T\to\infty}\frac{1}{T}\log \P(\sup_{t\in [0,T]}X_k(t)<0)=-1.
\end{align}
Given \eqref{eq:dm1}, using Slepian's Lemma along with non-negativity of $\rho(.),\sigma(.)$, we get
\begin{align*}
\P(\sup_{u\in [0,n]}X(u)<0)&=\P(\sup_{u\in [0,w(k)]}X(u)<0,\sup_{t\in[0,\log(s(n)/k)]}X_k(t)<0)\\
&\ge \P(\sup_{u\in [0,w(k)]}X(u)<0)\cdot \P(\sup_{t\in [0,\log(s(n)/k)]}X_k(t)<0)\\
&=\P(\max_{1\le \ell \le w(k)}S_\ell<0)\cdot \P( \sup_{t\in [0,\log(s(n)/k)]}X_k(t)<0)\\
&\ge \Big(\frac{1}{2}\Big)^{w(k)}\cdot \P(\sup_{t\in [0,\log(s(n)/k)]}X_k(t)<0),
\end{align*}
which on taking limits as $n\to\infty$ gives 
\begin{align*}
&\liminf_{n\to\infty}\frac{1}{\log s(n)}\log \P(\sup_{t\in [0,n]}X(t)<0)\\
&\ge \liminf_{n\to\infty}\frac{1}{\log s(n)}\log \P(\sup_{t\in [0,\log(s(n)/k)]}X_k(t)<0)\\
&=\liminf_{n\to\infty}\frac{1}{ s'(n)}\log \P(\sup_{t\in [0,s'(n)]}X_k(t)<0),
\end{align*}
where $s'(n)=\log(s(n)/k)$ tends to $+\infty$ as $n\to\infty$ with $k$ fixed. On letting $k\to\infty$ on both sides of the above equation, invoking \eqref{eq:dm1} gives
$$\liminf_{n\to\infty}\frac{1}{\log s(n)}\log \P(\sup_{t\in [0,n]}X(t)<0)\ge -1,$$
thus giving the lower bound of the theorem.
The corresponding upper bound follows on noting that
$$\P(\sup_{t\in[0,n]}X(t)<0)\le \P(\sup_{t\in [0,\log(s(n)/k)]}X_k(t)<0),$$
and invoking \eqref{eq:dm1} again.
\\

{\it Step 2: Verification of \eqref{eq:dm1}.} To this effect, setting $u:=ke^s$, for $s\le t$ we have
\begin{align*}
\lim\limits_{k\to\infty}\cov(X_k(s),X_k(t))=&\lim\limits_{k\to\infty}\frac{F_{\rho,\sigma}( w(k e^s),w(k e^t) )}{k^2}\\
=&e^{2s}\lim\limits_{u\to\infty}\frac{F_{\rho,\sigma}(w(u) , w(u e^{t-s})) }{{u}^2}\\
=& e^{2s}\Big(1+2\sum_{\ell=1}^\infty \rho(\ell)\Big),
\end{align*}
where the last equality uses \eqref{eq:limit_sum1}. This gives
$$\lim_{k\to\infty}\corr(X_k(s),X_k(t))=\frac{e^{2s}}{e^{s} e^{t}}=e^{-|s-t|},$$
which is the correlation function of a scaled Ornstein Uhlenbeck process with persistence exponent $1$, by Remark \ref{rem:orn}. From this, the desired conclusion then follows on using Theorem~\ref{lem:DM2}, where we need to verify the conditions of the lemma. To this effect, first note that \eqref{eq:cont_1} holds for the Ornstein Uhlenbeck process, by Remark \ref{rem:orn}. Proceeding to verify \eqref{eq:DM3}, for $s\ge 0,\tau\in [0,1]$ setting $u:=ke^s\in [1,\infty)$ we have
\begin{align*}
1-\corr\Big(X_k(s),X_k({s+\tau})\Big)& =1-\frac{F_{\rho,\sigma}(w(u),w(ue^\tau))}{\sqrt{F_{\rho,\sigma}(w(u),w(u))}\sqrt{F_{\rho,\sigma}(w(ue^\tau), w(ue^\tau))}}\\
&\le 1-\frac{F_{\rho,\sigma}(w(u),w(ue^\tau))}{F_{\rho,\sigma}(w(ue^\tau), w(ue^\tau))}\\
&=\frac{F_{\rho,\sigma}(w(ue^\tau),w(ue^\tau))-F_{\rho,\sigma}(w(u),w(ue^\tau))}{F_{\rho,\sigma}(w(ue^\tau),w(ue^\tau))}\\
& \lesssim_{\rho,\sigma} e^{-\tau} \sqrt{e^{2\tau}-1},
\end{align*}
where the last inequality uses \eqref{eq:tight_1} along with \eqref{eq:limit_sum1}.
This verifies \eqref{eq:DM3}. It thus remains to verify \eqref{eq:DM1}, 
for which setting $u=ke^s$  we have
\begin{align*}
\corr(X_k(s),X_k(s+\tau))
&=\frac{F_{\rho,\sigma}(w( k e^s) , w(k e^{s+\tau} ))}{\sqrt{F_{\rho,\sigma}( w(k e^s) , w(k e^{s}) )}\sqrt{F_{\rho,\sigma}(w( k e^{s+\tau}) , w(k e^{s+\tau}) )}}\\
&=\frac{F_{\rho,\sigma}( w(u) , w(ue^\tau) )}{\sqrt{F_{\rho,\sigma}( w(u) , w(u) )}\sqrt{F_{\rho,\sigma}( w(ue^\tau) , w(u e^\tau))}}\\
&\lesssim_{\rho,\sigma}\frac{F_{\rho,\sigma}(w(u),w(e^\tau u))}{e^\tau u^2}=\tilde{g}_u(\tau),
\end{align*}
where $\tilde{g}_u(.)$ is as in Lemma \ref{lem:2020_1}, and the last inequality uses \eqref{eq:limit_sum1}. To verify \eqref{eq:DM1} it thus suffices to show that
\begin{align}\label{eq:DM1.5}
\lim_{L\to\infty}\limsup_{u\to\infty}\sum_{i=L}^\infty \tilde{g}_u(i/\ell)=0
\end{align}
for every positive integer $\ell$. To this effect, for $\tau\in [i/\ell, (i+1)/\ell]$ we have
\begin{align*}
\tilde{g}_u(i/\ell)&=\frac{F_{\rho,\sigma}( w(u) , w(ue^{i/\ell}) )}{e^{i/\ell} u^2}\\
&=e^{-i/\ell} u^{-2}\int_0^{w(u)}\int_0^{w(ue^{i/\ell})} \sigma(\lceil x\rceil)\sigma(\lceil y\rceil)\rho(\lceil x\rceil-\lceil y\rceil)dxdy\\
&\le e^{1/\ell-\tau} u^{-2}\int_0^{w(u)}\int_0^{w(ue^{\tau})} \sigma(\lceil x\rceil)\sigma(\lceil y\rceil)\rho(\lceil x\rceil-\lceil y\rceil)dxdy\\
&= e^{1/\ell} \tilde{g}_u(\tau).
\end{align*}
This immediately gives
\begin{align*}
\sum_{i=L}^\infty \tilde{g}_u(i/\ell)\le \ell e^{1/\ell}\sum_{i=L}^\infty \int_{i/\ell}^{(i+1)/\ell} \tilde{g}_u(\tau)d\tau= \ell e^{1/\ell}\int_{L/\ell}^\infty \tilde{g}_u(\tau)d\tau,
\end{align*}
from which \eqref{eq:DM1.5} follows on using 
Lemma \ref{lem:2020_1}. This completes the proof of the theorem.
\end{proof}

\begin{proof}[ of Lemma \ref{lem:sumlim1}] To begin note that
\begin{eqnarray}
\notag F_{\rho,\sigma}(w(u),w(bu))&\le &F_{\rho,\sigma}(\lceil w(u)\rceil, \lceil w(bu) \rceil)\\
&= &\notag\sum_{i=1}^{\lceil w(u)\rceil}\minorchanges{\sigma^2(i)}\ +2\sum_{\ell=1}^{\lceil w(u)\rceil -1}\rho(\ell)\sum_{i=1}^{\min(\lceil w(u)\rceil,\lceil w(bu)\rceil-\ell)}\sigma(i)\sigma(i+\ell)\\
\notag&&+\sum_{\ell=\lceil w(u)\rceil}^{\lceil w(bu)\rceil-1}\ \rho(\ell) \sum_{i=1}^{\min(\lceil w(u)\rceil,\lceil w(bu)\rceil-\ell)}\sigma(i)\sigma(i+\ell)\\
%
&=:&\sum_{\ell=0}^\infty \rho(\ell) \beta_{\ell,b}(u)\label{eq:sumlim2},
\end{eqnarray}
where 
$$
\beta_{\ell,b}(u):=\begin{cases} \sum_{i=1}^{\lceil w(u)\rceil} \sigma^2(i) & \text{ if }\ell=0
\\
2\sum_{i=1}^{\min(\lceil w(u)\rceil,\lceil w(bu)\rceil-\ell)}\sigma(i)\sigma(i+\ell)&\text{ if }1\le \ell\le \lceil w(u)\rceil-1\\
\sum_{i=1}^{\min(\lceil w(u)\rceil,\lceil w(bu)\rceil-\ell)}\sigma(i)\sigma(i+\ell)&\text{ if }\lceil w(u)\rceil \le \ell \le \lceil w(bu)\rceil -1\\
0&\text{ if }\ell \ge \lceil w(bu)\rceil.
\end{cases}
$$

Now for any $b>1$ we have $w(bu)-w(u)\rightarrow \infty$, as $w(bu)-w(u)\le K$ for some $K$ fixed along a subsequence in $u$ diverging to $+\infty$ implies
$b=\frac{bu}{u}=\frac{s(w(bu))}{s(w(u))}\le \frac{s(w(u)+K)}{s(w(u))}$, the right hand side of which converges to $1$ along the same subsequence, using \eqref{eq:sigma}. Thus for all $u$ large enough we have $w(bu)-\ell \ge w(u)$, and so
\begin{align*}
\frac{1}{u^2}\beta_{\ell,b}(u)=\frac{2}{u^2}\sum_{i=1}^{\lceil w(u)\rceil}\sigma(i)\sigma(i+\ell),
\end{align*}
which converges to $2$ as $u\to\infty$, 
invoking \eqref{eq:sigma} and \eqref{eq:sigma2}. Also, for any $u>0, \ell\ge 1$ we have, by the Cauchy-Schwarz inequality,
\begin{align*}
\notag \beta_{\ell,b}(u)\le 2\sum_{i=1}^{\min(\lceil w(u)\rceil,\lceil w(bu)\rceil-\ell)}\sigma(i)\sigma(i+\ell)
\le 2s(\lceil w(u)\rceil) s(\lceil w(bu)\rceil)\lesssim bu^2.
\end{align*}
which along with the Dominated Convergence theorem gives
$$\limsup_{u\to\infty}\frac{F(w(u),w(bu))}{u^2}\le 1+2\sum_{\ell=1}^\infty \rho(\ell),$$ thus giving the upper bound in \eqref{eq:limit_sum1}. The corresponding lower bound follows on noting that
\begin{eqnarray*}
F_{\rho,\sigma}(w(u),w(bu))
&\ge & F_{\rho,\sigma}(\lceil w(u)\rceil-1, \lceil w(u) \rceil-1)\\
&=&\sum_{i,j=1}^{\lceil w(u)\rceil-1}\sigma(i)\sigma(j)\rho(i-j)\\
&=&\sum_{i=1}^{\lceil w(u)\rceil-1}\sigma^2(i)+\sum_{\ell=1}^{\lceil w(u)\rceil -2}\rho(\ell)\sum_{i=1}^{\lceil w(u)\rceil -\ell-1}\sigma(i)\sigma(i+\ell),
\end{eqnarray*}
and using a similar argument as in the upper bound. 
\end{proof}


\begin{proof}[ of Lemma \ref{lem:tight_1}]
Using the Cauchy- Schwarz inequality, the left hand side of \eqref{eq:tight_1} can be bounded as follows:

\minorchanges{
\begin{align*}
&\int_{w(u)}^{w(bu)} \int_{0}^{w(bu)} \sigma(x)\sigma(y)\rho(\lceil x \rceil-\lceil y\rceil)dydx\\
&\le \sqrt{\int_{w(u)}^{w(bu)}\int_0^{w(bu)}\sigma^2(x)\rho(\lceil x \rceil-\lceil y\rceil)dydx} \sqrt{\int_{w(u)}^{w(bu)}\int_0^{w(bu)}\sigma^2(y)\rho(\lceil x \rceil-\lceil y\rceil)dydx}\\
&= \sqrt{\int_{w(u)}^{w(bu)}\sigma^2(x)\Big[ \int_0^{w(bu)}\rho(\lceil x \rceil-\lceil y\rceil)dy\Big]dx} \sqrt{\int_0^{w(bu)}\sigma^2(y)\Big[\int_{w(u)}^{w(bu)}\rho(\lceil x \rceil-\lceil y\rceil)dx\Big]dy}\\
& \lesssim_\rho \sqrt{\int_{w(u)}^{w(bu)}\sigma^2(x)dx} \sqrt{\int_0^{w(bu)}\sigma^2(y)dy},
\end{align*}}
where the last line uses the fact that $\rho$ is integrable. The last line equals $$\sqrt{s(w(bu))^2-s(w(u))^2} s(w(bu))=bu^2\sqrt{b^2-1},$$ as desired.
%
%
\end{proof}

\begin{proof}[ of Lemma \ref{lem:2020_1}]
{\it Step 1: We first treat the integral $\int_0^{w(u)} \int_0^{w(u)}$.} By the Cauchy-Schwarz inequality,
\begin{eqnarray}
&& \int_L^\infty \frac{e^{-\tau}}{u^2}\, \int_0^{w(u)} \int_0^{w(u)} \sigma(x)\sigma(y)\rho(\lceil y\rceil - \lceil x\rceil ) d y d x d \tau
 \notag \\
&\le&
 \int_L^\infty \frac{e^{-\tau}}{u^2}\, \sqrt{\int_0^{w(u)} \int_0^{w(u)} \minorchanges{\sigma^2(x)} \rho(\lceil y\rceil - \lceil x\rceil ) d y d x} 
\notag  \\
&& \cdot ~\sqrt{\int_0^{w(u)} \int_0^{w(u)} \minorchanges{\sigma^2(y)} \rho(\lceil y\rceil - \lceil x\rceil ) d y d x}  d \tau
\notag \\
&=&
\frac{e^{-L}}{u^2}\, \int_0^{w(u)} \minorchanges{\sigma^2(x)} \int_0^{w(u)}  \rho(\lceil y\rceil - \lceil x\rceil ) d y d x 
 \notag \\ 
&\lesssim_\rho&
\frac{e^{-L}}{u^2}\, \int_0^{w(u)} \minorchanges{\sigma^2(x)} d x = \frac{e^{-L}}{u^2}\, s(w(u))^2  = e^{-L}. \label{eqn:20-02-27-unimporantpartofint}
  \end{eqnarray}

{\it Step 2: Main part of the integral.} Fix $M\in\N$. Then we can write the remaining part in $\int_L^\infty \tilde{g}_u(\tau)d\tau$ as 
\begin{align}
\notag &\int_{L}^\infty \frac{e^{-\tau}}{u^2}\int_0^{w(u)}\int_{w(u)}^{w(e^\tau u)}\sigma(x)\sigma(y) \rho(\lceil y\rceil-\lceil x\rceil) {\bf 1}\{|x-y|> M\}dy dxd\tau
\\
\label{eq:b_2020}+&\int_{L}^\infty \frac{e^{-\tau}}{u^2}\int_0^{w(u)}\int_{w(u)}^{w(e^\tau u)}\sigma(x)\sigma(y) \rho(\lceil y\rceil-\lceil x\rceil) {\bf 1}\{|x-y|\le M\}dy dx d \tau.
\end{align}
Since $\rho$ satisfies \eqref{eq:summable}, we can write $\rho(i)=\tilde{\rho}(i)h(i)$, where both $\tilde{\rho}, h$ are non-negative functions satisfying $\sum_{i=1}^\infty  \tilde{\rho}(i)<\infty$ and $\lim_{i\to\infty}h(i)=0$. Using this, the first term in  \eqref{eq:b_2020} can be bounded as follows:
  \begin{align}
\notag &\int_{L}^\infty \frac{e^{-\tau}}{u^2}\int_0^{w(u)}\int_{w(u)}^{w(e^\tau u)}\sigma(x)\sigma(y) \rho(\lceil y\rceil-\lceil x\rceil) {\bf 1}\{|x-y|>M\}dy dx d \tau
\\
\notag&\le \sup_{i\ge M-2}h(i)\int_{0}^\infty \frac{e^{-\tau}}{u^2}\int_0^{w(u)}\int_{w(u)}^{w(e^\tau u)}\sigma(x)\sigma(y) \tilde{\rho}(\lceil y\rceil-\lceil x\rceil) dy dxd \tau\\
\notag&=\sup_{i\ge M-2}h(i)\int_0^{w(u)}\int_{w(u)}^{\infty}\sigma(x)\sigma(y) \tilde{\rho}(\lceil y\rceil-\lceil x\rceil) \int_{\log \frac{s(y)}{u}}^\infty \frac{e^{-\tau}}{u^2}d\tau dy dx
\\
\notag&=\sup_{i\ge M-2}h(i)\int_0^{w(u)}\int_{w(u)}^{\infty}\frac{\sigma(x)}{u}\frac{\sigma(y)}{s(y)} \tilde{\rho}(\lceil y\rceil-\lceil x\rceil)  dy dx
\\
\notag&\lesssim_\sigma\sup_{i\ge M-2}h(i)\int_0^{w(u)}\int_{w(u)}^{\infty}\frac{\minorchanges{\sigma^2(x)}}{u s(x)} \tilde{\rho}(\lceil y\rceil-\lceil x\rceil)  dy dx
\\
\label{eq:b_2020_1}&\lesssim_\rho \frac{1}{u}\sup_{i\ge M-2}h(i)\int_0^{w(u)} \frac{\minorchanges{\sigma^2(x)}}{s(x)}dx=2 \sup_{i\ge M-2}h(i),
\end{align}
where the inequalities in the last two lines use \eqref{eq:log_concave} (because $x\leq w(u)\leq y$) and summability of $\tilde{\rho}$, respectively. Proceeding to bound the second term in \eqref{eq:b_2020}, using \eqref{eq:sigma} we have
\begin{align}
\notag &\int_{L}^\infty \frac{e^{-\tau}}{u^2}\int_0^{w(u)}\int_0^{w(e^\tau u)}\sigma(x)\sigma(y) \rho(\lceil x\rceil-\lceil y\rceil) {\bf 1}\{|x-y|\le M\}dy dx d \tau\\
\notag&\lesssim_{\sigma,M} \int_{L}^\infty \frac{e^{-\tau}}{u^2}\int_0^{w(u)}\int_0^{w(e^\tau u)}\minorchanges{\sigma^2(x)} \rho(\lceil x\rceil-\lceil y\rceil) dy dx d \tau\\
\label{eq:b_2020_2}&\lesssim_{\rho} \int_{L}^\infty \frac{e^{-\tau}}{u^2}\int_0^{w(u)}\minorchanges{\sigma^2(x)}  dxd \tau=e^{-L}.
\end{align}
Combining \eqref{eq:b_2020_1} and \eqref{eq:b_2020_2} with \eqref{eqn:20-02-27-unimporantpartofint} and \eqref{eq:b_2020} we have
$$\int_L^\infty \tilde{g}_u(\tau)d\tau\le C(\sigma,\rho)\sup_{i\ge M-2}h(i)+C(\sigma,\rho,M) e^{-L},$$
which converges to $0$ on letting $L\to\infty$ followed by $M\to\infty$, on using the fact that $\lim_{i\to\infty}h(i)=0$.
\end{proof}


\begin{proof}[ of Proposition \ref{ppn:easy}]
Note that $q_n$ is a non-increasing sequence in $n$, and so it suffices to show that $\liminf_{n\to\infty}q_n>0$. To this effect, we first claim that $S_k\stackrel{a.s.}{\to}S_\infty:=\sum_{i=1}^\infty \sigma(i)\xi_i$, where the sum on the right hand side converges almost surely. It then follows that $S_\infty\sim \mathcal{N}(0,\sigma^2)$ for some $\sigma<\infty$, and so 
$$\P(\sup_{k\ge K}S_k<0)\ge \P(S_\infty<-1)-\P(\sup_{k\ge K}|S_k-S_\infty|>1).$$
On letting $K\to\infty$ we have
$$\lim_{K\to\infty}\P(\sup_{k\ge K}S_k<0)\ge \P(S_\infty<-1),$$
and so there exists $K\ge 1$ such that $\P(\sup_{k\ge K}S_k<0)\ge \P(S_\infty<-1)/2$. An application of Slepian's Lemma along with the fact that $\{S_\ell\}_{\ell\ge 1}$ has non-negative correlation gives for $n\geq K$
\begin{align*}
\P(\max_{1\le \ell \le n}S_\ell<0)\ge  \P(\max_{1\le \ell\le K-1}S_\ell<0) \P(\max_{K\le \ell\le n}S_\ell <0)
\ge \Big(\frac{1}{2}\Big)^{K} \P(S_\infty>-1),
\end{align*}
which is positive, and hence the proof is complete. It thus remains to verify the almost sure convergence of $\{S_k\}$. 
\fa{To this effect, define a Gaussian process $\{\tilde{S}_k\}_{k\ge 1}$ by setting $$\tilde{S}_k:=C\sum_{i=1}^k \sigma(i)\tilde{\xi}_i,\quad \{\tilde{\xi}_i\}_{i\ge 1}\stackrel{iid}{\sim}N(0,1),\quad C:=\sqrt{2\sum_{i=0}^\infty \rho(i)}.$$
Then, for any $m\ge n\ge 1$, Cauchy-Schwarz inequality gives
\begin{align*}
\E(S_m-S_n)^2=\sum_{i,j=n+1}^m\sigma(i)\sigma(j)\rho(i-j)
\le &\sqrt{\sum_{i,j=n+1}^m \sigma^2(i) \rho(i-j)} \sqrt{\sum_{i,j=n+1}^m \sigma^2(j)\rho(i-j)}\\
\le &C \sum_{i=n+1}^m \sigma^2(i)
=\E(\tilde{S}_m-\tilde{S}_{n})^2.
\end{align*}
An application of Markov's inequality, symmetry, and Sudakov-Fernique inequality (\cite[Thm 2.2.3]{AT}) then gives, for any $\delta>0$,
\begin{align}\label{eq:sudakov}
\notag\P(\max_{n\le k\le m}|S_k-S_m|\ge \delta)\le &\frac{1}{\delta}\E \max_{n\le k\le m}|S_k-S_m|\\
\le& \frac{2}{\delta} \E \max_{n\le k\le m}(S_k-S_m)\le \frac{2}{\delta}\E \max_{n\le k\le m}|\tilde{S}_k-\tilde{S}_m|.
\end{align}
Since $\{\tilde{S}_k\}_{k\ge 1}$ are sums of iid random variables, an application of Kolmogorov's Maximal inequality gives that for any $\lambda>0$ we have
\begin{align*}
\P(\max_{n\le k\le m}|\tilde{S}_k-\tilde{S}_m|\ge \lambda)\le \frac{C}{\lambda^2}\sum_{k=m+1}^n\sigma^2(k),
\end{align*}
which on integrating gives that for any $\varepsilon>0$,
\begin{align*}
\E \max_{n\le k\le m}|\tilde{S}_k-\tilde{S}_m|=\int_0^\infty \P(\max_{n\le k\le m}|\tilde{S}_k-\tilde{S}_m|\ge \lambda)d\lambda\le \varepsilon+\frac{C}{\varepsilon}\sum_{k=n+1}^m\sigma^2(k).
\end{align*}
Plugging the choice $\varepsilon=\sqrt{C\sum_{k=n+1}^m\sigma^2(k)}$ gives the bound
\begin{align}\label{eq:sudakov2}
\E \max_{n\le k\le m}|\tilde{S}_k-\tilde{S}_m|\le 2\sqrt{C\sum_{k=n+1}^m\sigma^2(k)}.
\end{align}
Combining \eqref{eq:sudakov} and \eqref{eq:sudakov2} gives
$$\P(\max_{n\le k,\ell\le m}|S_k-S_\ell|\ge  2\delta)\le \P(\max_{n\le k\le m}|S_k-S_m|\ge \delta)\le \frac{4}{\delta}\sqrt{C\sum_{k=n+1}^m\sigma^2(k)},$$
which on letting $m\to\infty$ along with continuity in probability gives
$$\P(\max_{k,\ell\ge n}|S_k-S_\ell |\ge 2\delta)\le \frac{4}{\delta} \sqrt{C\sum_{k=n+1}^\infty\sigma^2(k)}.$$
Since $\lim\limits_{n\to\infty}\max_{k,\ell\ge n}|S_k-S_\ell |\le \max_{k,\ell\ge n}|S_k-S_\ell|$, letting $n\to\infty$ this gives
$$\P(\lim_{n\to\infty}\max_{k,\ell\ge n}|S_k-S_\ell|\ge 2\delta)=0.$$
Since $\delta>0$ is arbitrary, we have $\lim_{n\to\infty}\max_{k,\ell\ge n}|S_k-S_\ell|\stackrel{a.s.}{=}0$, i.e.~the sequence $\{S_k\}_{k\ge 1}$ is Cauchy almost surely. This proves almost sure convergence, and hence completes the proof of the proposition.}



\end{proof}

\begin{proof}[ of Proposition \ref{ppn:exp}]
Fixing integers $\tau\ge 0,\ell\ge 1,k\ge 1$ we have
\begin{align*}
\cov(S_{\ell+k},S_{\ell+\tau+k})=&\sum_{i=1}^{\ell+k} \sum_{j=1}^{\ell+\tau+k}e^{(i+j)\alpha}\rho(i-j)\\
=&e^{(2\ell+2k+\tau)\alpha}\sum_{i=0}^{\ell+k-1} \sum_{j=0}^{\ell+\tau+k-1}e^{-(i+j)\alpha}\rho(j-i-\tau),
\end{align*}
which gives
\begin{align}\label{eq:DZ}
\lim_{k\to \infty}\sup_{\ell\ge 1,\tau\ge 0}\Big|\frac{\corr(S_{\ell+k},S_{\ell+\tau+k})}{D_\alpha(\tau)}-1\Big|=0,
\end{align}
and so $D_\alpha(.)$ is a valid correlation function. To get the desired conclusion, we apply Lemma \ref{lem:DM2d}. It thus suffices to verify \eqref{eq:DM1d}, which follows on noting that \eqref{eq:DZ} implies $\corr(S_{\ell+k},S_{\ell+\tau+k})\lesssim_{\alpha,\rho} D_\alpha(\tau)$, which is summable using \eqref{eq:summable}.
%
%
\end{proof}

\section{The non-summable case} \label{sec:proofnonsummable}

\subsection{Proof of Theorem \ref{thm:1}}

We begin by stating two lemmas which will be used to prove Theorem \ref{thm:1}. We defer the proofs of these two results to the appendix.

\begin{lem}\label{lem:f}  Let $H\in (1/2,1)$, $p+H>0$, and for $x\ge 1, \alpha\in \R$ let 
$\psi_\alpha(x):=\int_1^x y^{\alpha}dy$.
\begin{enumerate}
\item[(a)]
For any $b\ge N\ge 1$ we have
\begin{align}
\label{eq:f1}
f_{p,H}(1,b)\le &f_{p,H}(1,N)+\Big(1-\frac{1}{N}\Big)^{2H-2}  \frac{\psi_{p+2H-2}(b)}{p+1},\\
\label{eq:f2}
f_{p,H}(1,b)\ge &f_{p,H}(1,1)+\frac{\psi_{p+2H-2}(b)}{p+1}.
\end{align}

\item[(b)]
$$f_{p,H}(1,1)=\frac{\Gamma(p+1) \Gamma(2H-1)}{(p+H)\Gamma(p+2H)}=\frac{\Beta(p+1,2H-1)}{p+H}.$$
\end{enumerate}
\end{lem}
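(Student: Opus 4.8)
The statement to prove is Lemma~\ref{lem:f}, which concerns the function $f_{p,H}(1,b) = \int_0^1\int_0^b x^p y^p |x-y|^{2H-2}\,dx\,dy$. Part (a) gives two-sided bounds in terms of the simpler integral $\psi_{p+2H-2}(b) = \int_1^b y^{p+2H-2}\,dy$, and part (b) evaluates the diagonal value $f_{p,H}(1,1)$ in closed form via a Beta integral.

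\begin{proof}[ of Lemma \ref{lem:f}]
\textbf{Strategy.} The key idea for part (a) is to split the outer $y$-integral at the value $1$ (respectively at $N$), using the defining inequality $f_{p,H}(1,b) = f_{p,H}(1,1) + \int_1^b\int_0^1 x^p y^p |x-y|^{2H-2}\,dx\,dy$ and then to estimate the inner $x$-integral $\int_0^1 x^p |x-y|^{2H-2}\,dx$ for $y$ in the relevant range. Since $y \geq 1 \geq x$ throughout the second piece, we have $|x-y| = y - x$, and the goal is to compare this with $y$ alone so that the inner integral factorizes.

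\textbf{Part (a), upper bound \eqref{eq:f1}.} First I would write $f_{p,H}(1,b) = f_{p,H}(1,N) + \int_N^b y^p \big(\int_0^1 x^p (y-x)^{2H-2}\,dx\big)\,dy$ for $b \geq N \geq 1$. For $y \geq N$ and $x \in [0,1]$, since $2H-2 < 0$ the factor $(y-x)^{2H-2}$ is increasing in $x$, so $(y-x)^{2H-2} \leq (y-1)^{2H-2}$. Writing $(y-1)^{2H-2} = y^{2H-2}(1 - 1/y)^{2H-2} \leq y^{2H-2}(1 - 1/N)^{2H-2}$ (again using $2H-2<0$ and $y \geq N$), the inner integral is bounded by $(1-1/N)^{2H-2} y^{2H-2}\int_0^1 x^p\,dx = (1-1/N)^{2H-2} y^{2H-2}/(p+1)$. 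Integrating the remaining $y^p \cdot y^{2H-2} = y^{p+2H-2}$ over $[N,b]$ gives at most $\psi_{p+2H-2}(b)/(p+1)$ times the constant (extending the lower limit from $N$ down to $1$ only enlarges $\psi$), yielding \eqref{eq:f1}.

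\textbf{Part (a), lower bound \eqref{eq:f2}.} Here I would instead split at $1$: $f_{p,H}(1,b) = f_{p,H}(1,1) + \int_1^b y^p\big(\int_0^1 x^p (y-x)^{2H-2}\,dx\big)\,dy$. For the lower bound, I use $(y-x)^{2H-2} \geq y^{2H-2}$ for $x \geq 0$ (since $y - x \leq y$ and the exponent is negative), so the inner integral is at least $y^{2H-2}/(p+1)$, and integrating $y^{p+2H-2}$ over $[1,b]$ gives exactly $\psi_{p+2H-2}(b)/(p+1)$.

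\textbf{Part (b).} For the closed form of $f_{p,H}(1,1) = \int_0^1\int_0^1 x^p y^p |x-y|^{2H-2}\,dx\,dy$, I would recognize this as a special case ($n=2$, or a Selberg-type evaluation) already invoked after Definition~\ref{def:smallf} via \cite{FW}. Concretely, one can symmetrize to $2\int_0^1\int_0^y x^p y^p (y-x)^{2H-2}\,dx\,dy$, substitute $x = yt$ in the inner integral to get $2\int_0^1 y^{2p+2H-1}\,dy \int_0^1 t^p (1-t)^{2H-2}\,dt = \frac{2}{2p+2H}B(p+1, 2H-1)$, which equals $\frac{B(p+1,2H-1)}{p+H}$. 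Rewriting the Beta function via Gamma functions gives $\frac{\Gamma(p+1)\Gamma(2H-1)}{(p+H)\Gamma(p+2H)}$, matching the claim.

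\textbf{Main obstacle.} The routine calculations are the inner-integral estimates; the only genuinely delicate point is in the upper bound \eqref{eq:f1}, namely uniformly controlling $(y-x)^{2H-2}$ by a constant multiple of $y^{2H-2}$ when $y$ is allowed to be close to its lower cutoff. The factor $(1-1/N)^{2H-2}$ is precisely what tracks this loss, and one must keep $N \geq 1$ (so the substitution is valid) while noting the bound degenerates as $N \downarrow 1$ — this is why the statement carries the free parameter $N$, letting the later application choose $N$ large to make the constant close to $1$. Verifying convergence of the $t$-integral $\int_0^1 t^p(1-t)^{2H-2}\,dt$ requires $p > -1$ and $2H-1 > 0$, both guaranteed by the hypotheses $H \in (1/2,1)$ and $p + H > 0$ together with the Selberg/Beta convergence already noted in the text.
\end{proof}
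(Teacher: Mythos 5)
Your proof is correct. For part (a) your argument coincides with the paper's: the same split of the $y$-integral at $N$ (for \eqref{eq:f1}) and at $1$ (for \eqref{eq:f2}), the same pointwise bounds $(y-x)^{2H-2}\le \big(1-\tfrac 1N\big)^{2H-2}y^{2H-2}$ on $[0,1]\times[N,b]$ and $(y-x)^{2H-2}\ge y^{2H-2}$ for $0\le x\le 1\le y$, followed by evaluating (and, in the upper bound, enlarging the domain of) the resulting elementary integrals. The only divergence is part (b): the paper disposes of it in one line by citing Selberg's integral formula (\cite[(1.1)]{FW} with $\alpha=p+1$, $\beta=1$, $\gamma=H-1$), whereas you evaluate the double integral directly, symmetrizing to the region $x<y$ and substituting $x=yt$ to reduce it to $2\,\Beta(p+1,2H-1)\int_0^1 y^{2p+2H-1}\,dy = \Beta(p+1,2H-1)/(p+H)$. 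Your computation is correct, and the convergence requirements $p>-1$, $2H-1>0$, $2p+2H>0$ are exactly what the hypotheses $H\in(1/2,1)$, $p+H>0$ supply. The direct route has the small advantage of being self-contained (for $n=2$ Selberg's formula is precisely this Beta-function reduction), at the cost of not exhibiting the connection to the Selberg framework that the paper also invokes right after Definition \ref{def:smallf} to show $f_{p,H}$ is finite.
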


\begin{lem}\label{lem:limit} Let $H\in (1/2,1)$, $p+H>0$. Then we have

\begin{align}\label{eq:limit}
\lim_{u\to\infty}\sup_{b\ge 1}\Big|\frac{F_{\rho,\sigma}(u, bu)}{u^{2p+2H}  f_{p,H}(1,b)}-\kappa\Big|=
0 ,
\end{align}
where $F_{\rho,\sigma}(.,.)$ is as in definition \ref{defn:F}, with $\rho$ satisfying \eqref{eq:rho} and $\sigma$ satisfying \eqref{eq:sigma_p}.

\end{lem}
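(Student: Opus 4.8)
The plan is to establish the uniform limit \eqref{eq:limit} by reducing the integral $F_{\rho,\sigma}(u,bu)$ to the model integral $f_{p,H}$ through careful replacement of the interpolated weights and correlation by their asymptotic forms. First I would observe that, by Definition~\ref{defn:F}, $F_{\rho,\sigma}(u,bu)=\int_0^{u}\int_0^{bu}\sigma(x)\sigma(y)\rho(\lceil x\rceil-\lceil y\rceil)\,dx\,dy$, and after the substitution $x=us$, $y=ut$ this becomes $u^2\int_0^1\int_0^b \sigma(us)\sigma(ut)\rho(\lceil us\rceil-\lceil ut\rceil)\,ds\,dt$. Under \eqref{eq:sigma_p} we have $\sigma(us)\sim (us)^p$ and $\sigma(ut)\sim(ut)^p$, while under \eqref{eq:rho} we have $\rho(\lceil us\rceil-\lceil ut\rceil)\sim \kappa|us-ut|^{2H-2}=\kappa u^{2H-2}|s-t|^{2H-2}$ whenever $|s-t|$ is bounded away from $0$. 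Collecting the powers of $u$ gives exactly $u^{2p}\cdot u^{2H-2}\cdot u^2=u^{2p+2H}$ as the natural scale, and the remaining integrand tends pointwise to $\kappa\, s^p t^p|s-t|^{2H-2}$, whose integral over $[0,1]\times[0,b]$ is $\kappa\, f_{p,H}(1,b)$ by Definition~\ref{def:smallf}.

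The substance of the argument is to upgrade this pointwise heuristic to a limit that is \emph{uniform in $b\ge 1$}, which is what the supremum in \eqref{eq:limit} demands. My strategy would be to split the domain into a diagonal region $\{|s-t|\le\delta\}$ and its complement. On the off-diagonal region $|s-t|>\delta$ the factor $|s-t|^{2H-2}$ is bounded and the asymptotics \eqref{eq:rho} and \eqref{eq:sigma_p} apply uniformly, so one can replace $\sigma$ and $\rho$ by their limiting powers with a controlled multiplicative error that is uniform in $b$. For the diagonal strip I would bound the contribution crudely: the weights $\sigma(x)^2,\sigma(y)^2$ are of order $u^{2p}$ there, and the integral $\int\int_{|s-t|\le\delta}|s-t|^{2H-2}\,ds\,dt$ over the strip within $[0,1]\times[0,b]$ is of order $\delta^{2H-1}$ per unit length, giving a contribution comparable to $\delta^{2H-1}$ relative to the normalizing factor. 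Crucially, I must check that this error is uniform in $b$ after dividing by $f_{p,H}(1,b)$; here Lemma~\ref{lem:f}(a) is the essential tool, since \eqref{eq:f1}--\eqref{eq:f2} pin down the growth rate of $f_{p,H}(1,b)$ as $b\to\infty$ (it grows like $\psi_{p+2H-2}(b)/(p+1)$) and thereby guarantee that the diagonal and tail errors, which grow no faster, remain a bounded fraction of the denominator uniformly in $b$.

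The main obstacle I anticipate is precisely this uniformity in $b$ together with the interplay between the two regimes $p+2H-2<-1$ and $p+2H-2\ge -1$, which control whether $f_{p,H}(1,b)$ stays bounded or diverges as $b\to\infty$. When $f_{p,H}(1,b)$ is bounded, the denominator cannot help absorb errors as $b$ grows, so the uniform control must come entirely from the decay of the integrand for large $t$; when $f_{p,H}(1,b)\to\infty$, the denominator grows and one needs to verify the error terms grow no faster. Lemma~\ref{lem:f}(a) is tailored to handle both cases by furnishing two-sided bounds on $f_{p,H}(1,b)$ in terms of the explicit function $\psi_{p+2H-2}(b)$. A secondary technical nuisance is the discretization through the ceiling functions $\lceil us\rceil,\lceil ut\rceil$: the difference between $\lceil us\rceil-\lceil ut\rceil$ and $u(s-t)$ is bounded by $1$, which is negligible at scale $u$ on the off-diagonal but must be handled with a slightly more careful estimate near the diagonal, where I would again lean on the integrability of $|s-t|^{2H-2}$ to absorb the discretization error uniformly.
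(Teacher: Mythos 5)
Your overall strategy --- rescale, split off a diagonal strip, replace $\sigma$ and $\rho$ by their power-law asymptotics off the diagonal, and use Lemma~\ref{lem:f}(a) to check that all error terms are small relative to $u^{2p+2H}f_{p,H}(1,b)$ uniformly in $b$ --- is the same as the paper's, and most of its ingredients (the two-regime analysis of $\psi_{p+2H-2}$, the treatment of the ceiling discretization) do appear in the paper's proof. However, there is one genuine gap: your claim that on the off-diagonal region $\{|s-t|>\delta\}$ ``the asymptotics \eqref{eq:rho} and \eqref{eq:sigma_p} apply uniformly'' is false. Condition \eqref{eq:sigma_p} controls $\sigma(i)/i^p$ only for \emph{large} $i$; your off-diagonal region still contains the strips along the coordinate axes (e.g.\ $s\le N/u$ and $t>\delta$, i.e.\ $x=us\le N$ with $N$ fixed), where $\sigma(us)$ is just some fixed positive constant and need not be within a factor $1\pm\varepsilon$ of $(us)^p$. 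So the uniform multiplicative replacement fails exactly there, and that region cannot be waved away: its contribution is of order $\psi_{p+2H-2}(bu)$, which when $p+2H>1$ grows like $(bu)^{p+2H-1}$, i.e.\ it grows in $b$ at the same rate as the denominator $u^{2p+2H}f_{p,H}(1,b)$. It therefore genuinely competes with the normalization and needs its own estimate; it is covered neither by your diagonal bound (there $|x-y|$ is small, here it is large) nor by your off-diagonal replacement (which is invalid there).

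This is precisely why the paper's decomposition \eqref{eq:upper_bound_2018} has three pieces rather than your two: besides the diagonal strip $\{|x-y|\le N\}$ and the main region $\{x\ge N,\,y\ge N,\,|x-y|>N\}$ (on which both \eqref{eq:rho} and \eqref{eq:sigma_p} \emph{do} hold uniformly, with error $\eta_N\to 0$ as $N\to\infty$), there is a separate near-axes term $2\int_0^N\int_0^{bu}$, bounded in \eqref{eq:2018_1} by $\lesssim_{N,\rho,\sigma}\psi_{p+2H-2}(bu)$ and then shown to be negligible relative to $u^{2p+2H}f_{p,H}(1,b)$ uniformly in $b$ via \eqref{eq:f2} together with the multiplicativity/subadditivity of $\psi_{p+2H-2}$ in the two regimes $p+2H-2>-1$ and $p+2H-2\le -1$ --- the same $\psi$-bookkeeping you already planned for your other errors, just applied to a region your decomposition does not isolate. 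Note also that the paper cuts the off-diagonal at a fixed lag $|x-y|>N$ rather than at your proportional lag $|x-y|>\delta u$; either choice works for the correlation replacement (since $\delta u\to\infty$), but the weight problem near the axes is independent of how you cut in $|x-y|$ and must be handled by an extra term in any case. Once you add this third region and its estimate, your argument closes and essentially coincides with the paper's.
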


%
%

Equipped with these lemmas, we can now prove Theorem \ref{thm:1}. 
\\

\begin{proof} [ of Theorem \ref{thm:1}]
As in the proof of Theorem \ref{thm:summble}, define the continuous time Gaussian process $X(.)$ on $(0,\infty)$ by setting
 $X(u):=\int_0^u \sigma(\lceil v\rceil)\xi_{\lceil v\rceil}dv$. 
For any positive integer $k$  define a Gaussian process on $[0,\infty)$ by setting  
$X_k(t):=k^{-(p+1/2)}X(ke^t)$. As in Step 1 of the proof of Theorem~\ref{thm:summble} (cf.\ (\ref{eq:dm1})), it suffices to show that
\begin{align}\label{eq:dm11}
\lim_{k,T\to\infty}\frac{1}{T}\log \P(\sup_{t\in [0,T]}X_k(t)<0)=-\theta(C_{p,H}).
\end{align}
%
For showing \eqref{eq:dm11}, note that for any $0\le s\le t$, setting $u:=ke^s$ we have
\begin{align*}
\lim_{k\to\infty}\cov(X_k(t),X_k(s))&=\lim_{u\to\infty}\frac{F_{\rho,\sigma}( k e^s,  k e^s e^{t-s})}{k^{2p+2H}}\\
&=e^{(2p+2H)s}\lim_{u\to\infty}\frac{F_{\rho,\sigma}( u,  ue^{t-s})}{u^{2p+2H}}\\
&=\kappa e^{(2p+2H)s}f_{p,H}(1,e^{t-s}),
\end{align*}
using \eqref{eq:limit}. This readily gives
$$\lim_{k\to\infty}\corr(X_k(t),X_k(s))=e^{(s-t)(p+H)}\frac{f_{p,H}(1,e^{t-s}) }{f_{p,H}(1,1)} =C_{p,H}(t-s), $$
thus verifying that $C_{p,H}(\tau)$ is a valid correlation function. For showing that the limit $\theta(p,H)\in (0,\infty)$ we invoke Lemma \ref{lem:exist}, so that it suffices to show that $\int_0^\infty C_{p,H}(\tau)d\tau<\infty$, which follows on using \eqref{eq:f1} to note that $f_{p,H}(1,e^\tau)\lesssim_{p,H}\max(\tau, e^{\tau(p+2H-1)})$, and so we have
\begin{align}\label{eq:int_new}
C_{p,H}(\tau)\lesssim_{p,H}\max(\tau e^{-\tau(p+H)}, e^{-\tau(1-H)}).
\end{align} 
To conclude \eqref{eq:dm11}, it thus remains to verify the conditions of Theorem~\ref{lem:DM2}. To this effect, note that \eqref{eq:cont_1} follows from \eqref{eq:int_new} and Remark \ref{eq:cont_2020}. 
It thus remains to verify \eqref{eq:DM1} and \eqref{eq:DM3}, which is done below.
\\

%

{\bf Verification of \eqref{eq:DM1}.} Use Lemma \ref{lem:limit} to note the existence of $M<\infty$ such that for all $u>M$ and $b\ge e$ we have $$ \frac{1}{2} u^{2p+2H} f_{p,H}(1,b)\le \frac{F_{\rho,\sigma}( u, bu )}{\kappa}\le  2 u^{2p+2H} f_{p,H}(1,b).$$
Thus for any $s>0,\tau>1$, noting that $u=ke^s >M$ gives
\begin{align*}
\corr(X_k(s),X_k(s+\tau))&=\frac{F_{\rho,\sigma}( u,  ue^{\tau})}{\sqrt{F_{\rho,\sigma}( u,u  )}\sqrt{F_{\rho,\sigma}( u e^{\tau}, u e^{\tau})}}\\
&\le 4C_{p,H}(\tau)\lesssim_{p,H}\max(\tau e^{-\tau(p+H)},e^{-\tau(1-H)}),
\end{align*}
where the last inequality uses \eqref{eq:int_new}.
This verifies \eqref{eq:DM1} via Remark \ref{rem:DM1}.
\\

{\bf Verification of \eqref{eq:DM3}.}
For verifying \eqref{eq:DM3}, for $s\ge 0,\tau\in [0,1]$ setting $u:=ke^s\in [1,\infty)$ we have
\begin{eqnarray}\label{eq:tight1.1}
\notag
&&F_{\rho,\sigma}(ue^\tau,ue^\tau)-F_{\rho,\sigma}(u,ue^\tau)
\\
\notag&=&\int_0^{1}\int_{u}^{ue^\tau}\sigma(\lceil x\rceil)\sigma(\lceil y\rceil)\rho(\lceil x\rceil-\lceil y\rceil)dxdy
\\
\notag
&&+\int_1^{u/2}\int_{u}^{ue^\tau}\sigma(\lceil x\rceil)\sigma(\lceil y\rceil)\rho(\lceil x\rceil-\lceil y\rceil)dxdy\\
&&
+\int_{u/2}^{ue^\tau}\int_{u}^{ue^\tau}\sigma(\lceil x\rceil)\sigma(\lceil y\rceil)\rho(\lceil x\rceil-\lceil y\rceil)dxdy.
\end{eqnarray}
Using  \eqref{eq:rho} and \eqref{eq:sigma_p} we get $\rho(\lceil x\rceil-\lceil y\rceil)\lesssim_\rho |x-y|^{2H-2}$, and $\sigma(\lceil y\rceil)\lesssim_\sigma y^p$ for $y\ge 1$. Consequently, the first term on the right hand side of \eqref{eq:tight1.1} can be bounded as follows:
\begin{align}\label{eq:tight2.1}
\notag&  \int_{0}^{1}\int_{u}^{ue^\tau} \sigma(\lceil x\rceil)\sigma(\lceil y\rceil) \rho(\lceil x\rceil-\lceil y\rceil)dxdy\\
&\lesssim_{\sigma,\rho} u^{2H-2} \int_{0}^{1}\int_{u}^{ue^\tau} x^p dxdy\lesssim_p u^{p+2H-1}\tau
\end{align}
Similarly, the second term on the right hand side of \eqref{eq:tight1.1} can be bounded as follows:
\begin{align}\label{eq:tight3.1}
  \notag&\int_{1}^{u/2}\int_{u}^{ue^\tau} \sigma(\lceil x\rceil)\sigma(\lceil y\rceil) \rho(\lceil x\rceil-\lceil y\rceil)dxdy\\
&\lesssim_{\sigma,\rho} u^{p+2H-2} \int_{1}^{u/2}\int_{u}^{ue^\tau}x^pdxdy\lesssim_\sigma u^{2p+2H}\tau.
\end{align}
Finally, the third term on the right hand side of \eqref{eq:tight1.1} can be estimated as
\begin{align}\label{eq:tight4.1}
\notag&\int_{u/2}^{ue^\tau}\int_{u}^{ue^\tau} \sigma(\lceil x\rceil)\sigma(\lceil x\rceil) \rho(\lceil x\rceil-\lceil y\rceil)dx dy\\
\notag&\lesssim_{\rho,\sigma} u^{2p+2H}\int_{1/2}^{ e^\tau} \int_{1}^{e^\tau} |x-y|^{2H-2}dxdy\\
\notag&=u^{2p+2H}\int_1^{e^\tau}\int_{1/2-x}^{e^\tau-x} |z|^{2H-2} dzdx\\
&\le u^{2p+2H}\int_1^{e^\tau}\int_{1/2-e}^{e-1} |z|^{2H-2}dz dx\lesssim_H u^{2p+2H}(e^\tau-1).
%
\end{align}

Combining \eqref{eq:tight2.1}, \eqref{eq:tight3.1} and \eqref{eq:tight4.1} along with \eqref{eq:tight1.1} gives
\begin{align}\label{eq:tight5.1}
 F_{\rho,\sigma}(ue^\tau,ue^\tau)-F_{\rho,\sigma}(u,ue^\tau)\lesssim_{\rho,\sigma}&{u}^{2p+2H}\tau.
\end{align}
Using \eqref{eq:tight5.1} and \eqref{eq:limit} gives, for all $\tau\in [0,1]$,
$$
\sup_{s\ge 0, k\ge 1}\left(1-\corr\Big(X_k(s),X_k(s+\tau)\Big)\right)\lesssim_{\rho,\sigma} \tau,
$$
which verifies \eqref{eq:DM3}, and hence completes the proof of the theorem.
\end{proof}

\subsection{Proof of Theorem \ref{exp:conti}}

\begin{proof}[ of Theorem \ref{exp:conti}] \textit{Step 1: Proof of continuity.}
Let $(p_k,H_k)$ be a sequence converging to $(p_\infty,H_\infty)$. We need to show that $$\lim_{k\to\infty}\theta({p_k,H_k})=\theta({p_\infty,H_\infty}).$$ Since  $\lim_{k\to\infty}C_{p_k,H_k}(\tau)=C_{p_\infty,H_\infty}(\tau)$, this will follow by another application of Theorem~\ref{lem:DM2}, once we verify the conditions of that lemma. Using \eqref{eq:f1} gives the existence of a continuous function $M(p,H)$ such that 
$$f_{p,H}(1,e^\tau)\le M(p,H) \max\Big(\tau,e^{\tau(p+2H-1)}\Big),$$
which gives
\begin{align*}
C_{p_k,H_k}(\tau) \le  M \max\Big(\tau e^{-(p_k+H_k)}, e^{-\tau(1-H_k)}\big),\quad M:=\sup_{k\ge 1} M(p_k,H_k) ,
\end{align*} and consequently
$$\sup_{k\ge 1,\tau\ge 0}\frac{\log C_{p_k,H_k}(\tau)}{\log \tau}=-\infty.$$
This verifies  \eqref{eq:DM1} via Remark \ref{rem:DM1}. 
The above display along with Remark \ref{eq:cont_2020} also verifies \eqref{eq:cont_1}. It thus suffices to verify \eqref{eq:DM3}. But this follows on noting that $C_{p_k,H_k}(\tau)\ge e^{-(p_k+H_k)\tau}$.
\\

\textit{Step 2:  Proof of \eqref{eq:limH}.} To begin note that
\begin{align}
\notag-\frac{\theta({p,H})}{1-H}=&\lim_{T\to\infty} \frac{1}{T}\log \P\Big(\sup_{t\in [0,\frac{T}{1-H}]}Z(t)<0\Big)\\
=&\lim_{T\to\infty}\frac{1}{T}\log \P\Big(\sup_{t\in [0,T]}Z\Big(\frac{t}{1-H}\Big)<0\Big)\label{eq:h=1}.
\end{align}
Let 
\begin{align*}
A_{p,H}(\tau):=&C_{p,H}\Big(\frac{\tau}{1-H}\Big)=e^{-\tau\frac{p+H}{1-H}} \frac{f_{p,H}(1,e^{\frac{\tau}{1-H}})}{f_{p,H}(1,1)}
\end{align*}
 denote the correlation of the process $\Big\{Z\Big(\frac{t}{1-H}\Big),t\ge 0\Big\}$. Using \eqref{eq:f1}, 
 on letting $H\uparrow 1$ followed by $N\to\infty$, for all $H$ such that $p+2H>1$ we get
\begin{align*}
\limsup_{H\uparrow 1} \frac{f_{p,H}(1, e^{\frac{\tau}{1-H}})}{e^{\tau\frac{p+2H-1}{1-H}}}\le \frac{1}{(p+1)^2}.
\end{align*}
A similar calculation using \eqref{eq:f2} gives the lower bound
and so we get $$\lim_{H\uparrow 1} \frac{f_{p,H}(1,e^{\frac{\tau}{1-H}})}{e^{\tau\frac{p+2H-1}{1-H}}}=\frac{1}{(p+1)^2},$$
which immediately gives that $$\lim_{H\uparrow 1}A_{p,H}(\tau)=e^{-\tau}.$$ This is the correlation function of the scaled Ornstein-Uhlenbeck process, which by Remark \ref{rem:orn} satisfies \eqref{eq:cont_1}, and has persistence exponent $1$. The desired conclusion will then follow from \eqref{eq:h=1} by invoking Theorem~\ref{lem:DM2}, once we verify the other two conditions of that lemma, namely \eqref{eq:DM1} and \eqref{eq:DM3}. 
\\

To this effect, again use \eqref{eq:f1} with $N=2$ to note that for all $H$ such that $p+2H>1$ we have
$$f_{p,H}(1,e^{\frac{\tau}{1-H}})\lesssim_{p} e^{\tau\frac{p+2H-1}{1-H}},$$
which gives
$A_{p,H}(\tau)\lesssim_p e^{-\tau},$ thus verifying \eqref{eq:DM1} via Remark \ref{rem:DM1}. 
Proceeding to verify \eqref{eq:DM3},  setting $\beta:=\beta_H:=\frac{1}{1-H}$ and using \eqref{eq:f2} along with the fact that $p+2H>1$ for all $H\sim 1$ we get
\begin{align*}
& f_{p,H}(1,1) - e^{-\beta \tau (p+H)} f_{p,H}(1,e^{\beta \tau})
\\
&\leq  f_{p,H}(1,1) - e^{-\beta \tau (p+H)} [ f_{p,H}(1,1) + \frac{(e^{\beta \tau})^{p+2H-1} -1}{(p+1)(p+2H-1)} ]
\\
&=  f_{p,H}(1,1)(1-e^{-\tau}) + \left( e^{-\beta \tau (p+H)} -e^{-\tau}\right) [\frac{1}{(p+1)(p+2H-1)}- f_{p,H}(1,1)]
\\
&\le f_{p,H}(1,1) \tau  + \left| e^{-\beta \tau (p+H)} -e^{-\tau}\right| \left|\frac{1}{(p+1)(p+2H-1)}- f_{p,H}(1,1)\right|
\\
&\le f_{p,H}(1,1) \tau  + ( \beta (p+H) + 1)\tau \left|\frac{1}{(p+1)(p+2H-1)}- f_{p,H}(1,1)\right|,
\end{align*}
where the last step follow from the fact that $|e^{-x}-e^{-y}| \leq x +y$. This gives
\begin{align*}
\frac{1-A_{p,H}(\tau)}{\tau}& =\frac{ f_{p,H}(1,1) - e^{-\beta \tau(p+H)} f_{p,H}(1,e^{\beta \tau})}{\tau f_{p,H}(1,1)}\\
& \le 1+(\beta(p+H)+1) \left|\frac{1}{(p+1)(p+2H-1)f_{p,H}(1,1)}-1\right|,
\end{align*}
and so to verify \eqref{eq:DM3} it suffices to show that the right hand side above stays bounded as $H\uparrow 1$, or equivalently, $(1-H)^{-1} |f_{p,H}(1,1)-\frac{1}{(p+1)^2}|$ stays bounded as $H\uparrow 1$. Use part (b) of Lemma \ref{lem:f} to note  that $f_{p,1}(1,1)=\frac{1}{(p+1)^2}$, and so it suffices to show that $\frac{\partial f_{p,H}(1,1)}{\partial H}$ is bounded in a neighborhood of $1$. 
But this follows on noting that the derivative is continuous in $H$, and converges as $H\uparrow 1$ to
$$2\int_0^1\int_0^1 x^p y^p \log |x-y|dxdy,$$ which is finite for $p>-1$.
\\



\textit{Step 3:  Proof of \eqref{eq:limH2}.} To begin, fixing $\delta>0$ and using \eqref{eq:f1} with $N=1+\delta$ gives
\begin{align*}
f_{p,H}(1,e^\tau)- f_{p,H}(1,1+\delta)\lesssim_p&\Big(\frac{\delta}{1+\delta}\Big)^{2H-2}\max\Big(\tau, e^{\tau(p+2H-1)}\Big),
\end{align*}
which gives
\begin{align*}
f_{p,H}(1,e^\tau)-(1+\delta)^{2p+2H}f_{p,H}(1,1)\lesssim_p \Big(\frac{\delta}{1+\delta}\Big)^{2H-2}\max\Big(\tau,e^{\tau(p+2H-1)}\Big),
\end{align*}
which gives 
\begin{align}\label{eq:orn_upper}
 C_{p,H}(\tau)-(1+\delta)^{2p+2H} e^{-\tau(p+H)}
\lesssim_p&\Big(\frac{\delta}{1+\delta}\Big)^{2H-2}\frac{\max\Big(\tau e^{-\tau(p+H)},e^{-\tau(1-H)}\Big)}{f_{p,H}(1,1)}.
\end{align}
On letting $H\downarrow 1/2$ followed by $\delta\to 0$ and noting that $f_{p,H}(1,1)\to \infty$ gives
\begin{align*}
\limsup_{H\downarrow 1/2}C_{p,H}(\tau)\le e^{-\tau(p+\frac{1}{2})}.
\end{align*}
The corresponding lower bound follows from the trivial bound
$C_{p,H}(\tau)\ge e^{-(p+H)\tau}$, giving $\lim_{H\downarrow 1/2}C_{p,H}(\tau)=e^{-(p+1/2)\tau}$, which is the correlation function of the scaled Ornstein-Uhlenbeck process which has persistence exponent $p+1/2$, by Remark \ref{rem:orn}, and satisfies \eqref{eq:cont_1}. The desired conclusion will then follow from Theorem~\ref{lem:DM2}, once we verify the conditions \eqref{eq:DM1} and \eqref{eq:DM3} of the lemma.
To this effect, \eqref{eq:DM1} follows from \eqref{eq:orn_upper} and Remark \ref{rem:DM1}, and 
\eqref{eq:DM3} follows on noting that $C_{p,H}(\tau)\ge e^{-(p+H)\tau}$, and so the proof is complete.
\\


\textit{Step 4:  Proof of \eqref{eq:limp}.} For any $\tau>0$  using \eqref{eq:f1} with $N=e^{\frac{\tau}{4}}$ gives
\begin{align*}
f_{p,H}(1,e^\tau)
\le e^{\frac{(p+H)\tau}{2}}f_{p,H}(1,1)+(1-e^{-\frac{\tau}{4}})^{2H-2} \frac{e^{\tau(p+2H-1)}}{{(p+1)(p+2H-1)}},
\end{align*}
which readily gives
\begin{align}\label{eq:ready}
 C_{p,H}(\tau)&\le  e^{-\frac{(p+H)\tau}{2}}+(1-e^{-\frac{\tau}{4}})^{2H-2}\frac{e^{-\tau(1-H)}}{f_{p,H}(1,1)(p+1)(p+2H-1)}\\
&\lesssim_{H,\tau} e^{-\frac{(p+H)\tau}{2}}+p^{2H-2},  \notag 
\end{align}
where the last inequality uses part (b) of Lemma \ref{lem:f}.
Thus letting $p\to\infty $ we get $\lim_{p\to\infty}C_{p,H}(\tau)=0$. Further,  using \eqref{eq:ready} for all $p\ge 1,\tau\ge 1$ we have 
$ C_{p,H}(\tau)\lesssim_H e^{-\frac{\tau H}{2}}+ e^{-\tau(1-H)}$, and so $C_{p,H}(\tau)$ satisfies \eqref{eq:DM1}. Thus conclusion then follows from part (b) of Lemma \ref{lem:DM22}.
\\


\textit{Step 5:  Proof of \eqref{eq:limpinfty}.}
Note that the stationary Gaussian process with correlation function $C_{p,H}(\tau/p)$ has persistence exponent $\theta(C_{p,H})/p$. We shall show that this sequence of correlation functions, when $p\to\infty$, converges to a non-integrable correlation function, and then invoke part (a) of Lemma \ref{lem:DM22}. Recall that

$$ C_{p,H}\left(\frac{\tau}{p}\right)
= e^{-(p+H) \tau / p} \, \frac{\int_0^1 \int_0^{e^{\tau/p}} x^p y^p |x-y|^{2H-2} d x d y}{\int_0^1 \int_0^1 x^p y^p |x-y|^{2H-2} d x d y}.
$$

Clearly, the first term tends to $e^{-\tau}$. In the integral, we set $x=1-u/p$ and $y=1-v/p$ and obtain
$$
 \int_0^1 \int_0^{e^{\tau/p}} x^p y^p |x-y|^{2H-2} d x d y = p^{-2H} \int_0^p \int_{(1-e^{\tau/p})p}^p (1-u/p)^p (1-v/p)^p |u-v|^{2H-2} d u d v.
$$
Therefore, as $p\to\infty$
$$
C_{p,H}\left(\frac{\tau}{p}\right) \to e^{-\tau} \, \frac{\int_0^\infty \int_{-\tau}^\infty e^{-u} e^{-v} |u-v|^{2H-2} d u d v}{\int_0^\infty \int_0^\infty e^{-u} e^{-v} |u-v|^{2H-2} d u d v} =: C_{\infty,H}(\tau).
$$


We now claim that function $C_{\infty,H}(.)$ is non-integrable. Indeed, denoting by $K^{-1}$ the constant in the denominator of $C_{\infty,H}$, we have, as $\tau\to\infty$,
\begin{eqnarray*}
&& \tau^{2-2H}C_{\infty,H}(\tau)
\\
\\
&=&K \, \int_0^\infty \int_0^{\infty} e^{-u} e^{-v} \big|\frac{u-v}{\tau}-1\Big|^{2H-2} d u d v
\\
\\
&\to &K\, \int_0^\infty \int_0^\infty e^{-u} e^{-v} d u d v,
\end{eqnarray*}
and so $C_{\infty,H}(\tau)$ is regularly varying and non-integrable.  This observation along with part (b) of Lemma \ref{lem:exist} shows that $\theta(C_{\infty,H})=0$. From this, the conclusion follows from part (a) of Lemma \ref{lem:DM22}, once we verify \eqref{eq:DM3}. But this follows on noting that $C_{p,H}(\tau/p)\ge e^{-\frac{p+H}{p}\tau}$.
\\

\textit{Step 6: Proof of \eqref{eq:limp+h}.}
We first claim that
\begin{align}
\limsup_{p\downarrow -H}\int_0^1 \int_1^\infty x^p y^p |x-y|^{2H-2} d x d y  < \infty.
 \label{lem:integralinp+Htozero}
 \end{align}
We first complete the proof of \eqref{eq:limp+h}, deferring the proof of \eqref{lem:integralinp+Htozero}. To this effect, note that the stationary Gaussian process with correlation function $C_{p,H}\left(\frac{\tau}{p+H}\right)$ has persistence exponent $\frac{\theta(C_{p,H})}{p+H}$. We shall show that this sequence of correlation functions, when $p\downarrow -H$, converges to $e^{-\tau}$, which is the correlation function of an Ornstein-Uhlenbeck process with $\alpha=1$, satisfying \eqref{eq:cont_1}, by Remark \ref{rem:orn}).

For this purpose, first note that
$$
\lim_{p\downarrow -H}f_{p,H}(1,1)=\frac{\Gamma(p+1)\Gamma(2H-1)}{(p+H)\Gamma(p+2H)}\to \infty.
$$
Second, note that
$$
f_{p,H}(1,e^{\frac{\tau}{p+H}}) = f_{p,H}(1,1) + \int_0^1 \int_1^{e^{\frac{\tau}{p+H}}} x^p y^p |x-y|^{2H-2} d x d y.
$$
By \eqref{lem:integralinp+Htozero}, the second term remains bounded when $p\downarrow -H$. Therefore,
$$
C_{p,H}\left(\frac{\tau}{p+H}\right)=e^{-\tau}\frac{f_{p,H}(1,e^{\frac{\tau}{p+H}})}{f_{p,H}(1,1)} \to e^{-\tau}.
$$
The conclusion follows by invoking Theorem~\ref{lem:DM2}, once we verify \eqref{eq:DM1} and \eqref{eq:DM3}.
Invoking \eqref{lem:integralinp+Htozero} we have $C_{p,H}(\tau/(p+H))\lesssim_H e^{-\tau}$ so that we get \eqref{eq:DM1} via Remark \ref{rem:DM1}. Further, \eqref{eq:DM3} is obtained from the observation that $C_{p,H}(\frac{\tau}{p+H})\ge e^{-\tau}$, and so \eqref{eq:limp+h} follows.
\\

It thus remains to verify \eqref{lem:integralinp+Htozero}. To this effect we have
\begin{eqnarray*}
&& \int_0^1 \int_1^\infty x^p y^p |x-y|^{2H-2} d x d y 
\\
&=& \int_0^1 \int_1^2 x^p y^p (x-y)^{2H-2} d x d y + \int_0^1 \int_2^\infty x^p y^p (x-y)^{2H-2} d x d y 
\\
&\leq & \int_0^1 \int_1^2  y^p (x-1)^{2H-2} d x d y + \int_0^1 \int_2^\infty x^p y^p (x-1)^{2H-2} d x d y 
\\
&\leq &  \frac{1}{1+p} \Big[ \int_1^2 (x-1)^{2H-2} d x  + \int_2^\infty  x^p(x/2)^{2H-2} d x \Big]
\\
&= &  \frac{1}{1+p} \Big[ \frac{1}{2H-1}  + 2^{2-2H} \int_2^\infty x^{p+2H-2}   d x \Big]\\
&\to &  \frac{1}{1-H} \Big[ \frac{1}{2H-1}  + 2^{2-2H} \int_2^\infty x^{H-2}   d x \Big],
\end{eqnarray*}
which is clearly finite as $H-2<-1$. This verifies \eqref{lem:integralinp+Htozero}, and hence completes the proof of \eqref{eq:limp+h}.
\end{proof}

 \section{Proof of the general tools} \label{sec:proofofgeneraltools}
 
 Throughout this section we carry out the proofs for $\mathbb{T}=\R_{\ge 0}$, noting that the proof for $\N$ is simpler, and follows by minor modifications of the arguments outlined.
 
\begin{proof}[ of Lemma \ref{lem:exist}]
The existence of $\theta(A,r)$ follows from non-negativity of $A(.)$ along with Slepian's Lemma. Fixing a positive integer $k$, Slepian's Lemma also gives that
$$\P(\sup_{[0,T]}Z(t)<r)\ge \P(\sup_{[0,1/k]}Z(t)<r)^{\lceil T\rceil k},$$ which after taking logarithms, dividing by $T$, and taking the limit $T\to\infty$ gives 
$\theta(A,r)\le -k\log \P(\sup_{[0,1/k]}Z(t)<r),$
from which we get $\theta(A,r)<\infty$ if $\P(\sup_{[0,1/k]}Z(t)<r)>0$ for some $k$. If there exists no such $k\ge 1$, then $\P(\sup_{[0,1/k]}Z(t)<r)=0$ for all $k\ge 1$, which on taking limits as $k\to\infty$ and using continuity of sample paths gives $\P(Z(0)<r)=0$, which is a contradiction as $Z(0)$ is a centered Gaussian.


{\bf Proof of (a).} 
Fix a positive integer $M$, and set $s_i:=(M+1)i$ for $i\ge 1$. Thus with $N:=\lfloor \frac{T}{M+1}\rfloor$ we have
\begin{align} \label{eqn:20-02-20hash}
\P(\sup_{t\in [0,T]}Z(t)<r)\le& \P(\max_{1\le i\le N}\sup_{[\fa{s_i-1},s_i]}Z(t)<r)
\le \P\Big(\max_{1\le i\le N}\int\limits_{\fa{s_i-1}}^{s_i}Z(t)dt<r\Big).
\end{align}
Let us denote $\zeta:=\left(\int\limits_0^{1}\int\limits_0^{1}A(u-v)dudv\right)^{1/2}$. Setting $X(i):= \zeta^{-1}\int\limits_{\fa{s_i-1}}^{s_{i}}Z(u)du$ for $i\ge 1$, the last term in (\ref{eqn:20-02-20hash}) is same as $\P(\max_{1\le i\le N}X(i)<\zeta^{-1}r)$. The covariance matrix of the centered Gaussian vector $(X(1),\ldots,X(N))$ is given by the matrix $B$, where $B(i,i):=1$, and for $i< j$,
\begin{align*}
\fa{B(i,j):=\zeta^{-2} \int\limits_0^1 \int\limits_{s_j-s_i}^{s_j-s_i+1}A(u-v)dudv=\zeta^{-2} \int\limits_0^1 \int\limits_{s_{j-i}}^{s_{j-i}+1}A(u-v)dudv}.
\end{align*}
\fa{Noting that $s_1\ge M+1$,} this can be estimated as follows:
$$
\max_{1\le i\le N}\sum_{j\ne i}B(i,j)\le 
2\zeta^{-2}\int_0^1\int_{M+1}^\infty A(u-v)du dv
\le 2\zeta^{-2} \int\limits_{M}^\infty A(u)du =:\varepsilon_M,
$$
where $\lim\limits_{M\to\infty}\varepsilon_M=0$, as $\int\limits_0^\infty A(t)dt<\infty$. 
Invoking Gershgorin's circle theorem (\cite[Thm 6.1.1]{HJ}) all eigenvalues of $B$ lie within $[1-\varepsilon_M,1+\varepsilon_M]$, and so we have
\begin{align*}
\P(\max_{1\le i\le N}X(i)<\zeta^{-1}r)=&\frac{\int\limits_{(-\infty,\zeta^{-1}r)^N}e^{-{\bf x}'B^{-1}{\bf x}/2}d{\bf x}}{\int\limits_{(-\infty,\infty)^N}e^{-{\bf x}'B^{-1}{\bf x}/2}d{\bf x}}
\le \Big(\frac{1+\varepsilon_M}{1-\varepsilon_M}\Big)^{N/2}\P\Big(\mathcal{N}(0,1)<\frac{\zeta^{-1}r}{\sqrt{1-\varepsilon_M}}\Big)^N.
\end{align*}
Combining this with (\ref{eqn:20-02-20hash}) we have the upper bound 
$\P(\sup_{t\in[0,T]}Z(t)<r)\le \beta_M^N$ with $$\beta_M:=\sqrt{\frac{1+\varepsilon_M}{1-\varepsilon_M}}\,\P\Big(\mathcal{N}(0,1)<\frac{\zeta^{-1}r}{\sqrt{1-\varepsilon_M}}\Big)\stackrel{M\rightarrow\infty}\rightarrow\P(\mathcal{N}(0,1)<\zeta^{-1}r)<1.$$ Thus there exists an $M$ such that $\beta_M<1$, and so
$\theta(A,r)\ge -\frac{1}{M+1}\log \beta_M>0$,
which completes the proof of part (a).


{\bf Proof of (b).} To begin setting $I(T):=\int_0^T A(s)ds$ we claim the existence of a  sequence of increasing positive reals $\{T_k\}_{k\ge 1}$ such that  
\begin{align}\label{eq:new_claim}
\lim_{k\to\infty}\frac{I(T_k)}{I(T_k/2)^2}=0.
\end{align}
{\it Step 1: We complete the proof of the lemma assuming (\ref{eq:new_claim}) holds.} To this effect, with $T=T_k$ and setting $Y_T:=\int_0^T Z(t)dt$ we have
$$
\sigma_T^2:=\Var(Y_T)=\int_0^T\int_0^T A(u-v)dudv=2\int_0^T(T-u) A(u)du\le 2TI(T);
$$ giving
\begin{align}
\notag\P(\sup_{t\in[0,T]}Z(t)<r)& \ge  \P(Y_T<-\delta\sigma_T\sqrt{T},\sup_{t\in[0,T]}Z(t)<r)\\
\label{eq:main1}&=\E \Big[\P\Big(\sup_{t\in[0,T]}Z(t)<r\Big|Y_T\Big){\bf 1}\Big\{Y_T\le -\delta\sigma_T\sqrt{T}\Big\}\Big].
\end{align}
\fa{We now claim that given $Y_T=y$, $\{Z(t),t\geq 0 \}$ is a Gaussian process with mean $m(t)y$ and covariance $C(t_1,t_2)$, where
\begin{align}
\begin{split}
m(t)y&:=\frac{\int\limits_0^T A(t-u)du}{\sigma_T^2}y ,\\
C(t_1,t_2)&:=A(t_1-t_2)-\frac{\int_0^T\int_0^T A(t_1-s_1)A(t_2-s_2)ds_1ds_2}{\sigma^2_T}. \label{eqn:20-02-20plus}
\end{split}
\end{align}
To derive the conditional mean in \eqref{eqn:20-02-20plus}, fixing $t\in [0,T]$ the joint distribution of $(Z(t),Y_T)$ is a centered Gaussian vector with covariance matrix
\[\begin{bmatrix}
1 &  \int_0^T A(t-u)du\\
\\
 \int_0^T A(t-u)du &\sigma^2_T
 \end{bmatrix}.\]
Thus the conditional mean of $Z(t)$ given $Y_T=y$ is given by
$$ 0+ \frac{ \int_0^T A(t-u)du}{\sigma^2_T}y=\frac{ \int_0^T A(t-u)du}{\sigma_T^2}y,$$
as claimed in \eqref{eqn:20-02-20plus}.
Focusing on the conditional covariance in \eqref{eqn:20-02-20plus}, fixing $t_1,t_2\in [0,T]$ 
 note that
$[Z(t_1),Z(t_2),Y_T]$ is a centered Gaussian vector with covariance matrix
\[\begin{bmatrix}
1 & A(t _1-t_2)&  \int_0^T A(t_1-u)du\\
\\
A(t_1-t_2) & 1 & \int_0^T A(t_2-u)du\\
\\
 \int_0^T A(t_1-u)du& \int_0^T A(t_2-u)du&\sigma_T^2
\end{bmatrix}.\]
Thus the conditional covariance matrix of $(Z(t_1), Z(t_2))$ given $Y_T=y$ is given by
{\footnotesize\[\begin{bmatrix}
1 & A(t _1-t_2)\\
\\
A(t_1-t_2) & 1
\end{bmatrix}-\frac{1}{\sigma_T^2}
\begin{bmatrix}
\int_0^T A(t_1-u)du\\
\\
\int_0^T A(t_2-u)du
\end{bmatrix}
\begin{bmatrix}
\int_0^T A(t_1-u)du&\int_0^T A(t_2-u)du
\end{bmatrix}.
\]}
 Also, the covariance term (which is the off-diagonal term above) equals
$$A(t_1-t_2)-\frac{\int_0^T A(t_1-u)du \int_0^T A(t_2-v) dv}{\sigma_T^2},$$
thus verifying \eqref{eqn:20-02-20plus}. On the set $\{Y_T\le -\delta \sigma_T \sqrt{T} \}$, we have
\begin{align*}
m(t)y=\frac{\int\limits_0^T A(t-u)du}{\sigma_T^2}y  \le  -\frac{\delta I(T/2)\sqrt{T}}{\sigma_T}\le - \frac{\delta I(T/2)}{\sqrt{2I(T)}}=:-K_T, \qquad t\geq 0.
\end{align*}
We note that due to (\ref{eq:new_claim}) we have $K_T\to\infty$ along the subsequence mentioned there. Therefore, we can assume that $K_T+r>0$.}
This gives, on the set $\{Y_T\le -\delta \sigma_T\sqrt{T}\}$,
\begin{align}
\notag\P(\sup_{t\in [0,T]}Z(t)<r|Y_T)&\ge \P\Big(\sup_{t\in [0,T]}\{|Z(t)-m(t)Y_T|\}<r+K_T|Y_T\Big)\\
\label{eq:gaussian_correlation}& \ge \prod_{i=1}^{\lceil T\rceil}\P\Big(\sup_{t\in [i-1,i]}\{|Z(t)-m(t)Y_T|\}<r+K_T|Y_T\Big),
\end{align}
where the second inequality is by the Gaussian correlation inequality (\cite{royen}, also see \cite[Theorem 1]{Lat_Mal}).
Proceeding to estimate the right hand side above, first note that by (\ref{eqn:20-02-20plus}) and non-negative correlations, we have
\begin{align*}
\Var\Big[Z(t)-Z(s)\Big|Y_T\Big]\le \Var(Z(t)-Z(s)),
\end{align*}
which along with Sudakov-Fernique inequality (\cite[Theorem 2.2.3]{AT}) gives
\begin{equation}
\E\Big( \sup_{t\in [i-1,i]}\{Z(t)-m(t)Y_T\}|Y_T\Big)\le \E \sup_{t\in [i-1,i]}Z(t)= \E \sup_{t\in [0,1]}Z(t)=:\alpha<\infty,
\label{eqn:newref1}
\end{equation}
using the Borel-TIS inequality (\cite[Thm 2.1.1]{AT}). Invoking Borell-TIS inequality we now get 
\begin{align*}
\P(\sup_{t\in [i-1,i]}\{|Z(t)-m(t)Y_T|\}>r+K_T|Y_T\Big)
& = 2\P\Big(\sup_{t\in [i-1,i]}\{Z(t)-m(t)Y_T\}>r+K_T|Y_T\Big)\\
& \le  2 e^{-\frac{1}{2}(r+K_T-\alpha)^2},
\end{align*}
where we used (\ref{eqn:newref1}) and that fact that $K_T+r-\alpha>0$ along the subsequence mentioned in (\ref{eq:new_claim}), because $K_T\to\infty$ along that subsequence. The last relation, along with \eqref{eq:gaussian_correlation} gives
$$\P(\sup_{t\in [0,T]}Z(t)<r|Y_T)\ge \Big[1-2e^{-\frac{1}{2}(r+K_T-\alpha)^2}\Big]^{\lceil T\rceil}.$$
Combining with \eqref{eq:main1}, this gives
$$\P(\sup_{t\in [0,T]}Z(t)<r)\ge  \Big[1-2e^{-\frac{1}{2}(r+K_T-\alpha)^2}\Big]^{\lceil T\rceil}\P(Y_T\le -\delta \sigma_T \sqrt{T}),$$
which on taking $\log$, dividing by $T$, and letting $T\to\infty$ along the sequence $\{T_k\}_{k\ge 1}$ gives
$-\theta(A,r)\ge -\frac{\delta^2}{2},$
where we have used \eqref{eq:new_claim} to conclude that that $K_T\to\infty$. The desired conclusion then follows since $\delta>0$ is arbitrary.

{\it Step 2: We prove  \eqref{eq:new_claim}.} Assume by way of contradiction that \eqref{eq:new_claim} does not hold, which implies $\liminf_{T\to\infty}\frac{I(T)}{I(T/2)^2}>0.$ Thus there exists $\varepsilon>0$ and $T_0>0$ such that  we have
\begin{equation} \label{eqn:20-02-20-iterate}
I(T)\ge \varepsilon I(T/2)^2,\qquad \forall T\ge 2T_0.
\end{equation}

Using the assumption that $I$ diverges, we can now make $T_0$ even larger such that $\eps I(T_0)>2$. Further, by iterating (\ref{eqn:20-02-20-iterate}), we obtain
$$
I(2^k T_0)\ge \varepsilon^{1+2+2^2+2^{k-1}}  I(T_0)^{2^k} \geq (\eps I(T_0))^{2^k} ,
$$
which along with the trivial bound $I(T)\le T$ and the choice of $T_0$ (namely, $\eps I(T_0)\geq 2$) gives $2^kT_0\ge 2^{2^k}$.
But this is a contradiction, as the right hand side grows much faster than the left hand side as $k\to\infty$. This completes the proof of \eqref{eq:new_claim}.
\end{proof}

\fa{\begin{proof}[ of Theorem \ref{lem:ohad}]
If $\int_0^\infty A(t)dt=\infty$, then using Lemma \ref{lem:exist} part (ii) we have
$\theta(A,r)=0$ for every $r\in\mathbb{R}$, and so continuity is trivial. So without loss of generality assume that $\int_0^\infty A(t)dt<\infty$.
Use continuity of sample paths to note that
$$\P(\sup_{t\in [0,M]}Z(t)=r)=\P(\sup_{t\in [0,M]\cap\Q}Z(t)=r)\le \sum_{q\in [0,M]\cap \Q}\P(Z(q)=r)=0,$$
and so the distribution function of $\sup_{t\in [0,M]}Z(t)$ is continuous. 
For showing left continuity of $r\mapsto \theta(A,r)$, 
fixing $\eta<0, M>0$ using stationary and non-negativity of the correlation function along with Slepian's Lemma gives
$$\P(\sup_{t\in [0,T]}Z(t)<r+\eta)\ge \P(\sup_{t\in [0,M]}Z(t)<r+\eta)^{\lceil \frac{T}{M}\rceil},$$
which on taking $\log$, dividing by $T$ and letting $T\to\infty$ gives
$$-\theta(A,r+\eta)\ge \frac{1}{M}\log \P(\sup_{t\in [0,M]}Z(t)<r+\eta).$$
On letting $\eta\to 0$ this gives
$$\limsup_{\eta\uparrow 0}\theta(A,r+\eta)\le -\frac{1}{M}\log \P(\sup_{t\in [0,M]}Z(t)<r),$$
where we use the fact that the distribution function of $\sup_{t\in [0,M[}Z(t)$ is continuous.
Letting $M\to\infty$ then gives
$$\limsup_{\eta\uparrow0}\theta(A,r+\eta)\le \theta(A,r),$$
and so we have verified left continuity.
\\
\\
We now proceed to verify right continuity, which is the main contribution of this theorem. To this effect, recall from \eqref{eqn:20-02-20plus} in the proof of part (b) of Lemma \ref{lem:exist}  that given $Y_T=\int_0^TZ(t)dt=y$, the conditional distribution of $\{Z(t),t\ge 0\}$ is a Gaussian process with mean $$m(t)y=\frac{\int_0^TA(t-u)du}{\sigma_T^2}y,$$
and covariance
{\footnotesize \begin{align*}
C(t_1,t_2):=&A(t_1-t_2)-\frac{\int_0^T\int_0^T A(t_1-s_1)A(t_2-s_2)ds_1ds_2}{\sigma_T^2}, \quad \sigma_T^2:=\Var(Y_T)=2\int_0^T (T-s)A(s)ds.
\end{align*}}
Letting $\bar{Z}_{\bar{m}(t)}$ denote a Gaussian process on $[0,T]$ with mean function $\bar{m}(.)$ and covariance $C(.,.)$ fixing $K<\infty$ we have
\begin{align*}
\P(\sup_{t\in [0,T]}Z(t)<r+\eta,Y_T>-KT)=&\frac{1}{\sqrt{2\pi \sigma_T^2}}\int_{-KT}^{\infty} \P(\sup_{t\in [0,T]}\bar{Z}_{m(t)y}<r+\eta) e^{-\frac{y^2}{2\sigma_T^2}}dy\\
=&\frac{1}{\sqrt{2\pi \sigma_T^2}}\int_{-KT}^\infty \P(\sup_{t\in [0,T]}\bar{Z}_{m(t)y-\eta}<r) e^{-\frac{y^2}{2\sigma_T^2}}dy.
\end{align*}
Since $$m(t)=\frac{\int_0^T A(t-u)du}{2\int_0^T (T-u)A(u)du}\le \frac{2\int_0^\infty A(t)dt}{T\int_0^{T/2}A(t)dt},$$  there exists $C>0$ such that for all $T$ large enough we have $\sup_{t\in [0,T]}m(t)\le \frac{C}{T}$, and consequently 
 $$ \P(\sup_{t\in [0,T]}\bar{Z}_{m(t)y-\eta}<r)\le \P(\sup_{t\in [0,T]}\bar{Z}_{m(t)(y-\frac{\eta T}{C})}<r),$$ which gives
\begin{align*}
\P(\sup_{t\in [0,T]}Z(t)<r+\eta, Y_T>-KT))\le \frac{1}{\sqrt{2\pi \sigma_T^2}}\int_{-KT}^\infty \P(\sup_{t\in [0,T]}\bar{Z}_{m(t)(y-\frac{\eta T}{C})}<r) e^{-\frac{y^2}{2\sigma_T^2}}dy.
\end{align*}
Changing variables to $y'=y-\frac{\eta T}{C}$ we get
\begin{align*}
\P(\sup_{t\in [0,T]}Z(t)<r+\eta,Y_T>-KT)\le &\frac{1}{\sqrt{2\pi \sigma_T^2}}\int_{-KT-\frac{\eta T}{C}}^\infty \P(\sup_{t\in [0,T]}\bar{Z}_{m(t)y'}<r) e^{-\frac{(y'+\frac{\eta T}{C})^2}{2\sigma_T^2}}dy'\\
\le &e^{\frac{\eta T^2}{C\sigma_T^2}(K+\frac{\eta }{C})}\frac{1}{\sqrt{2\pi \sigma_T^2}}\int_{-\infty}^\infty \P(\sup_{t\in [0,T]}\bar{Z}_{m(t)y'}<r)e^{-\frac{y'^2}{2\sigma_T^2}}dy'\\
=&e^{\frac{\eta T^2}{C\sigma_T^2}(K+\frac{\eta }{C})}\P(\sup_{t\in [0,T]}Z(t)<r).
\end{align*}
This gives
\begin{align*}
\P(\sup_{t\in [0,T]}Z(t)<r+\eta)\le &\P(\sup_{t\in [0,T]}Z(t)<r+\eta, Y_T>-KT)+\P(Y_T<-KT)\\
\le& e^{\frac{\eta T^2}{C\sigma_T^2}(K+\frac{\eta }{C})}\P(\sup_{t\in [0,T]}Z(t)<r)-\P(Y_T<-KT),
\end{align*}
or equivalently,
\begin{align}\label{eq:rev}
\P(\sup_{t\in [0,T]}Z(t)<r+\eta)+\P(Y_T<-KT) 
\le& e^{\frac{\eta T^2}{C\sigma_T^2}(K+\frac{\eta }{C})}\P(\sup_{t\in [0,T]}Z(t)<r).
\end{align}
An application of monotone convergence theorem gives $$\lim_{T\to\infty}\frac{\sigma_T^2}{T}=2\int_0^T \Big(1-\frac{s}{T}\Big)A(s)ds\to2 \int_0^\infty A(s)ds=: \lambda>0.$$ Thus, on taking $\log$, dividing by $T$, and letting $T\to\infty$ in \eqref{eq:rev} gives
$$\max\Big\{-\theta(A,r+\eta),-\frac{K^2}{2\lambda}\Big\}\le \frac{\eta}{C\lambda}\Big(K+\frac{\eta}{c}\Big)-\theta(A,r),$$
which on letting $\eta\downarrow 0$ followed by $K\to\infty$ gives 
$$\liminf_{\eta\downarrow 0}\theta(A,r+\eta)\ge \theta(A,r),$$
which verifies right continuity, and hence completes the proof of the theorem.
\end{proof}}

\begin{proof}[ of Lemma \ref{lem:op}] {\it Step 1: Interpolating between $Y_n(.)$ and $Z_n(.)$.} Given a measurable set $D\subseteq \R$ of positive Lebesgue measure, define 
$$
g_n(\gamma,D):=\log \int_{D^n} \text{exp}\left\{-\frac{1}{2}{\bf x}'\Big((1-\gamma)A_n^{-1}+\gamma B_n^{-1}\Big){\bf x}\right\}d{\bf x},\qquad \gamma\in[0,1],
$$ and note that
\begin{align*}
\partial_\gamma g_n(\gamma,D)=&\frac{1}{2} \E\Big(X_{n,\gamma}'(A_n^{-1}-B_n^{-1})X_{n,\gamma}\Big|X_{n,\gamma}(i)\in D, 1\le i\le n\Big),
\end{align*}
where $\{X_{n,\gamma}(i)\}_{1\le i\le n}$ is a centered Gaussian vector with inverse covariance matrix $\Sigma_{n,\gamma}^{-1}:=(1-\gamma)A_n^{-1}+\gamma B_n^{-1}$. Note that $\Sigma_{n,\gamma}$ is positive definite (because all eigenvalues are bounded away from zero, by \eqref{eq:compact}) and symmetric, so that indeed it is a proper covariance matrix of a Gaussian vector. By construction, we have $X_{n,0}(.)\stackrel{d}{=}Z_n(.)$ and $X_{n,1}(.)\stackrel{d}{=}Y_n(.)$, and so with $D_r:=(-\infty,r)$ we have
\begin{eqnarray}
\notag&&|\log \P(\max_{1\le i\le n}Y_n(i)<r)-\log \P(\max_{1\le i\le n}Z_n(i)<r)|\\
\notag&=&|g_n(1,D_r)-g_n(1,D_\infty)-g_n(0,D_r)+g_n(0,D_\infty)|\\
\notag&\le& \sup_{\gamma\in [0,1]}|\partial_\gamma g_n(\gamma,D_r)|+\sup_{\gamma\in [0,1]}|\partial_\gamma g_n(\gamma,D_\infty)|\\
\notag&\le& \frac{1}{2}||A_n^{-1}-B_n^{-1}||_2 \sup_{\gamma\in [0,1]}\E(\sum_{i=1}^nX_{n,\gamma}(i)^2|X_{n,\gamma}(i)\in D_r,1\le i\le n)\\
\label{eq:bound_nov0}&&+   \frac{1}{2}||A_n^{-1}-B_n^{-1}||_2   \sup_{\gamma\in [0,1]}\E(\sum_{i=1}^nX_{n,\gamma}(i)^2|X_{n,\gamma}(i)\in D_\infty,1\le i\le n).
\end{eqnarray}
We now claim that for any $D$ we have
\begin{align}\label{eq:bound_nov}
\limsup_{n\to\infty}\sup_{\gamma\in [0,1]}\frac{1}{n}\E (\sum_{i=1}^nX_{n,\gamma}(i)^2|X_{n,\gamma}(i)\in D,1\le i\le n)<\infty.
\end{align}
Deferring the proof of \eqref{eq:bound_nov}, use \eqref{eq:compact} along with $||A_n-B_n||_2\to0$ to conclude that $||A_n^{-1}-B_n^{-1}||_2\to 0$ (use that $A_n^{-1}-B_n^{-1}=A_n^{-1}(B_n-A_n)B_n^{-1}$). This observation along with \eqref{eq:bound_nov0} and \eqref{eq:bound_nov} gives the desired conclusion.

{\it Step 2:  We show \eqref{eq:bound_nov}.} First, we use \eqref{eq:compact} to get the existence of positive reals $\lambda_1,\lambda_2$ free of $n$ such that all eigenvalues of $A_n^{-1}$ and $B_n^{-1}$ lie within the interval $[\lambda_1,\lambda_2]$. In particular this implies that the eigenvalues of $\Sigma_{n,\gamma}:=(1-\gamma)A_n^{-1}+\gamma B_n^{-1}$ also lie in the same interval, and so fixing $K>0$ we have
\begin{align*}
\notag&\E \Big(\sum_{i=1}^nX_{n,\gamma}(i)^2|X_{n,\gamma}(i)\in D,1\le i\le n\Big)\\
\notag\le &Kn+\E \Big(\sum_{i=1}^nX_{n,\gamma}(i)^2{\bf 1}\Big\{\sum_{i=1}^nX_{n,\gamma}(i)^2>Kn\Big\}\Big|X_{n,\gamma}(i)\in D,1\le i\le n\Big)\\
\notag\le &Kn+\frac{\E \Big(\sum_{i=1}^nX_{n,\gamma}(i)^2 {\bf 1}\Big\{\sum_{i=1}^nX_{n,\gamma}(i)^2>Kn\Big\}\Big)}{\P\Big(X_{n,\gamma}(i)\in D,1\le i\le n\Big)}\\
\le &Kn+\frac{\sqrt{\E\Big[ \Big(\sum_{i=1}^nX_{n,\gamma}(i)^2\Big)^2\Big]}\sqrt{\P\Big(\sum_{i=1}^nX_{n,\gamma}(i)^2>Kn\Big)}}{\P\Big(X_{n,\gamma}(i)\in D,1\le i\le n\Big)},
\end{align*}
and so it suffices to show that
\begin{align}\label{eq:nov2}
\sup_{\gamma\in [0,1]} \frac{\sqrt{\E\Big[ \Big(\sum_{i=1}^nX_{n,\gamma}(i)^2\Big)^2\Big]}\sqrt{\P\Big(\sum_{i=1}^nX_{n,\gamma}(i)^2>Kn\Big)}}{\P\Big(X_{n,\gamma}(i)\in D,1\le i\le n\Big)}\leq e^{-C(K) n},
\end{align}
with $C(K)>0$ for $K$ large enough.

We will now bound each of the terms in the left hand side of \eqref{eq:nov2}. To begin, note that 
\begin{eqnarray}
\P\Big(\sum_{i=1}^nX_{n,\gamma}(i)^2>Kn\Big)&=&\frac{\int_{\{\sum_{i=1}^nx_i^2>Kn\}} e^{-\frac{1}{2}{\bf x}'\Sigma_{n,\gamma}^{-1}{\bf x}}d{\bf x}}{\int_{\R^n} e^{-\frac{1}{2}{\bf x}'\Sigma_{n,\gamma}^{-1}{\bf x}}d{\bf x}}
\notag\\
&\le &\frac{\int_{\{\sum_{i=1}^nx_i^2>Kn\}} e^{-\frac{\lambda_1}{2}{\bf x}'{\bf x}}d{\bf x}}{\int_{\R^n} e^{-\frac{\lambda_2}{2}{\bf x}'{\bf x}}d{\bf x}}
\notag\\
&=&\Big(\frac{\lambda_2}{\lambda_1}\Big)^{\frac{n}{2}}\P(\sum_{i=1}^n \xi_i^2>\lambda_1 Kn)
\notag
\\
&\le & \Big(\frac{\lambda_2}{\lambda_1}\Big)^{\frac{n}{2}} \Big( \E[ e^{\xi^2/4}]\Big)^n e^{-\lambda_1 K n / 4}
\notag
\\
&\le & e^{-\tilde C(K) n},\label{eq:K}
\end{eqnarray}
where the $(\xi_i)$ are i.i.d.\ $\NN(0,1)$ and where $\tilde C(K)$ can be made arbitrarily large by making $K$ large. A similar calculation gives
\begin{eqnarray}
\P\Big(X_{n,\gamma}(i)\in D,1\le i\le n\Big)&=&\frac{\int_{D^n} e^{-\frac{1}{2}{\bf x}'\Sigma_{n,\gamma}^{-1}{\bf x}}d{\bf x}}{\int_{\R^n} e^{-\frac{1}{2}{\bf x}'\Sigma_{n,\gamma}^{-1}{\bf x}}d{\bf x}}
\notag
\\
&\ge &\frac{\int_{D^n} e^{-\frac{\lambda_2}{2}{\bf x}'{\bf x}}d{\bf x}}{\int_{\R^n} e^{-\frac{\lambda_1}{2}{\bf x}'{\bf x}}d{\bf x}}
\notag
\\
&=&\Big(\frac{\lambda_1}{\lambda_2}\Big)^{\frac{n}{2}}\P(\mathcal{N}(0,1)\in \sqrt{\lambda_2} D)^n, \label{eq:D}
\end{eqnarray}
which is at most an exponentially decreasing factor. 
Finally we have using the Cauchy-Schwartz inequality
\begin{align}\label{eq:E}
\E \Big[\Big(\sum_{i=1}^nX_{n,\gamma}(i)^2\Big)^2\Big]\le n \sum_{i=1}^n \E X_{n,\gamma}(i)^4\le \frac{3n^2}{\lambda_1^2},
\end{align}
which increases polynomially in $n$. Combining \eqref{eq:K}, \eqref{eq:D}, \eqref{eq:E} and making $K$ sufficiently large to compensate the other exponential factors gives \eqref{eq:nov2}, and hence completes the proof of the lemma.
\end{proof}

\begin{proof}[ of Lemma \ref{lem:DM2d}] To begin, use the uniform convergence of correlation functions to conclude that for any positive integer $L$ we have
\begin{align}\label{eq:unif_2}
\xi_{k,L}:=\sup_{s\in \N_0}\sup_{(x_1,\ldots,x_L)\in \R^L}\Big|\P(Z_k(s+i)<x_i,i \le L)-\P(Z_\infty(i)<x_i, i \le L)\big|\stackrel{k\to\infty}{\to}0.
\end{align}

{\it Step 1: Lower bound.}  Fix a positive integer $M$ and use Slepian's lemma and non-negative correlations gives
\begin{align*}
\P(\max_{1\le i\le n}Z_k(i)<r)\ge &\prod_{a=1}^{\lceil \frac{n}{M}\rceil}\P(\max_{1\le i\le M}Z_k((a-1)M+i)<r)\\
\ge &\Big[\P(\max_{1\le i\le M} Z_\infty(i)< r)-\xi_{k,M}\Big]^{\lceil \frac{n}{M}\rceil}, 
\end{align*}
where the last inequality uses \eqref{eq:unif_2} with $L=M$. Taking $\log$, dividing by $n$, and letting $n\to\infty$ on both sides of the above equation gives 
\begin{equation}
\label{eqn:20200228star}
\liminf_{k,n\to\infty}\frac{1}{n}\log \P(\sup_{1\le i\le n}Z_k(i)<r) \ge  \frac{1}{M}\log \P(\sup_{1\le i\le M}Z_\infty(i)<r).
\end{equation}
 The lower bound of \eqref{eq:conclusion_d} follows from this on letting $M\to\infty$. 

{\it Step 2: Introducing a perturbation.}
For the upper bound of \eqref{eq:conclusion_d}, 
let $W_1,\ldots,W_n$ be i.i.d.\ $\mathcal{N}(0,1)$, independent of $\{Z_k(t)\}_{t\ge 0}$. Then for any $h>0$ we have, by the independence of the $\{W_i\}$ and $\{Z_k(t)\}$,
\begin{align*}
\P(\max_{1\le i\le n}Z_k(i)<r)\
& \le \frac{\P(\max_{1\le i\le n}\{Z_k(i)+hW_i\}<r+\sqrt{h})}{\P(W_1 < h^{-1/2})^n},
\end{align*}
which on taking $\log$, dividing by $n$, and letting $n,k\to\infty$ gives
\begin{eqnarray}\label{eq:upper_1}
\notag\limsup_{k,n\to\infty}\frac{1}{n}\log \P(\sup_{1\le i\le n}Z_k(i)<r)&\le& \limsup_{k,n\to\infty}\frac{1}{n}\log \P(\max_{1\le i\le n}\{Z_k(i)+hW_i\}<r+\sqrt{h})\\
&&-\log \P(W_1 < h^{-1/2}).
\end{eqnarray}

{\it Step 3: Introducing blocks.}
Fix positive integers $M,m$ with $M>m$, and let $p_i:=i(m+M)$ for $i\ge 1$. Let $N:=\lfloor \frac{n}{M+m}\rfloor$ denote the largest integer $i$ such that $p_i\le n$. Set $I_a:=[p_a-M,p_a]$ for $1\le a\le N$, and note the trivial inequality:
 \begin{align}\label{eq:trivial}
 \P(\max_{1\le i\le n}\{Z_k(i)+hW(i)\}<r+\sqrt{h})\le \P(\max_{1\le a\le N}\max_{i\in  I_a}\{Z_k(i)+hW(i)\}<r+\sqrt{h}).
 \end{align}
Let  $\{Y_k(i)\}_{i\in I_a, 1\le a\le N}$ be a centered Gaussian vector such that the collection of vectors $(\{Y_k(i)\}_{i\in I_a}, 1\le a\le N)$ are mutually independent, with
$$\{Y_k(i)\}_{i\in I_a}\stackrel{d}{=}\{Z_k(i)\}_{i\in I_a}.$$ Denoting $A_{n,k}$ and $B_{n,k,M,m}$ to be the covariance matrices of $\{Z_k(i)+hW(i)\}_{i\in I_a, 1\le a\le N}$ and $\{Y_k(i)+h W(i)\}_{i\in  I_a, 1\le a\le N}$, respectively, we have
\begin{align*}
\max_{i\in \bigcup_{a=1}^N I_a}\sum_{b=1}^N\sum_{j\in  I_b}|A_{n,k}(i,j)-B_{n,k,M,m}(i,j)| 
&
=\max_{1\le a\le N}\max_{i\in I_a}\sum_{b\ne a}^N\sum_{j\in  I_b }A_k(i,j)
\\
&\le \max_{1\le i\le n} \sum_{j:|j-i|\ge m}A_k(i,j)
\\
&\le 2\sum_{i=m }^\infty g_k(i)=:\varepsilon_{m,k},
\end{align*}
where $\lim_{m\to\infty}\lim_{k\to\infty}\varepsilon_{m,k}=0$ using \eqref{eq:DM1d}. Invoking the Gershgorin circle theorem (\cite[Thm 6.1.1]{HJ}), this gives
\begin{align}\label{eq:op1}
\limsup_{m\to\infty}\limsup_{M\to\infty}\limsup_{k,n\to\infty}||A_{n,k}-B_{n,k,M,m}||_2=0.
\end{align}
Also note that
\begin{align*}
\limsup_{k,n\rightarrow\infty}\max_{1\le i\le n}\sum_{j=1}^nA_{n,k}(i,j)\le2 \limsup_{k\rightarrow\infty}\sum_{\tau=0}^\infty g_k(\tau),
\end{align*}
which is finite using
\eqref{eq:DM1d}, and consequently we have
\begin{align}\label{eq:op2}
\limsup_{k,n\to\infty}||A_{n,k}||_2+\limsup_{k,n\to\infty}||B_{n,k,M,m}||_2<\infty.
\end{align}
Given that the eigenvalues of $A_{n,k}$ and $B_{n,k,M,m}$ are bounded below by $h>0$, we see that \eqref{eq:compact} is satisfied. Therefore, with \eqref{eq:op1} and \eqref{eq:op2}, an application of Lemma \ref{lem:op} gives
\begin{align}\label{eq:upper2}
\notag\limsup_{m\to\infty}\limsup_{M\to\infty}\limsup_{k,n\to\infty}\frac{1}{n}&\Big|\log \P(\max_{1\le a\le N}\max_{i\in  I_a}\{Z_k(i)+hW(i)\}<r+\sqrt{h})\\
-&\log \P(\max_{1\le a\le N}\max_{i\in  I_a}\{Y_k(i)+hW(i)\}<r+\sqrt{h})\Big|=0.
\end{align}

{\it Step 4: Decoupling the blocks.}  By definition of $\{Y_k(.)\}$ we have
\begin{align*}
&\notag\P(\max_{1\le a\le N}\max_{i\in  I_a}\{Y_k(i)+hW(i)\}<r+\sqrt{h})\\
&=\prod_{a=1}^N\P(\max_{i\in  I_a}\{Z_k(i)+hW(i)\}<r+\sqrt{h})\\
\notag
&\le \Big[\P(\max_{1\le i\le M}\{Z_\infty(i)+hW(i)\}<r+\sqrt{h})+\xi_{k,M}\Big]^N,
\end{align*}
where the last inequality uses \eqref{eq:unif_2} with $L=M$. On taking $\log$, dividing by $n$, and letting $n,k\to\infty$ on both sides of the above equation we get
\begin{align}\label{eq:upper3}
\notag\limsup_{k,n\to\infty}&\frac{1}{n}\log \P(\max_{1\le a\le N}\max_{i\in  I_a}\{Y_k(i)+hW(i)\}<r+\sqrt{h})\\
\le& \frac{1}{M+m}\log \P(\max_{1\le i\le M }\{Z_\infty(i)+hW(i)\}<r+\sqrt{h}).
\end{align}
{\it Step 5: Undoing the perturbation.}
For analyzing the right hand side of \eqref{eq:upper3}, set $S:=\{i\in \{1,\ldots,M\}:|W(i)|>h^{-1/2}\}$. Fix $0<t<1/2$. Then, denoting the set of all $p$ ordered tuples of distinct integers in $\{1,\ldots,M\}$ by $[M]_p$ we have
\begin{align}
\notag&\P(\max_{1\le i\le M}\{Z_\infty(i/\ell)+hW(i)\}<r+\sqrt{h})\\
\notag&\le \P(|S|>Mt)+\P(|S|\le Mt, \max_{1\le i\le M}\{Z_\infty(i)+hW(i)\}<r+\sqrt{h})\\
\notag&\le \P(|S|>Mt)+\sum_{p=\lceil M(1-t) \rceil}^M\sum_{(i_1,\cdots,i_p)\in [M]_p}\P(\max_{1\le j\le p}\notag Z_\infty(i_j)<r+2\sqrt{h})\\
\notag&\le \P(|S|>Mt)+\sum_{p=\lceil M(1-t) \rceil}^M\sum_{(i_1,\cdots,i_p)\in [M]_p}\frac{\P(\max_{1\le i\le M}Z_\infty(i)<r+2\sqrt{h})}{\P(Z_\infty(0)<r+2\sqrt{h})^{M-p}}\\
\label{eq:slepian_1}
&\le \P(|S|>Mt)+M{M\choose \lceil M(1-t) \rceil}\frac{\P(\max_{1\le i\le M}Z_\infty(i)<r+2\sqrt{h})}{\P(Z(0) < r+2\sqrt{h})^{Mt}},
\end{align}
where the inequality in the penultimate line uses Slepian's inequality along with non-negative correlations. We will now analyze both the terms in the right hand side of \eqref{eq:slepian_1} separately. 
Since $|S|\sim {\rm Bin}(M,\P(|W_1|>h^{-1/2}))$, for every $t>0$ we have
\begin{align}\label{eq:slepian_2}
\limsup_{h\to0}\limsup_{M\to\infty}\frac{1}{M}\log \P(|S|>Mt)=-\infty.
\end{align}
Also using the asymptotics of binomial coefficients gives $$\lim_{M\to\infty}\frac{\log {M\choose \lceil M(1-t)\rceil}}{M}=-t\log t-(1-t)\log (1-t),$$ so that
\begin{align}\label{eq:slepian_3}
\notag\limsup_{t\to0}&\limsup_{M\to\infty}\frac{1}{M}\log \left\{M{M\choose \lceil M(1-t) \rceil}\frac{\P(\max_{1\le i\le M}Z_\infty(i)<r+2\sqrt{h})}{\P(Z_\infty(0) < r+2\sqrt{h})^{Mt}}\right\}\\
\le&  \lim_{M\to\infty}\frac{1}{M}\log \P(\max_{1\le i\le M}Z_\infty(i)<r+2\sqrt{h}).
\end{align}
Combining \eqref{eq:upper_1}, \eqref{eq:trivial}, \eqref{eq:upper2}, \eqref{eq:upper3}, \eqref{eq:slepian_1}, \eqref{eq:slepian_2}, and \eqref{eq:slepian_3} and taking limits in the order $n,k\to\infty$, $M\to\infty$, $m\to\infty$, $h\to 0$ and then $t\to 0$ gives
$$
\limsup_{k,n\to\infty}\frac{1}{n}\log \P(\sup_{1\leq i\leq n}Z_k(i)<r)
\le \lim_{h\to 0}\lim_{M\to\infty}\frac{1}{M}\log \P(\max_{1\le i\le M}Z_\infty(i)<r+2\sqrt{h}),
$$
from which the upper bound follows using Theorem \ref{lem:ohad}.
\end{proof}

\begin{proof}[ of Theorem~\ref{lem:DM2}] Under the assumption \eqref{eq:DM3}, uniform convergence of correlation functions gives

\begin{align}\label{eq:unif_2c}
\xi_{k,L}:=\sup_{s\in \N_0}\sup_{x\in \R}\Big|\P(\sup_{\tau\in [0,L]}Z_k(s+\tau)<x)-\P(\sup_{\tau\in  [0,L]}Z_\infty(\tau)<x)|\stackrel{k\to\infty}{\to}0.
\end{align}
Using this, repeating arguments similar to the proof of the lower bound in the discrete case (cf.\ (\ref{eqn:20200228star})) gives 
\begin{equation}
\label{eqn:20200228doublestar}
\liminf_{k,T\to\infty}\frac{1}{T}\log \P(\sup_{t\in [0,T]}Z_k(t)<r) \ge \frac{1}{M}\log \P(\sup_{t\in [0,M]}Z_\infty(t)<r).
\end{equation}
The lower bound of \eqref{eq:conclusion} follows from this on letting $M\to\infty$.
\\

For the upper bound of \eqref{eq:conclusion}, assume without loss of generality that $T$ is an integer. Fixing a positive integer $\ell$ and setting $n=T\ell$ we have the trivial upper bound:
$$
\P(\sup_{t\in [0,T]}Z_k(t)<r)\le  \P(\max_{1\le i\le n}Z_k(i/\ell)<r),
$$
where the correlation of the discrete time Gaussian process $\{Z_k(i/\ell)\}_{i\ge 1}$ converge to that of $\{Z_\infty(i/\ell)\}_{i\ge 1}$. Also $\{Z_k(i/\ell)\}_{i\ge 1}$ satisfies \eqref{eq:DM1d} by assumption (\ref{eq:DM1}), and so an application of Lemma \ref{lem:DM2d} gives
\begin{align}\label{eq:verbatim_2020}
\limsup_{k,T\to\infty}\frac{1}{T}\log \P(\sup_{t\in [0,T]}Z_k(t)<r)\le \limsup_{T\rightarrow\infty}\frac{1}{T}\log \P(\max_{1\le i\le T \ell}Z_\infty(i/\ell)<r).
\end{align}
From this the upper bound of \eqref{eq:conclusion} follows on letting $\ell\to \infty$ and invoking \eqref{eq:cont_1}. This completes the proof of the lemma. 
\end{proof}

\begin{proof}[ of Lemma \ref{lem:DM22}]
{\bf Proof of (a).}  This follows on noting that the lower bound proof of Theorem~\ref{lem:DM2} only uses (\ref{eq:DM3}), cf.\ (\ref{eqn:20200228doublestar}).

{\bf Proof of (b).} This follows on noting that the upper bound proof of Theorem~\ref{lem:DM2} only uses \eqref{eq:DM1} up to the step \eqref{eq:verbatim_2020}, giving 
\begin{align*}
\limsup_{k,T\to\infty}\frac{1}{T}\log \P(\sup_{t\in [0,T]}Z_k(t)<r)&\le \limsup_{T\rightarrow\infty}\frac{1}{T}\log \P(\max_{1\le i\le T \ell}Z_\infty(i/\ell)<r)\\
&=\ell \log \P(\mathcal{N}(0,1)<r),
\end{align*}
where the last equality uses the fact that the limiting process is white noise. The desired conclusion then follows on letting $\ell\to\infty$.
\end{proof}

{\bf Acknowledgement.} We thank the American Institute of Mathematics for hosting the SQuaRE ``Persistence probabilities'', where this project was initiated. We also thank Mikhail Lifshits for suggesting the Sudakov-Fernique inequality for the proof of Prop.\ 2.2.


\section{Appendix}

\begin{proof}[ of Lemma \ref{lem:f}]
Let us start with (a). If $x\in [0,1]$ and $y\in [N,b]$, then we have $$\frac{|x-y|^{2H-2}}{y^{2H-2}}=\Big(1-\frac{x}{y}\Big)^{2H-2}\le \Big(1-\frac{1}{N}\Big)^{2H-2},$$ and so 

\begin{align*}
f_{p,H}(1,b)&\le f_{p,H}(1,N)+\Big(1-\frac{1}{N}\Big)^{2H-2}\int_0^1\int_{1}^b x^p y^{p+2H-2} dydx\\
&= f_{p,H}(1,N)+\Big(1-\frac{1}{N}\Big)^{2H-2} \frac{\psi_{p+2H-2}(b)}{p+1},
\end{align*}
 which verifies \eqref{eq:f1}.

For the lower bound, note that $|y-x|^{2H-2}\ge y^{2H-2}$ for $y\ge x$, and so
\begin{align*}
f_{p,H}(1,b)-f_{p,H}(1,1)\ge\int_0^1 \int_{1}^b x^p y^{p+2H-2}dydx \ge  \frac{\psi_{p+2H-2}(b)}{p+1},
\end{align*}
which verifies \eqref{eq:f2}.

Simply observe that (b) follows from using Selberg's integral formula (see e.g.\ \cite[(1.1)]{FW}) with $\alpha=p+1, \beta=1, \gamma=H-1$. 
\end{proof}

\begin{proof}[ of Lemma \ref{lem:limit}]
For the upper bound in \eqref{eq:limit}, for any $b\ge 1$ and $N\ge 1$, for all $u$ large we have $bu\ge u\ge N$, and so
\begin{eqnarray}\label{eq:upper_bound_2018}
\notag F_{\rho,\sigma}(u,bu)& =&\int_0^u\int_0^{bu}\sigma(\lceil x\rceil)\sigma(\lceil y\rceil)\rho(\lceil x\rceil-\lceil y\rceil)dxdy\\
\notag&\le& 2\int_0^N \int_0^{bu}\sigma(\lceil x\rceil)\sigma(\lceil y\rceil)\rho(\lceil x\rceil-\lceil y\rceil)dxdy\\
\notag&&+\int_0^u\int_0^{bu}\sigma(\lceil x\rceil)\sigma(\lceil y\rceil)\rho(\lceil x\rceil-\lceil y\rceil){\bf 1}\{|x-y|\le N\}dxdy\\
&&+\int_{N}^{u}\int_{N}^{bu}\sigma(\lceil x\rceil)\sigma(\lceil y\rceil)\rho(\lceil x\rceil-\lceil y\rceil){\bf 1}\{|x-y|>N\}dxdy.
\end{eqnarray}
We will now bound each of the terms on the right hand side of \eqref{eq:upper_bound_2018}. For the first term, we have
\begin{eqnarray}\label{eq:2018_1}
&&\notag\int_0^N\int_0^{bu}\sigma(\lceil x\rceil)\sigma(\lceil y\rceil)\rho(\lceil x\rceil-\lceil y\rceil)dxdy\\
&\lesssim_\rho&\notag\int_0^N\int_0^{2N}\sigma(\lceil x\rceil)\sigma(\lceil y\rceil)\rho(\lceil x\rceil-\lceil y\rceil)dxdy\\
&&+\notag\int_0^N\int_{2N}^{bu}\sigma(\lceil x\rceil)\sigma(\lceil y\rceil)|x-y|^{2H-2}dxdy\\
\notag&\lesssim_{N,H}&\int_0^N\int_0^{2N}\sigma(\lceil x\rceil)\sigma(\lceil y\rceil)\rho(\lceil x\rceil-\lceil y\rceil)dxdy\\
\notag&&+\int_0^N\int_{2N}^{bu}\sigma(\lceil x\rceil)\sigma(\lceil y\rceil)x^{2H-2}dxdy\\
&\lesssim_{N,\rho,\sigma}&\int_{2N}^{bu}x^{p+2H-2}dx
\le \psi_{p+2H-2}(bu).
\end{eqnarray}
For the second term we have the bound
\begin{align}\label{eq:2018_3}
&\notag\int_0^u\int_0^{bu}\sigma(\lceil x\rceil)\sigma(\lceil y\rceil)\rho(\lceil x\rceil-\lceil y\rceil){\bf 1}\{|x-y|\le N\}dxdy\\
\lesssim_{N,\sigma,\rho}& \int_{2N}^u \int_{y-N}^{y+N} x^p y^p d x dy \lesssim_N \int_N^u y^{2p} d y \le \psi_{2p}(u).
\end{align}
Finally, with $$\eta_N:=\sup_{x\ge N, y\ge N, |x-y|\ge N}\left|\frac{\sigma(\lceil x\rceil) \sigma(\lceil y\rceil) \rho(\lceil x\rceil-\lceil y\rceil)}{x^p y^p|x-y|^{2H-2}}-\kappa\right|,$$
the third term can be estimated as

\begin{eqnarray}\label{eq:2018_4}
\notag&&\int_N^u\int_N^{bu}\sigma(\lceil x\rceil)\sigma(\lceil y\rceil)\rho(\lceil x\rceil-\lceil y\rceil){\bf 1}\{|x-y|> N\}dxdy\\
\notag&\le & (\kappa+\eta_N)\int_{N}^{u}\int_N^{bu} x^p y^p |x-y|^{2H-2}{\bf 1}\{|x-y|>N\}dxdy\\
&\le & (\kappa+\eta_N) f_{p,H}(u,bu)
= (\kappa+\eta_N)u^{2p+2H} f_{p,H}(1,b).
\end{eqnarray}
Since $f_{p,H}(1,b)\gtrsim_{p,H} \psi_{p+2H-2}(b)$ for all $b\ge 1$ by \eqref{eq:f2}, using \eqref{eq:upper_bound_2018} along with \eqref{eq:2018_1},  \eqref{eq:2018_3} and \eqref{eq:2018_4} gives the existence of a finite constant $M(N)=M_{\rho,\sigma,H}(N)$  such that 
\begin{align}\label{eq:bound18} 
\frac{F_{\rho,\sigma}(u,bu)}{ u^{2p+2H}f_{p,H}(1,b)}\le M(N)\frac{\Big\{\psi_{p+2H-2}(bu)+\psi_{2p}(u)\Big\}}{u^{2p+2H}\psi_{p+2H-2}(b)}+(\kappa+\eta_N).
\end{align}
To bound the first term on the right hand side of \eqref{eq:bound18}, we note that 
$$
\psi_{p+2H-2}(bu)\lesssim_{p,H}
\begin{cases} \psi_{p+2H-2}(b)\psi_{p+2H-2}(u)&\text{ when }p+2H-2>-1,\\
\psi_{p+2H-2}(b)+\psi_{p+2H-2}(u)&\text{ when }p+2H-2\le -1.
\end{cases}
$$
Using this, along with the fact that $\max(\psi_{2p}(u),\psi_{p+2H-2}(u))=o(u^{2p+2H})$, as $u\to\infty$, and taking a sup over $b\ge 1$ and let $u\to\infty$ in \eqref{eq:bound18} gives
$$\limsup_{u\to\infty}\sup_{b\ge 1}\frac{F_{\rho,\sigma}(u,bu)}{ u^{2p+2H}f_{p,H}(1,b)}\le \kappa+\eta_N,$$ from which we have the following upper bound on letting $N\to\infty$:
\begin{align}\label{eq:upper_final}
\limsup_{u\to\infty}\sup_{b\ge 1}\frac{F_{\rho,\sigma}(u, bu)}{u^{2p+2H}  f_{p,H}(1,b)}\le \kappa.
\end{align}
For the lower bound, use a similar argument as in the derivation of \eqref{eq:bound18} to get 
\begin{eqnarray}\label{eq:lower_cov}
F_{\rho,\sigma}(u,bu)\notag&\ge  &(\kappa-{\eta}_N)
 \int_{N}^{u}\int_{N}^{bu} x^p y^p |x-y|^{2H-2}{\bf 1}\{|x-y|>N\}dx dy\\
\notag&\ge &(\kappa-{\eta}_N)\Big[f_{p,H}(u,bu)-2f_{p,H}(N,bu)\\
\notag&& -\int_{N}^u\int_{N}^{bu} x^p y^p |x-y|^{2H-2}{\bf 1}\{|x-y|\le N\}dxdy\Big]\\
& \ge& (\kappa-\eta_N)f_{p,H}(u,bu)-\widetilde{M}(N)\frac{\Big\{\psi_{p+2H-2}(bu)+\psi_{2p}(u)\Big\}}{u^{2p+2H}\psi_{p+2H-2}(b)},
\end{eqnarray}
which after dividing by $u^{2p+2H} f_{p,H}(1,b)$, taking an $\inf$ over $b\ge 1$ followed by $u\to\infty$ and $N\to\infty$ gives the lower bound
\begin{align}\label{eq:lower_final}
\liminf_{k\to\infty}\inf_{b\ge 1}\frac{F(u, bu)}{u^{2p+2H}f_{p,H}(1,b)}\ge  \kappa ,
\end{align}
Combining \eqref{eq:lower_final} with \eqref{eq:upper_final}  completes the proof of the lemma.

\end{proof}

\end{document}